\documentclass[10pt]{article}
\usepackage{amsmath,amssymb,amsthm,comment}
\usepackage{mathtools}
\usepackage{color}
\usepackage[nohead,margin=1.1in]{geometry}
\setlength\baselineskip{18pt}
\usepackage{graphicx,psfrag,epstopdf}
\usepackage{verbatim}
\usepackage{url}
\graphicspath{{./pics/}}

\allowdisplaybreaks

\theoremstyle{plain}
\newtheorem{theorem}{Theorem}[section]
\newtheorem{remark}{Remark}[section]

\newtheorem{lemma}{Lemma}[section]
\newtheorem{ass}{Assumption}[section]
\newtheorem{prop}{Proposition}[section]

\numberwithin{equation}{section}
\def\E{\mathbb{E}}
\def\d{\mathrm{d}}

\title{On the Regularizing Property of Stochastic Gradient Descent}

\author{Bangti Jin\thanks{Department of Computer Science, University College London, Gower Street, London WC1E 2BT, UK (\texttt{b.jin@ucl.ac.uk,bangti.jin@gmail.com})}
\and Xiliang Lu\thanks{School of Mathematics and Statistics and Hubei Key Laboratory of Computational Science, Wuhan University, Wuhan 430072, People's Republic of China (\texttt{xllv.math@whu.edu.cn})}}

\begin{document}
\maketitle

\begin{abstract}
Stochastic gradient descent (SGD) and its variants are among the most successful approaches for solving large-scale
optimization problems. At each iteration, SGD employs an unbiased estimator of the full gradient computed from
one single randomly selected data point. Hence, it scales well with problem size and is very attractive for handling
truly massive dataset, and holds significant potentials for solving large-scale inverse problems. In this work, we
rigorously establish its regularizing property under \textit{a priori} early stopping rule for linear
inverse problems, and also prove convergence rates under the canonical sourcewise condition.
This is achieved by combining tools from classical regularization theory and stochastic analysis. Further, we analyze its
preasymptotic weak and strong convergence behavior, in order to explain the fast initial convergence typically
observed in practice. The theoretical findings shed insights into the performance of the algorithm, and are
complemented with illustrative numerical experiments.\\
{\bf Keywords}: stochastic gradient descent; regularizing property; error estimates; preasymptotic convergence.
\end{abstract}

\section{Introduction}

In this paper, we consider the following finite-dimensional linear inverse problem:
\begin{equation}\label{eqn:lin}
  Ax = y^\dag,
\end{equation}
where $A\in\mathbb{R}^{n\times m}$ is a matrix representing the data formation mechanism,
$x\in\mathbb{R}^m$ is the unknown signal of interest, and $y^\dag\in\mathbb{R}^n$
is the exact data formed by $y^\dag= Ax^\dag$, with $x^\dag$ being the true solution.
In practice, we can only access the noisy data $y^\delta\in\mathbb{R}^n$ defined by
\begin{equation*}
  y^\delta = y^\dag + \xi,
\end{equation*}
where the vector $\xi\in \mathbb{R}^n$ is the noise in the data, with a noise level $\delta = \|\xi\|$ (and $\bar\delta
=n^{-\frac{1}{2}}\delta$). The noise $\xi$ is assumed to be a realization
of an independent identically distributed (i.i.d.) mean zero Gaussian random vector. Throughout, we denote the $i$th row of the matrix
$A$ by a column vector $a_i\in\mathbb{R}^m$, and the $i$th entry of the vector $y^\delta$ by $y_i^\delta$. The model \eqref{eqn:lin}
is representative of many discrete linear inverse problems, including linearized (sub)problems
of nonlinear inverse problems. Hence, the stable and efficient numerical solution of the model
\eqref{eqn:lin} has been the topic of many research works, and plays an important role in developing practical inversion techniques
(see, e.g., \cite{EnglHankeNeubauer:1996,ItoJin:2015}).

Stochastic gradient descent (SGD), dated at least back to Robbins and Monro \cite{RobbinsMonro:1951}, represents an
extremely popular solver for large-scale least square type problems and statistical inference, and its accelerated
variants represent state-of-the-art solvers for training (deep) neural networks \cite{Bottou:2010,JohnsonZhang:2013,
BottouCurtisNocedal:2018}. Such methods hold significant potentials for solving large-scale inverse problems. For
example, the randomized Kaczmarz method \cite{StrohmerVershynin:2009}, which has long been very popular and successful
in computed tomography \cite{Natterer:2001}, can be viewed as SGD with weighted sampling (see, e.g.,
\cite{NeedellSrebroWard:2016} and \cite[Prop. 4.1]{JiaoJinLu:2017}). Besides the randomized Kaczmarz method, there are also
several experimental evaluations on SGD for inverse problems \cite{JiaLam:2010,ChenLiLiu:2018}. Hence, it is extremely
important to understand theoretical properties of such stochastic reconstruction methods, which, to the best of our
knowledge, have not been addressed in the context of ill-posed inverse problems.

In this work, we contribute to the theoretical analysis of SGD for inverse problems. Consider the following basic
version of SGD: given an initial guess $x_1\in\mathbb{R}^m$, update the iterate $x_{k+1}^\delta$ by
\begin{equation}\label{eqn:sgd}
  x_{k+1}^\delta = x_k^\delta - \eta_k ((a_{i_k},x_k^\delta)-y_{i_k}^\delta)a_{i_k},\quad k=1,\ldots
\end{equation}
where the index $i_k$ is drawn i.i.d. uniformly from the set $\{1,\ldots,n\}$, $\eta_k>0$ is the step size at the
$k$th iteration, and $(\cdot,\cdot)$ denotes the Euclidean inner product on $\mathbb{R}^m$. The update \eqref{eqn:sgd}
can be derived by computing an unbiased gradient estimate $((a_{i_k},x)-y_{i_k}^\delta)a_{i_k}$ from the functional
$\frac{1}{2}(y_{i_k}^\delta-(a_{i_k},x))^2$ for a randomly sampled single datum $\{a_{i_k},y_{i_k}^\delta\}$, instead of
the gradient $n^{-1}A^t(Ax-y^\delta)$ of the functional $\frac{1}{2n}\sum_{i=1}^n(y_i^\delta-(a_i,x))^2$ for the full data.
Thus, the SGD iteration \eqref{eqn:sgd} is a randomized version of the classical Landweber iteration:
\begin{equation}\label{eqn:Landweber}
  x_{k+1}^\delta = x_k^\delta - \eta_k n^{-1}A^t(Ax_k^\delta-y^\delta).
\end{equation}

In comparison with Landweber iteration \eqref{eqn:Landweber}, SGD requires only evaluating one datum
$\{a_{i_k},y_{i_k}^\delta\}$ per iteration, and thus the per-iteration cost is drastically reduced,
which is especially attractive for large-scale problems. In theory, Landweber method is known to be
regularizing \cite[Chapter 6]{EnglHankeNeubauer:1996}. However, the regularizing property of SGD
remains to be established, even though it was conjectured and empirically examined (see, e.g.,
\cite{Keskar:2017,Jastrzbski:2017,ZhuWuYu:2018}). Numerically, one observes a semiconvergence phenomenon
for SGD: the iterate $x_k^\delta$ first converges to the true solution $x^\dag$, and then diverges as the
iteration further proceeds. Semiconvergence is characteristic of (deterministic) iterative regularization
methods, and early stopping is often employed \cite{EnglHankeNeubauer:1996,KaltenbacherNeubauerScherzer:2008}.
Below we describe the main theoretical contributions of this work, which are
complemented with numerical experiments in Section \ref{sec:numer}.

The first contribution is to analyze SGD with a polynomially decaying sequence of step sizes (see Assumption
\ref{ass:eta}) through the lens of regularization theory. In Theorems \ref{thm:consistency} and \ref{thm:err-total},
we prove that SGD is regularizing in the sense that iterate $x_k^\delta$ converges to the exact solution $x^\dag$
in the mean squared norm as the noise level $\delta$ tends to zero, under \textit{a priori} early stopping rule,
and also $x_k^\delta$ converges to $x^\dag$ at certain rates under canonical source condition. To the best of our
knowledge, this  is the first result on regularizing property of a stochastic iteration method. The analysis relies
on decomposing the error into three components: approximation error due to early stopping, propagation error due to
the presence of data noise, and stochastic error due to the random index $i_k$. The first two parts are deterministic
and can be analyzed in a manner similar to Landweber method \cite[Chapter 6]{EnglHankeNeubauer:1996}; see Theorem
\ref{thm:err-approx} and \ref{thm:err-prop}. The last part on the variance of the iterate constitutes the
main technical challenge in the analysis. It is overcome by relating the iterate variance to the
expected square residuals and analyzing the evolution of the latter; see Theorems \ref{thm:err-var1}
and \ref{thm:res-noisy}.

The second contribution is to analyze the preasymptotic convergence in both weak and strong sense.
In practice, it is often observed that SGD can decrease the error very fast during initial iterations.
We provide one explanation of the phenomenon by means of preasymptotic convergence, which extends the
recent work on the randomized Kaczmarz method \cite{JiaoJinLu:2017}. It is achieved by dividing the
error into low- and high-frequency components according to right singular vectors, and studying their
dynamics separately. In Theorems \ref{thm:asympt-weak-noisy} and \ref{thm:asympt-noisy}, we prove that the
low-frequency error can decay much faster than the high-frequency one in either weak or strong norm.
In particular, if the initial error is dominated by the low-frequency components, then SGD decreases
the total error very effectively during the first iterations. The analysis sheds important insights into
practical performance of SGD. Further, under the canonical source type condition, the low-frequency
error is indeed dominating, cf. Proposition \ref{prop:source}.

Now we situate this work in the existing literature in two related areas: inverse problems with random noise,
and machine learning. Inverse problems with random noise have attracted much attention over the last decade.
In a series of works, Hohage and his collaborators \cite{BauerHohageMunk:2009,BissantzHohageMunk:2004,
BissantzHohageMunk:2007} studied various regularization methods, e.g., Tikhonov and iterative regularization,
for solving linear and nonlinear inverse problems with random noise. For example, Bissantz et al
\cite{BissantzHohageMunk:2004} analyzed Tikhonov regularization for nonlinear inverse problems, and analyzed
consistency and convergence rate. In these works, randomness enters into the problem formulation via the data
$y^\delta$ directly as a Hilbert space valued process, which is fixed (though random) when applying
regularization techniques. Thus, it differs greatly from SGD, for which randomness arises due to the random
row index $i_k$ and changes at each iteration. Handling the iteration noise requires different techniques
than that in these works.

There are also several relevant works in the context of machine learning \cite{YingPontil:2008,TarresYao:2014,LinRosasco:2017,
DieuleveutBach:2016}. Ying and Pontil \cite{YingPontil:2008} studied an online least-squares gradient
descent algorithm in a reproducing kernel Hilbert space (RKHS), and presented a novel capacity independent
approach to derive bounds on the generalization error. Tarres and Yao \cite{TarresYao:2014} analyzed the
convergence of a (regularized) online learning algorithm closely related to SGD. Lin and Rosasco
\cite{LinRosasco:2017} analyzed the influence of batch size on the convergence of mini-batch SGD. See
also the recent work \cite{DieuleveutBach:2016} on SGD with averaging for nonparametric regression in RKHS.
All these  works analyze the method in the framework of statistical learning, where the noise arises mainly
due to finite sampling of the (unknown) underlying data distribution, whereas for inverse problems, the
noise arises from imperfect data acquisition process and enters into the data $y^\delta$ directly.
Further, the main focus of these works is to bound the generalization error, instead of error estimates
for the iterate. Nonetheless, our proof strategy in decomposing the total error into three different
components shares similarity with these works.

The rest of the paper is organized as follows. In Section \ref{sec:main}, we present and discuss the main results,
i.e., regularizing property and preasymptotic convergence. In Section \ref{sec:basicest}, we derive bounds on
three parts of the total error. Then in Section \ref{sec:reg}, we analyze the regularizing property of SGD with
\textit{a priori} stopping rule, and prove convergence rates under classical source condition. In Section
\ref{app:preasympt}, we discuss the preasymptotic convergence of SGD.  Some numerical results are given in
Section \ref{sec:numer}. In an appendix, we collect some useful inequalities. We conclude this section with
some notation. We use the superscript $\delta$ in $x_k^\delta$ to indicate SGD iterates for noisy data
$y^\delta$, and denote by $x_{k}$ that for the exact data $y^\dag$. The notation $\|\cdot\|$
denotes Euclidean norm for vectors and spectral norm for matrices, and $[\cdot]$ denotes the integral part of
a real number. $\{\mathcal{F}_k\}_{k\geq 1}$ denotes a sequence of increasing $\sigma$-fields generated by the
random index $i_k$ up to the $k$th iteration.  The notation $c$, with or without subscript, denotes a generic
constant that is always independent of the iteration index $k$ and the noise level $\delta$.

\section{Main results and discussions}\label{sec:main}

In this part, we present the main results of the work, i.e., regularizing property of SGD and
preasymptotic convergence results. The detailed proofs are deferred to Sections \ref{sec:reg}
and \ref{app:preasympt}, which in turn rely on technical estimates derived in Section
\ref{sec:basicest}. Throughout, we consider the following step size schedule, which
is commonly employed for SGD.
\begin{ass}\label{ass:eta}
The step size $\eta_j=c_0j^{-\alpha}$, $j=1,2,\ldots,$ $\alpha\in(0,1)$, with $c_0\max_i\|a_i\|^2\leq 1$.
\end{ass}

Due to stochasticity of the row index $i_k$, the iterate $x_k^\delta$ is random. We measure the approximation
error $x_k^\delta-x^\dag$ to the true solution $x^\dag$ by the mean squared error $\mathbb{E}[\|x_k^\delta -x ^\dag\|^2]$,
where the expectation $\E[\cdot]$ is with respect to the random index $i_k$. The reference solution $x^\dag$ is taken
to be the unique minimum norm solution (relative to the initial guess $x_1$):
\begin{equation}\label{eqn:sol-min}
  x^\dag = \arg\min_{x\in\mathbb{R}^m}\left\{\|x-x_1\|\quad \mbox{s.t.}\quad  Ax^\dag = y^\dag\right\}.
\end{equation}

Now we can state the regularizing property of SGD \eqref{eqn:sgd} under \textit{a priori} stopping rule: the error
$\mathbb{E}[\|x_{k(\delta)}^\delta -x ^\dag\|^2]$ tends to zero as the noise level $\delta\to0$, if the stopping
index $k(\delta)$ is chosen properly in relation to the noise level $\delta$. Thus, SGD equipped with suitable
\textit{a priori} stopping rule is a regularization method. Note that condition \eqref{eqn:cond-k} is analogous to
that for classical regularization methods.
\begin{theorem}\label{thm:consistency}
Let Assumption \ref{ass:eta} be fulfilled. If the stopping index $k(\delta)$ satisfies
\begin{equation}\label{eqn:cond-k}
  \lim_{\delta\to 0^+}k(\delta) = \infty \quad \mbox{and}\quad \lim_{\delta\to0^+} k(\delta)^{\frac{\alpha-1}{2}} \delta = 0,
\end{equation}
 then the iterate $x_{k(\delta)}^\delta$ satisfies
\begin{equation*}
  \lim_{\delta\to 0^+} \E[\|x_{k(\delta)}^\delta - x^\dag\|^2] = 0.
\end{equation*}
\end{theorem}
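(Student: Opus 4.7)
The plan is to establish the result by decomposing the mean squared error into the three pieces identified in the introduction---approximation, propagation, and stochastic variance---and then invoking the three bounds already stated in Section~\ref{sec:basicest}. To set this up, introduce the auxiliary iterate $x_k$ obtained by running SGD with the same random index sequence $\{i_j\}$ but on the exact data $y^\dag$. Taking expectation in the update \eqref{eqn:sgd} and using the i.i.d.\ uniform sampling of $i_k$, one sees that both $\mathbb{E}[x_k^\delta]$ and $\mathbb{E}[x_k]$ satisfy a deterministic Landweber-type recursion with effective step size $\eta_k/n$, driven by $y^\delta$ and $y^\dag$ respectively. Combining the bias-variance identity with the triangle inequality on the bias then yields
\begin{equation*}
\mathbb{E}[\|x_k^\delta - x^\dag\|^2] \le 2\,\|\mathbb{E}[x_k] - x^\dag\|^2 + 2\,\|\mathbb{E}[x_k^\delta] - \mathbb{E}[x_k]\|^2 + \mathbb{E}[\|x_k^\delta - \mathbb{E}[x_k^\delta]\|^2],
\end{equation*}
so the task reduces to showing that each of the three summands vanishes as $\delta\to 0^+$ along the stopping index $k=k(\delta)$.

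For the first, deterministic, summand---the Landweber approximation error---Theorem~\ref{thm:err-approx} will give convergence to $0$ purely as $k\to\infty$ (by a density argument on the range of $A^t$ relative to the minimum-norm solution $x^\dag$), and the hypothesis $k(\delta)\to\infty$ then suffices. For the second summand---the Landweber noise propagation error---Theorem~\ref{thm:err-prop} supplies a bound of the form $\delta^2$ times a power of $k$, and the balance condition $k(\delta)^{(\alpha-1)/2}\delta\to 0$ is precisely tuned so that the resulting bound vanishes. The third summand---the genuine SGD variance---will be controlled by combining Theorems~\ref{thm:err-var1} and \ref{thm:res-noisy}, which yield an analogous polynomial-in-$k$ bound (again with a $\delta^2$ contribution together with a $\delta$-independent part whose exponent in $k$ is dominated by the first two terms). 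Adding the three estimates and passing to the limit via \eqref{eqn:cond-k} concludes the argument.

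The main obstacle I anticipate is the variance term, since it is not amenable to the deterministic Landweber machinery. It is coupled to the expected squared residuals $\mathbb{E}[\|Ax_k^\delta-y^\delta\|^2]$, which themselves depend on the iterate variance, producing a mutually reinforcing recursion that must be closed. The role of Theorems~\ref{thm:err-var1} and~\ref{thm:res-noisy} is exactly to decouple this pair by bounding each in terms of the other and then resolving the resulting Gronwall-type inequality, crucially exploiting the polynomial decay $\eta_j=c_0 j^{-\alpha}$ from Assumption~\ref{ass:eta} to keep accumulated noise polynomially controlled. Once these tools are granted, the proof of Theorem~\ref{thm:consistency} is essentially bookkeeping: sum the three contributions, invoke $k(\delta)\to\infty$ to dispose of the approximation error, and use $k(\delta)^{(\alpha-1)/2}\delta\to 0$ to dispose of the $\delta$-dependent terms.
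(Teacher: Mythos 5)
Your proposal follows essentially the same route as the paper's proof: the bias--variance/triangle decomposition \eqref{eqn:decomp}, Theorem \ref{thm:err-prop} for the noise-propagation term, and Theorems \ref{thm:err-var1} and \ref{thm:res-noisy} (combined exactly as in Lemma \ref{lem:bdd-var}) for the stochastic variance, with the two conditions in \eqref{eqn:cond-k} invoked just as the paper does. The only cosmetic difference is that, since no source condition is assumed in this theorem, the approximation error is treated in the paper via the spectral/pointwise argument of Theorem \ref{thm:conv-ex} rather than by Theorem \ref{thm:err-approx} itself; your parenthetical density argument on $\mathrm{range}(A^t)$ is precisely that fix, so the argument is sound.
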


To derive convergence rates, we employ the source condition in classical regularization theory
\cite{EnglHankeNeubauer:1996,ItoJin:2015}. Recall that the canonical source condition reads: there exists
some $w\in\mathbb{R}^m$ such that
\begin{equation}\label{eqn:source}
  x^\dag - x_1 = B^pw,\quad p\ge0,
\end{equation}
where the symmetric and positive semidefinite $B\in\mathbb{R}^{m\times m}$ is defined in \eqref{eqn:B} below,
and $B^p$ denotes the usual fractional power (via spectral decomposition). Condition \eqref{eqn:source}
represents a type of smoothness of the initial error $x^\dag-x_1$, and the exponent $p$ determines the degree
of smoothness: the larger the exponent $p$ is, the smoother the initial error $x^\dag-x_1$ becomes. It controls
the approximation error due to early stopping (see Theorem \ref{thm:err-approx} below for the precise statement).
The source type condition is one of the most classical approaches to derive convergence rates in classical
regularization theory \cite{EnglHankeNeubauer:1996,ItoJin:2015}.

Next we can state convergence rates under \textit{a priori} stopping index.
\begin{theorem}\label{thm:err-total}
Let Assumption \ref{ass:eta} and the source condition \eqref{eqn:source} be fulfilled. Then there holds
\begin{equation*}
    \mathbb{E}[\|x_{k+1}^\delta - x^\dag\|^2] \leq ck^{-\min(2\alpha,\min(1,2p)(1-\alpha))}\ln^2 k+c' k^{1-\alpha}\bar\delta^2 + c''\delta^2,
\end{equation*}
where the constants $c,c'$ and $c''$ depend on $\alpha,p,\|w\|,\|Ax_1-y^\delta\|$ and $\|A\|$.
\end{theorem}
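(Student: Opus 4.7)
The plan is to reduce Theorem \ref{thm:err-total} to the three component bounds (approximation, propagation, variance) already advertised in the introduction, and then combine them carefully so that the $\ln^2 k$ factor and the $\min(\cdot,\cdot)$ rate emerge naturally. Writing $\bar{x}_{k+1}:=\mathbb{E}[x_{k+1}]$ for the expected SGD iterate with exact data and $\bar{x}_{k+1}^\delta:=\mathbb{E}[x_{k+1}^\delta]$ for its noisy counterpart, I would start from the identity
\[
  \mathbb{E}[\|x_{k+1}^\delta-x^\dag\|^2]
    \leq 3\|\bar{x}_{k+1}-x^\dag\|^2
       + 3\|\bar{x}_{k+1}^\delta-\bar{x}_{k+1}\|^2
       + 3\,\mathbb{E}[\|x_{k+1}^\delta-\bar{x}_{k+1}^\delta\|^2],
\]
which isolates (i) a deterministic \emph{approximation error} measuring how far the expected exact-data iterate lies from $x^\dag$, (ii) a deterministic \emph{propagation error} produced by replacing $y^\dag$ by $y^\delta$ at the level of expectations, and (iii) the \emph{stochastic variance} of $x_{k+1}^\delta$ around its mean. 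This clean split is possible because the SGD update is affine in the data and its expectation (conditioned on $\mathcal{F}_k$) is driven by the deterministic matrix $B=n^{-1}\sum_i a_ia_i^t$ (cf.\ \eqref{eqn:B}), so $\bar{x}_{k+1}$ follows a Landweber-type recursion.

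Each of the three summands is then controlled by a result already established earlier in the paper. For (i), Theorem \ref{thm:err-approx} uses the source condition \eqref{eqn:source} and spectral calculus on the operator polynomial $\prod_{j=1}^k(I-\eta_j B)$ to bound the approximation error by $c\,k^{-\min(1,2p)(1-\alpha)/2}$ (so its square contributes $k^{-\min(1,2p)(1-\alpha)}$, with a logarithmic factor from the summation of $\eta_j$). For (ii), Theorem \ref{thm:err-prop} estimates the deterministic response of the iteration to the noise $\xi$; summing $\eta_j^2=c_0^2 j^{-2\alpha}$ and telescoping the residual-type factors yields the order $k^{1-\alpha}\bar{\delta}^2$. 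For (iii), Theorems \ref{thm:err-var1} and \ref{thm:res-noisy} give the variance bound: the variance satisfies a recursion driven by the expected squared residual $\mathbb{E}[\|Ax_k^\delta-y^\delta\|^2]$, which splits into a noise-free residual decaying like $k^{-2\alpha}\ln^2 k$ and an irreducible $O(\delta^2)$ floor stemming from the single-sample data noise.

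Assembling these three estimates and using $\min(a,b)\leq a,\ \min(a,b)\leq b$ to merge the approximation rate $k^{-\min(1,2p)(1-\alpha)}$ with the variance rate $k^{-2\alpha}\ln^2 k$ produces the combined exponent $\min(2\alpha,\min(1,2p)(1-\alpha))$ (the $\ln^2 k$ factor dominates the approximation logarithm). The noise-dependent contributions collect into the $k^{1-\alpha}\bar\delta^2$ term from (ii) and the $\delta^2$ floor from (iii), and relabelling constants yields exactly the stated bound. The genuinely hard work has been pushed into Theorem \ref{thm:err-var1} and especially Theorem \ref{thm:res-noisy}, where one must control a non-commuting product of random rank-one contractions $I-\eta_k a_{i_k}a_{i_k}^t$; given those, the present step is essentially a bookkeeping exercise in which the main care is to keep the $\ln^2 k$ factors consistent and to verify that the propagation term is $O(k^{1-\alpha}\bar\delta^2)$ rather than the weaker $O(k\bar\delta^2)$ that a crude estimate would give.
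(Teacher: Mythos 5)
Your overall route is the same as the paper's: split the error into approximation, propagation and stochastic variance (the paper's decomposition \eqref{eqn:decomp}; your factor-3 version is equivalent), then quote Theorems \ref{thm:err-approx}, \ref{thm:err-prop}, \ref{thm:err-var1} and \ref{thm:res-noisy}. The difficulty is that two of the three intermediate rates you assign are not what those theorems deliver, and one of them is a claim the paper's machinery cannot justify. Theorem \ref{thm:err-approx} with $s=0$ gives $\|\mathbb{E}[x_{k+1}]-x^\dag\|\le c_p k^{-p(1-\alpha)}$ — no $\min(1,2p)$ saturation and no logarithm; the cap $\min(1,2p)(1-\alpha)$ and the $\ln^2 k$ do not originate in the approximation error at all. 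They enter through the stochastic term: Theorem \ref{thm:res-noisy} bounds the residual by $k^{-\min(\alpha,\min(1,2p)(1-\alpha))}\ln k + c\,\delta^2\max(1,\ln k)^2$ (not $k^{-2\alpha}\ln^2 k$ plus a flat $\delta^2$), and feeding this into Theorem \ref{thm:err-var1} with $s=0$ together with the summation estimates (Lemma \ref{lem:basicest2}) yields the variance bound of Lemma \ref{lem:bdd-var},
\begin{equation*}
  \mathbb{E}[\|x_{k+1}^\delta-\mathbb{E}[x_{k+1}^\delta]\|^2]\le c\,k^{-\min(1-\alpha,\;\alpha+2p(1-\alpha),\;2\alpha)}\ln^2 k + c'\delta^2 .
\end{equation*}
Your asserted variance rate $k^{-2\alpha}\ln^2 k + O(\delta^2)$ is strictly stronger than this whenever $1-\alpha<2\alpha$ (i.e.\ $\alpha>1/3$) or $2p(1-\alpha)<\alpha$, and nothing in the cited results supports it: even with bounded residuals the weight sum $\sum_j \eta_j^2/\sum_{i>j}\eta_i$ already costs $k^{-\min(\alpha,1-\alpha)}\ln k$, so the exponent $1-\alpha$ cannot be avoided by this argument.

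The saving grace is that each \emph{correct} component bound is still no worse than the advertised final rate: the squared bias contributes $k^{-2p(1-\alpha)} + k^{1-\alpha}\bar\delta^2$ (Theorems \ref{thm:err-approx} and \ref{thm:err-prop} with $s=0$, the latter coming from $\sum_j\eta_j\|\Pi_{j+1}^k(B)B^{1/2}\|\,\bar\delta\lesssim k^{(1-\alpha)/2}\bar\delta$, not from summing $\eta_j^2$), and since $\min(1-\alpha,\alpha+2p(1-\alpha),2\alpha,2p(1-\alpha)) = \min(2\alpha,\min(1,2p)(1-\alpha))$, combining the true bounds reproduces the theorem. So the architecture of your proof is sound and matches the paper, but as written the exponent $\min(2\alpha,\min(1,2p)(1-\alpha))$ is obtained by putting the saturation in the wrong place and compensating with an unproved variance estimate; the missing ingredient is precisely the intermediate step corresponding to Lemma \ref{lem:bdd-var}, where Theorem \ref{thm:res-noisy} is inserted into Theorem \ref{thm:err-var1} and the sums are evaluated via Lemma \ref{lem:basicest2}.
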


\begin{remark}\label{rmk:rate}
Theorem \ref{thm:err-total} indicates a semiconvergence for the iterate $x_k^\delta$: the first term
is decreasing in $k$ and dependent of regularity index $p$ and the step size parameter $\alpha\in(0,1)$,
and the second term $k^{1-\alpha}\bar\delta^2 $ is increasing in $k$ and dependent of the noise level.
The first term $k^{-\min(2\alpha,\min(1,2p)(1-\alpha))}\ln^2 k$ contains both approximation error 
{\rm(}indicated by $p${\rm)} and stochastic error. By properly balancing the first two terms in the 
estimate, one can obtain a convergence rate.  The best possible convergence rate depends on both the decay rate
$\alpha$ and the regularity index $p$ in \eqref{eqn:source}, and it is suboptimal for any $p>\frac12$
when compared with Landweber method. That is, the vanilla SGD seems to suffer from saturation, due
to the stochasticity induced by the random row index $i_k$.
\end{remark}

In practice, it is often observed that SGD decreases the error rapidly during the initial iterations. This
phenomenon cannot be explained by the regularizing property. Instead, we analyze the preasymptotic
convergence by means of SVD, in order to explain the fast initial convergence. Let $n^{-\frac12}A = U\Sigma
V^t$, where $U\in\mathbb{R}^{n\times n}, V=[v_1\ v_2\ \cdots\ v_m]\in\mathbb{R}^{m\times m}$ are orthonormal,
and $\Sigma =\mathrm{diag}(\sigma_1,\ldots,\sigma_r,0,\ldots,0)\in \mathbb{R}^{n\times m}$ is diagonal with
the diagonals ordered nonincreasingly and $r$ the rank of $A$. For any fixed truncation level $1\leq L\leq r$, we
define the low- and high-frequency solution spaces $\mathcal{L}$ and $\mathcal{H}$ respectively by
\begin{equation*}
  \mathcal{L} = \mathrm{span}(\{v_i\}_{i=1}^L)\quad \mbox{and}\quad \mathcal{H} = \mathrm{span}(\{v\}_{i=L+1}^{\min(n,m)}).
\end{equation*}
Let $P_\mathcal{L}$ and $P_\mathcal{H}$ be the orthogonal projection onto $\mathcal{L}$ and $\mathcal{H}$,
respectively. The analysis relies on decomposing the error $e_k^\delta=x_k^\delta-x^\dag$ into the low-
and high-frequency components $P_\mathcal{L}e_k^\delta$ and $P_\mathcal{H}e_k^\delta$, respectively, in
order to capture their essentially different dynamics.

We have the following preasymptotic weak and strong convergence results,
which characterize the one-step evolution of the low- and high-frequency errors. The proofs
are given in Section \ref{app:preasympt}.
\begin{theorem}\label{thm:asympt-weak-noisy}
If $\eta_k\leq c_0 $ with $c_0\max_i\|a_i\|^2\leq 1$,
then there hold
\begin{align*}
  \|\mathbb{E}[P_\mathcal{L}e_{k+1}^\delta]\| &\leq (1-\eta_k\sigma_L^2) \|\mathbb{E}[P_\mathcal{L}e_k^\delta]\|+c_0^{-\frac12}\eta_k\bar\delta,\\
  \|\mathbb{E}[P_\mathcal{H}e_{k+1}^\delta]\| &\leq \|\mathbb{E}[P_\mathcal{H}e_k^\delta]\|+\eta_k\sigma_{L+1}\bar\delta.
\end{align*}
\end{theorem}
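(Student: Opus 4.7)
The plan is to derive an exact recursion for the expected error $\mathbb{E}[e_{k+1}^\delta]$, then project it onto $\mathcal{L}$ and $\mathcal{H}$. Start by writing $y_{i_k}^\delta = (a_{i_k}, x^\dagger) + \xi_{i_k}$, so that the SGD update \eqref{eqn:sgd} becomes
\begin{equation*}
 e_{k+1}^\delta = \bigl(I - \eta_k a_{i_k}a_{i_k}^t\bigr)e_k^\delta + \eta_k \xi_{i_k} a_{i_k},
 \qquad e_k^\delta := x_k^\delta - x^\dagger.
\end{equation*}
Since $i_k$ is drawn uniformly from $\{1,\dots,n\}$ and is independent of $\mathcal{F}_{k-1}$, taking conditional expectation gives $\mathbb{E}[a_{i_k}a_{i_k}^t] = n^{-1}A^tA = B$ and $\mathbb{E}[\xi_{i_k}a_{i_k}] = n^{-1}A^t\xi$. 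Taking a further expectation in $\mathcal{F}_{k-1}$ yields the key deterministic recursion
\begin{equation*}
 \mathbb{E}[e_{k+1}^\delta] = (I - \eta_k B)\,\mathbb{E}[e_k^\delta] + \tfrac{\eta_k}{n} A^t\xi.
\end{equation*}

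Next I would use the SVD $n^{-1/2}A = U\Sigma V^t$, which diagonalizes $B$ with eigenpairs $(\sigma_i^2, v_i)$. Since both $\mathcal{L}$ and $\mathcal{H}$ are spanned by eigenvectors of $B$, the projectors $P_\mathcal{L}$ and $P_\mathcal{H}$ commute with $I - \eta_k B$. Applying $P_\mathcal{L}$ to the recursion and taking norms gives
\begin{equation*}
 \|\mathbb{E}[P_\mathcal{L}e_{k+1}^\delta]\| \leq \|(I-\eta_k B)|_\mathcal{L}\|\,\|\mathbb{E}[P_\mathcal{L}e_k^\delta]\| + \tfrac{\eta_k}{n}\|P_\mathcal{L}A^t\xi\|,
\end{equation*}
and analogously for $P_\mathcal{H}$. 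For the operator norms, I would observe that the step-size condition $c_0\max_i\|a_i\|^2\leq 1$ implies $\eta_k\|B\|\leq \eta_k \max_i\|a_i\|^2\leq 1$ (via $\|B\| = n^{-1}\|A\|^2 \leq \max_i\|a_i\|^2$), so the eigenvalues $1-\eta_k\sigma_i^2$ of $I - \eta_k B$ lie in $[0,1]$; restricting to $\mathcal{L}$ the largest is $1-\eta_k\sigma_L^2$, and restricting to $\mathcal{H}$ we only have the trivial bound $1$.

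For the noise terms, using $v_i^tA^t = \sqrt{n}\,\sigma_i u_i^t$ from the SVD, I obtain
\begin{equation*}
 \|P_\mathcal{L}A^t\xi\|^2 = \sum_{i=1}^L n\sigma_i^2 (u_i^t\xi)^2 \leq n\sigma_1^2\|\xi\|^2,
 \qquad
 \|P_\mathcal{H}A^t\xi\|^2 \leq n\sigma_{L+1}^2\|\xi\|^2,
\end{equation*}
so $n^{-1}\|P_\mathcal{L}A^t\xi\| \leq \sigma_1\bar\delta$ and $n^{-1}\|P_\mathcal{H}A^t\xi\| \leq \sigma_{L+1}\bar\delta$. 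The low-frequency bound in the theorem has the prefactor $c_0^{-1/2}$ rather than $\sigma_1$, and this is exactly where the step-size condition enters a second time: $\sigma_1^2\leq \|B\|\leq c_0^{-1}$, hence $\sigma_1\leq c_0^{-1/2}$. Combining these pieces gives the two claimed inequalities.

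The only subtle point, and the step I would double-check carefully, is the interplay between the normalization $n^{-1/2}A$ used in the SVD and the factor $n^{-1}$ appearing in $B$ and in the noise term: it is what converts $\|\xi\| = \delta$ into $\bar\delta = n^{-1/2}\delta$ with the correct powers of $\sigma_i$. Everything else is straightforward linear algebra once the recursion for $\mathbb{E}[e_{k+1}^\delta]$ is in hand.
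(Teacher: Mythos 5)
Your proposal is correct and follows essentially the same route as the paper: the mean error satisfies the deterministic recursion $\mathbb{E}[e_{k+1}^\delta]=(I-\eta_kB)\mathbb{E}[e_k^\delta]+\eta_k\bar A^t\xi$, the projections $P_\mathcal{L},P_\mathcal{H}$ commute with $I-\eta_kB$ (the paper phrases this as $P_\mathcal{L}BP_\mathcal{H}=0$), and the noise terms are bounded via $\sigma_1^2=\|B\|\le c_0^{-1}$ for the low-frequency part and $\sigma_{L+1}$ for the high-frequency part. The only cosmetic difference is that you bound the low-frequency noise through the SVD as $\sigma_1\bar\delta$ before relaxing to $c_0^{-1/2}\bar\delta$, while the paper uses $\|\bar A^t\|=n^{-1/2}\|B\|^{1/2}$ directly.
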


\begin{theorem}\label{thm:asympt-noisy}
If $\eta_k\leq c_0 $ with $c_0\max_i\|a_i\|^2\leq 1$, then with $c_1=\sigma_L^2$,
and $c_2=\sum_{i=L+1}^r\sigma_i^2$, there hold
\begin{align*}
  \mathbb{E}[\|P_\mathcal{L}e_{k+1}^\delta\|^2|\mathcal{F}_{k-1}]  &\leq (1-c_1\eta_k) \|P_\mathcal{L}e_k^\delta\|^2 + c_2c_0^{-1}\eta_k^2\|P_\mathcal{H}e_k^\delta\|^2
  +c_0^{-1}\eta_k\bar\delta(\eta_k\bar\delta+2\sqrt{2}\sigma_1\|e_k^\delta\|),\\
  \mathbb{E}[\|P_\mathcal{H}e_{k+1}^\delta\|^2|\mathcal{F}_{k-1}] &\leq c_2c_0^{-1}\eta_k^2\|P_\mathcal{L}e_k^\delta\|^2 +(1+c_2c_0^{-1}\eta_k^2)\|P_\mathcal{H}e_k^\delta\|^2
+c_0^{-1}\eta_k^{2}\bar\delta^2 \\
 &\quad +2\sqrt{2}c_2^\frac{1}{2}\eta_k \bar\delta\Big(\|P_\mathcal{H}e_k^\delta\|^2 + c_0^{-2}\eta_k^2\|e_k^\delta\|^2\Big)^\frac{1}{2}.
\end{align*}
\end{theorem}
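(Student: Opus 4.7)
The plan is to square the one-step error recursion $e_{k+1}^\delta = e_k^\delta - \eta_k r_{i_k} a_{i_k}$ (with residual $r_i := (a_i,e_k^\delta) - \xi_i$) after projecting onto $\mathcal{L}$ or $\mathcal{H}$, then take the conditional expectation $\mathbb{E}[\,\cdot\mid\mathcal{F}_{k-1}]$. Since $e_k^\delta$ is $\mathcal{F}_{k-1}$-measurable and $i_k$ is uniform on $\{1,\ldots,n\}$, each conditional expectation reduces to $n^{-1}\sum_{i=1}^n(\cdot)$. Writing $B := n^{-1}A^tA$ and using that $B$ commutes with both spectral projectors $P_\mathcal{L}$ and $P_\mathcal{H}$, the first-moment contribution simplifies to $\|B^{1/2}P_\mathcal{L}e_k^\delta\|^2 - (n^{-1}A^t\xi, P_\mathcal{L}e_k^\delta)$, with the analogous identity for $P_\mathcal{H}$.

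For the low-frequency bound I would use $\|P_\mathcal{L}a_i\|^2 \leq \|a_i\|^2 \leq c_0^{-1}$ (from Assumption~\ref{ass:eta}) to control the variance by $c_0^{-1}\eta_k^2 n^{-1}\|Ae_k^\delta - \xi\|^2$, and split $n^{-1}\|Ae_k^\delta\|^2 = \|B^{1/2}P_\mathcal{L}e_k^\delta\|^2 + \|B^{1/2}P_\mathcal{H}e_k^\delta\|^2$. The low-frequency piece partially cancels the first-moment term, leaving net coefficient $-\eta_k(2-c_0^{-1}\eta_k)\leq -\eta_k$ under $\eta_k\leq c_0$, which combined with $\|B^{1/2}P_\mathcal{L}e_k^\delta\|^2 \geq c_1\|P_\mathcal{L}e_k^\delta\|^2$ delivers the contraction factor $(1-c_1\eta_k)$. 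The high-frequency piece is bounded by $\|B^{1/2}P_\mathcal{H}e_k^\delta\|^2\leq c_2\|P_\mathcal{H}e_k^\delta\|^2$ (since $\sigma_{L+1}^2 \leq c_2$), producing the cross-coupling $c_2 c_0^{-1}\eta_k^2\|P_\mathcal{H}e_k^\delta\|^2$. The two linear-in-$\xi$ pieces fuse as $2(n^{-1}A^t\xi, \eta_k P_\mathcal{L}e_k^\delta - c_0^{-1}\eta_k^2 e_k^\delta)$, and after Cauchy--Schwarz with $\|n^{-1}A^t\xi\|\leq \sigma_1\bar\delta$ combined with $X+Y\leq\sqrt{2}(X^2+Y^2)^{1/2}$ they join the pure-noise $c_0^{-1}\eta_k^2\bar\delta^2$ to produce the claimed noise term.

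The high-frequency bound follows the same skeleton but with a different variance decomposition: the estimate $\|P_\mathcal{H}a_i\|^2\leq c_0^{-1}$ would produce the (typically much larger) coefficient $\sigma_1^2$ in place of the desired $c_2$. Instead I would bound $(a_i,e_k^\delta)^2\leq \|a_i\|^2\|e_k^\delta\|^2 \leq c_0^{-1}\|e_k^\delta\|^2$ and keep $\|P_\mathcal{H}a_i\|^2$ inside the sum, exploiting the key identity $n^{-1}\sum_{i=1}^n \|P_\mathcal{H}a_i\|^2 = \mathrm{tr}(BP_\mathcal{H}) = c_2$. Expanding $r_i^2 = (a_i,e_k^\delta)^2 - 2(a_i,e_k^\delta)\xi_i + \xi_i^2$ and estimating each piece by Cauchy--Schwarz then gives $c_2 c_0^{-1}\eta_k^2(\|P_\mathcal{L}e_k^\delta\|^2 + \|P_\mathcal{H}e_k^\delta\|^2)$ from the signal piece, a pure-noise $c_0^{-1}\eta_k^2\bar\delta^2$, and a mixed term bounded by $2c_0^{-1}c_2^{1/2}\eta_k^2\bar\delta\|e_k^\delta\|$. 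The nonpositive first-moment term $-2\eta_k\|B^{1/2}P_\mathcal{H}e_k^\delta\|^2$ is dropped (no contraction is expected here, since $\sigma_{L+1}$ may vanish), and the first-order noise $2\eta_k(n^{-1}A^t\xi, P_\mathcal{H}e_k^\delta)$ is bounded by $2\eta_k c_2^{1/2}\bar\delta\|P_\mathcal{H}e_k^\delta\|$ via $\|P_\mathcal{H}(n^{-1}A^t\xi)\| \leq c_2^{1/2}\bar\delta$; combining with the mixed term through $X+Y\leq\sqrt{2}(X^2+Y^2)^{1/2}$ yields the stated square-root factor $(\|P_\mathcal{H}e_k^\delta\|^2 + c_0^{-2}\eta_k^2\|e_k^\delta\|^2)^{1/2}$.

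The main technical obstacle is choosing the right variance decomposition in each case: using $\|P_\mathcal{L}a_i\|^2\leq c_0^{-1}$ preserves the tight contraction coefficient $c_1$ in the low-frequency bound, whereas retaining $\|P_\mathcal{H}a_i\|^2$ inside the summation is essential to recover the trace coefficient $c_2$ (rather than the much larger $\sigma_1^2$) in the high-frequency bound. The remainder is careful algebraic bookkeeping of noise cross-terms.
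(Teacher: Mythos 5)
Your proposal is correct and follows essentially the same strategy as the paper: project the one-step recursion \eqref{eqn:iter-noisy} onto $\mathcal{L}$ and $\mathcal{H}$, expand the square, take the conditional expectation, reuse the exact-data argument of Lemma \ref{thm:asympt-exact} for the signal part (including the key identity $n^{-1}\sum_{i}\|P_\mathcal{H}a_i\|^2=\|P_\mathcal{H}B^{1/2}\|_F^2=c_2$ and the cancellation $-2\eta_k+c_0^{-1}\eta_k^2\le-\eta_k$ that yields the contraction $(1-c_1\eta_k)$), and control the noise terms by Cauchy--Schwarz. The differences are organizational: you keep the full residual $((a_{i_k},e_k^\delta)-\xi_{i_k})^2$ inside the variance term before expanding, and you apply Cauchy--Schwarz in $\mathbb{R}^m$ via the operator-norm bounds $\|\bar A^t\xi\|\le\sigma_1\bar\delta$ and $\|P_\mathcal{H}\bar A^t\xi\|\le\sigma_{L+1}\bar\delta\le c_2^{1/2}\bar\delta$, whereas the paper collects the linear-in-$\xi$ contributions into a single sum $2n^{-1}\eta_k\sum_i\xi_i{\rm I}_{3,i}$ (resp.\ ${\rm I}_{6,i}$) and performs one Cauchy--Schwarz over the data index $i$ against the vector $\xi$. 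For the high-frequency estimate the two routes give exactly the stated bound. For the low-frequency estimate your fused noise term evaluates to $2\sqrt{2}\sigma_1\eta_k\bar\delta\bigl(\|P_\mathcal{L}e_k^\delta\|^2+c_0^{-2}\eta_k^2\|e_k^\delta\|^2\bigr)^{1/2}\le 4\sigma_1\eta_k\bar\delta\|e_k^\delta\|$, i.e.\ absolute constant $4$ in place of the stated $2\sqrt{2}\,c_0^{-1}$, so it reproduces the theorem verbatim only when $c_0\le 1/\sqrt{2}$; the paper's index-wise Cauchy--Schwarz, which uses $\sum_i\bigl[(P_\mathcal{L}a_i,P_\mathcal{L}e_k^\delta)^2+(P_\mathcal{H}a_i,P_\mathcal{H}e_k^\delta)^2\bigr]\le n\sigma_1^2\|e_k^\delta\|^2$, gives $2\sqrt{2}$ directly (and, incidentally, also lacks the $c_0^{-1}$ appearing in the statement). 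This is a harmless factor-$\sqrt{2}$ looseness in one constant, not a gap in the argument; all structural steps, including the choice of where to invoke $\|P_\mathcal{L}a_i\|^2\le c_0^{-1}$ versus retaining $\|P_\mathcal{H}a_i\|^2$ to obtain the trace constant $c_2$, are sound and mirror the paper.
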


\begin{remark}
It is noteworthy that in Theorems \ref{thm:asympt-weak-noisy} and \ref{thm:asympt-noisy}, the
step size $\eta_k$ is not required to be polynomially decaying.
Theorems \ref{thm:asympt-weak-noisy} and \ref{thm:asympt-noisy} indicate that the low-frequency
error can decrease much faster than the high-frequency error in either the weak or mean squared
norm sense. Thus, if the initial error $e_1$ consists mostly of low-frequency modes, SGD can
decrease the low-frequency error and thus also the total error rapidly, resulting in
fast initial convergence.
\end{remark}

\section{Preliminary estimates}\label{sec:basicest}
In this part, we provide several technical estimates for the SGD iteration \eqref{eqn:sgd}. By
bias-variance decomposition and triangle inequality, we have
\begin{align}
  \E[\|x_k^\delta - x^\dag\|^2] & = \|\E[x_k^\delta]-x^\dag\|^2 + \E[\|\E[x_k^\delta]-x_k^\delta\|^2]\nonumber\\
     & \leq 2\|\E[x_k]-x^\dag\|^2 + 2\|\E[x_k-x_k^\delta]\|^2 + \E[\|\E[x_k^\delta]-x_k^\delta\|^2],\label{eqn:decomp}
\end{align}
where $x_k$ is the random iterate for exact data $y^\dag$. Thus, the total error is decomposed into three components:
approximation error due to early stopping, propagation error due to noise and stochastic error due to the random
index $i_k$. The objective below is to derive bounds on the three terms in \eqref{eqn:decomp}, which are crucial for
proving Theorems \ref{thm:consistency} and \ref{thm:err-total} in Section \ref{sec:reg}. The approximation and
propagation errors are given in Theorems \ref{thm:err-approx} and \ref{thm:err-prop}, respectively. The stochastic
error is analyzed in Section \ref{ssec:residual}: first in terms of the expected squared residuals in Theorem
\ref{thm:err-var1}, and then bound on the latter in Theorem \ref{thm:res-noisy}. The analysis of the stochastic error
represents the main technical challenge.

\subsection{Approximation and propagation errors}

For the analysis, we first introduce auxiliary iterations. Let $e_{k}^\delta = x_k^\delta-x^\dag$ and $e_k = x_k -x^\dag$
be the errors for SGD iterates $x_k^\delta$ and $x_k$, for $y^\delta$ and $y^\dag$, respectively.
They satisfy the following recursion:
\begin{align}
  e_{k+1} & = e_k - \eta_k ((a_{i_k},x_k)-y_{i_k}^\dag)a_{i_k}
       = e_k - \eta_k(a_{i_k},e_k)a_{i_k},\label{eqn:iter-ex}\\
  e_{k+1}^\delta & = e_k^\delta - \eta_k ((a_{i_k},x_k^\delta)-y_{i_k}^\delta)a_{i_k}
    = e_k^\delta - \eta_k(a_{i_k},e_k^\delta)a_{i_k} +\eta_k\xi_{i_k}a_{i_k}\label{eqn:iter-noisy}.
\end{align}
Then we introduce two auxiliary matrices: for any vector $b\in\mathbb{R}^n$,
\begin{align}\label{eqn:B}
  B:=\E[a_ia_i^t] \quad\mbox{and}\quad  \bar A^tb:=\E[a_ib_i].
\end{align}
Under i.i.d. uniform sampling of the index $i_k$, $B=n^{-1}A^tA$ and $\bar A^t=n^{-1}A^t$. Below,
let
\begin{equation}\label{eqn:Pi}
  \Pi_{j}^k(B) = \prod_{i=j}^k(I-\eta_iB), \quad j\leq k,
\end{equation}
with the convention $\Pi_{k+1}^k(B)=I$,

Now we bound the weighted norm $\|B^s\mathbb{E}[e_{k}]\|$ of the approximation error $\E[e_{k}]$. The cases $s=0$ and
$s=1/2$ will be used for bounding the approximation error and
the residual, respectively.
\begin{theorem}\label{thm:err-approx}
Let Assumption \ref{ass:eta} be fulfilled. Under the source condition \eqref{eqn:source} and
for any $s\geq0$, with $c_{p,s}=(\frac{(p+s)(1-\alpha)}{c_0e(2^{1-\alpha}-1)})^{p+s}\|w\|$, there holds
\begin{equation*}
  \|B^s\mathbb{E}[e_{k+1}]\|\leq c_{p,s}k^{-(p+s)(1-\alpha)}.
\end{equation*}
\end{theorem}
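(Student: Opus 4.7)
The plan is to reduce the bound to a scalar functional calculus inequality on the spectrum of $B$. First I would take the conditional expectation of the recursion \eqref{eqn:iter-ex} with respect to $\mathcal{F}_{k-1}$: since $i_k$ is independent of $\mathcal{F}_{k-1}$ and $\mathbb{E}[a_{i_k}a_{i_k}^t]=B$, this gives $\mathbb{E}[e_{k+1}\mid\mathcal{F}_{k-1}]=(I-\eta_k B)e_k$, and then taking the full expectation yields the deterministic linear recursion
\begin{equation*}
\mathbb{E}[e_{k+1}] = (I-\eta_k B)\,\mathbb{E}[e_k].
\end{equation*}
Iterating and using the source condition $e_1=x_1-x^\dag=-B^pw$, I obtain $\mathbb{E}[e_{k+1}]=-\Pi_1^k(B)\,B^pw$, so that $B^s\mathbb{E}[e_{k+1}]=-B^{p+s}\Pi_1^k(B)w$ and hence
\begin{equation*}
\|B^s\mathbb{E}[e_{k+1}]\|\le \|w\|\cdot\sup_{\lambda\in[0,\|B\|]}\lambda^{p+s}\prod_{i=1}^k(1-\eta_i\lambda).
\end{equation*}

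Next, the step-size condition $c_0\max_i\|a_i\|^2\le 1$ implies $\|B\|\le c_0^{-1}$ and $\eta_i\lambda\in[0,1]$ for every $\lambda$ in the spectrum of $B$. Therefore each factor $1-\eta_i\lambda$ lies in $[0,1]$, and the elementary bound $1-t\le e^{-t}$ gives
\begin{equation*}
\prod_{i=1}^k(1-\eta_i\lambda)\le \exp\!\Bigl(-\lambda\sum_{i=1}^k\eta_i\Bigr).
\end{equation*}
Then I would apply the scalar maximization $x^a e^{-bx}\le (a/(be))^a$ valid for $x,a,b>0$ with $a=p+s$ and $b=\sum_{i=1}^k\eta_i$, which produces the upper bound $\bigl((p+s)/(eb)\bigr)^{p+s}\|w\|$.

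The only remaining ingredient is a clean lower bound of the form $\sum_{i=1}^k\eta_i\ge c_0\,\tfrac{2^{1-\alpha}-1}{1-\alpha}\,k^{1-\alpha}$, which matches the constant $c_{p,s}$ in the statement. I would deduce this from the integral comparison $\sum_{i=1}^ki^{-\alpha}\ge\int_1^{k+1}t^{-\alpha}\d t=\bigl((k+1)^{1-\alpha}-1\bigr)/(1-\alpha)$ combined with the elementary inequality $(k+1)^{1-\alpha}-1\ge (2^{1-\alpha}-1)\,k^{1-\alpha}$ for $k\ge 1$ (equality at $k=1$, and the derivative of the difference is nonnegative throughout). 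Substituting this lower bound into the maximization yields exactly
\begin{equation*}
\|B^s\mathbb{E}[e_{k+1}]\|\le \Bigl(\tfrac{(p+s)(1-\alpha)}{c_0 e(2^{1-\alpha}-1)}\Bigr)^{p+s}\|w\|\,k^{-(p+s)(1-\alpha)},
\end{equation*}
as desired. The main obstacle I anticipate is pinning down the sharp constant in the sum-lower-bound so that the resulting constant $c_{p,s}$ matches precisely; the rest is standard functional calculus in the spirit of Landweber-type analysis \cite[Chapter 6]{EnglHankeNeubauer:1996}, which goes through unchanged here because $\mathbb{E}[e_k]$ obeys the deterministic Landweber-like recursion driven by $B$.
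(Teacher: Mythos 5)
Your proposal is correct and follows essentially the same route as the paper: take expectations to get the deterministic recursion $\mathbb{E}[e_{k+1}]=(I-\eta_k B)\mathbb{E}[e_k]$, write $B^s\mathbb{E}[e_{k+1}]=\pm\Pi_1^k(B)B^{p+s}w$ via the source condition, and bound the operator factor spectrally. The only cosmetic difference is that you reprove inline (via $1-t\le e^{-t}$, the maximization $x^ae^{-bx}\le(a/(be))^a$, and the integral comparison for $\sum_{i\le k}\eta_i$) exactly what the paper delegates to Lemmas \ref{lem:kernel} and \ref{lem:basicest}.
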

\begin{proof}
It follows from \eqref{eqn:iter-ex} and the identity $y_i^\dag=(a_i,x^\dag)$ that the error $e_{k}$ satisfies
\begin{align*}
  \mathbb{E}[e_{k+1}|\mathcal{F}_{k-1}] &= (I-\eta_k\mathbb{E}[a_ia_i^t])e_k = (I-\eta_k B)e_k.
\end{align*}
Taking the full expectation yields
\begin{equation}\label{eqn:iter-mean}
\mathbb{E}[e_{k+1}] = (I-\eta_k B)\mathbb{E}[e_k].
\end{equation}
Repeatedly applying the recursion \eqref{eqn:iter-mean} and noting that $e_1$ is deterministic give
\begin{equation*}
  \mathbb{E}[e_{k+1}] = \prod_{i=1}^k(I-\eta_iB)\mathbb{E}[e_1] = \prod_{i=1}^k(I-\eta_iB)e_1.
\end{equation*}
From the source condition \eqref{eqn:source}, we deduce
\begin{equation*}
  \|B^s\mathbb{E}[e_{k+1}]\| \leq \|\Pi_{1}^k(B)B^{p+s}\|\|w\|.
\end{equation*}
By Lemmas \ref{lem:kernel} and \ref{lem:basicest}, we arrive at
\begin{align*}
  \|\mathbb{E}[e_{k+1}]\| & \leq \frac{(p+s)^{p+s}}{e^{p+s}(\sum_{i=1}^k\eta_i)^{p+s}}\|w\|
   \leq c_{p,s} k^{-(p+s)(1-\alpha)},
\end{align*}
with a constant $c_{p,s}=(\frac{(p+s)(1-\alpha)}{c_0e(2^{1-\alpha}-1)})^{p+s}\|w\|$.
This completes the proof of the theorem.
\end{proof}

\begin{remark}
The constant $c_{p,s}$ is uniformly bounded in $\alpha\in[0,1]$: $\lim_{\alpha\to1^-}\frac{1-\alpha}{2^{1-\alpha}-1} = \frac{1}{\ln 2}$.
\end{remark}

Next we bound the weighted norm of the propagation error
$\E[x_k^\delta-x_k]$ due to data noise $\xi$.
\begin{theorem}\label{thm:err-prop}
Let Assumption \ref{ass:eta} be fulfilled, $s\in[-\frac12,\frac{1}{2}]$, and $r=\frac{1}{2}+s$. Then there holds
\begin{equation*}
  \|B^s\E[x_{k+1}-x_{k+1}^\delta]\| \leq c_{r,\alpha}\bar\delta \left\{\begin{array}{ll}
        k^{(1-r)(1-\alpha)}, & 0\leq r <1,\\
      \max(1,\ln k), & r= 1,
     \end{array}\right.
\end{equation*}
with $c_{r,\alpha}$ given by
\begin{equation*}
  c_{r,\alpha}=c_0^{1-r}\left\{\begin{array}{ll}
     \frac{r^r}{e^r}B(1-\alpha,1-r)+1, & r<1,\\
     \frac{r^r}{e^r}2^\alpha\frac{2-\alpha}{1-\alpha}+1, &r=1.
     \end{array}\right.
\end{equation*}
\end{theorem}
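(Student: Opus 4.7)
The plan is to reduce the stochastic propagation error to a deterministic Landweber-type bound by taking expectation, then exploit the SVD to convert the asymmetric $B^s\bar A^t$-norm into a symmetric $B^r$-norm, and finally invoke the kernel estimate of Lemma \ref{lem:kernel}.

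First, set $v_k:=x_k-x_k^\delta$. Subtracting \eqref{eqn:iter-ex} from \eqref{eqn:iter-noisy} gives
\[
v_{k+1}=v_k-\eta_k(a_{i_k},v_k)a_{i_k}-\eta_k\xi_{i_k}a_{i_k},\qquad v_1=0.
\]
Taking the conditional expectation on $\mathcal{F}_{k-1}$ and then the full expectation, with the notation \eqref{eqn:B}, yields the deterministic recursion $\E[v_{k+1}]=(I-\eta_kB)\E[v_k]-\eta_k\bar A^t\xi$. Unrolling it, with the convention $\Pi_{k+1}^k(B)=I$, gives
\[
\E[v_{k+1}]=-\sum_{j=1}^k\eta_j\,\Pi_{j+1}^k(B)\bar A^t\xi,
\]
so by the triangle inequality and $\|\xi\|\le\delta$,
\[
\|B^s\E[v_{k+1}]\|\le \delta\sum_{j=1}^k\eta_j\,\|B^s\Pi_{j+1}^k(B)\bar A^t\|.
\]

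Next, I would use the SVD $n^{-1/2}A=U\Sigma V^t$ from Section \ref{sec:main}, which gives $B=V\Sigma^t\Sigma V^t$ and $\bar A^t=n^{-1/2}V\Sigma^tU^t$. Working in the joint eigenbasis of $B$ and $\bar A^t\bar A$, the operator $B^s\Pi_{j+1}^k(B)\bar A^t$ becomes a rectangular diagonal matrix with entries $n^{-1/2}\sigma_\ell^{2s+1}\prod_{i=j+1}^k(1-\eta_i\sigma_\ell^2)$, whose operator norm equals $n^{-1/2}\|B^r\Pi_{j+1}^k(B)\|$ with $r=s+\tfrac12\in[0,1]$. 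Applying Lemma \ref{lem:kernel} to the terms with $j<k$, together with the bound $\|B^r\|\le c_0^{-r}$ for $j=k$ (inherited from $c_0\max_i\|a_i\|^2\le1$, which implies $c_0\|B\|\le1$), yields
\[
\|B^s\E[v_{k+1}]\|\le \bar\delta\Bigl[\frac{r^r}{e^r}\sum_{j=1}^{k-1}\frac{\eta_j}{\bigl(\sum_{i=j+1}^k\eta_i\bigr)^r}+c_0^{1-r}\Bigr].
\]

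The main obstacle is bounding the remaining discrete sum under the polynomial schedule $\eta_j=c_0j^{-\alpha}$. I would first apply the integral lower bound $\sum_{i=j+1}^k\eta_i\ge\tfrac{c_0}{1-\alpha}(k^{1-\alpha}-j^{1-\alpha})$ and pass the outer sum to an integral, then substitute $u=j/k$ to bring it into the form $c_0^{1-r}(1-\alpha)^r k^{(1-r)(1-\alpha)}\int_0^1 u^{-\alpha}(1-u^{1-\alpha})^{-r}\,du$. The elementary inequality $1-u^{1-\alpha}\ge(1-\alpha)(1-u)$ on $[0,1]$ then controls the integrand by a Beta-type integrand, producing the factor $B(1-\alpha,1-r)$ when $r<1$; in the boundary case $r=1$ the same integral diverges only logarithmically and yields the $\ln k$ bound with prefactor $2^\alpha(2-\alpha)/(1-\alpha)$. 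Combining with the $c_0^{1-r}$ contribution from the $j=k$ term gives the stated constant $c_{r,\alpha}$ and completes the proof.
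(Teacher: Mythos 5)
Your overall route is the paper's: you form the recursion for the mean propagation error $\E[x_k-x_k^\delta]$, unroll it to $-\sum_{j=1}^k\eta_j\Pi_{j+1}^k(B)\bar A^t\xi$, use the identity $\|B^s\Pi_{j+1}^k(B)\bar A^t\|=n^{-\frac12}\|B^{r}\Pi_{j+1}^k(B)\|$ with $r=s+\tfrac12$, apply Lemma \ref{lem:kernel} to the terms $j<k$ and $c_0\|B\|\le1$ to the $j=k$ term. The only divergence is that the paper then cites Lemma \ref{lem:basicest} for the sum $\sum_{j=1}^{k-1}\eta_j(\sum_{i=j+1}^k\eta_i)^{-r}$, whereas you re-derive that estimate, and your re-derivation contains a reversed inequality. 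Since $s\mapsto s^{-\alpha}$ is decreasing, one has $\sum_{i=j+1}^k i^{-\alpha}\le\int_j^k s^{-\alpha}\,\mathrm{d}s=\tfrac{k^{1-\alpha}-j^{1-\alpha}}{1-\alpha}$, so your claimed lower bound $\sum_{i=j+1}^k\eta_i\ge\tfrac{c_0}{1-\alpha}(k^{1-\alpha}-j^{1-\alpha})$ is false (for $j=k-1$ it would assert $k^{-\alpha}\ge\tfrac{k^{1-\alpha}-(k-1)^{1-\alpha}}{1-\alpha}$, contradicting the mean value theorem). Because this false bound enlarges the denominators, your chain as written does not produce a valid upper bound on the sum, and in particular does not justify the specific constant $B(1-\alpha,1-r)$ appearing in $c_{r,\alpha}$.

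The fix is immediate and in fact makes your inequality $1-u^{1-\alpha}\ge(1-\alpha)(1-u)$ unnecessary: bound the inner sum termwise by $\eta_i\ge c_0k^{-\alpha}$, so $\sum_{i=j+1}^k\eta_i\ge c_0k^{-\alpha}(k-j)$, and then compare $\sum_{j=1}^{k-1}j^{-\alpha}(k-j)^{-r}$ with $\int_0^k s^{-\alpha}(k-s)^{-r}\,\mathrm{d}s=B(1-\alpha,1-r)k^{1-\alpha-r}$; multiplying by $k^{r\alpha}$ gives exactly $c_0^{1-r}B(1-\alpha,1-r)k^{(1-r)(1-\alpha)}$, i.e.\ estimate \eqref{eqn:basicest2}, and together with the $j=k$ contribution $c_0^{1-r}$ this yields the stated $c_{r,\alpha}$. (The correct integral comparison would be $\sum_{i=j+1}^k\eta_i\ge\tfrac{c_0}{1-\alpha}\bigl((k+1)^{1-\alpha}-(j+1)^{1-\alpha}\bigr)$, but recovering the exact constant from it needs extra care, so the cruder termwise bound is preferable.) Finally, the case $r=1$ is only asserted in your sketch; it requires the separate treatment of Lemma \ref{lem:basicest} (splitting the sum at $[\tfrac{k}{2}]$), which gives $2^\alpha((1-\alpha)^{-1}+\ln k)$ and hence, via $(1-\alpha)^{-1}+\ln k\le\tfrac{2-\alpha}{1-\alpha}\max(1,\ln k)$, the stated prefactor.
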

\begin{proof}
By the recursions \eqref{eqn:iter-ex} and \eqref{eqn:iter-noisy}, the propagation error
$\nu_k=\mathbb{E}[x_k^\delta-x_k]$ satisfies $\nu_1=0$ and $\nu_{k+1} = (I-\eta_kB)\nu_k
+ \eta_k\bar A^t\xi$, with $\xi=y^\delta-y^\dag$. Applying the recursion repeatedly yields
\begin{equation*}
  \nu_{k+1} = \sum_{j=1}^{k} \eta_j\Pi_{j+1}^k(B)\bar A^t \xi.
\end{equation*}
Thus, by the triangle inequality, we have
\begin{align*}
  \|B^s\nu_{k+1}\| \leq \sum_{j=1}^{k} \eta_j\|B^s\Pi_{j+1}^k(B)\bar A^t\|\| \xi\|.
\end{align*}
Since $\|B^s\Pi_{j+1}^k(B)\bar A^t\| = n^{-\frac12}\|\Pi_{j+1}^k(B)B^{s+\frac12}\|$,
by Lemma \ref{lem:kernel},
\begin{align*}
  \|B^s\nu_{k+1}\| &\leq \frac{r^r}{e^r}\sum_{j=1}^{k-1}\frac{\eta_j}{(\sum_{i=j+1}^k\eta_i)^r}\bar\delta+\eta_k\|B^s\bar A^t\|\|\xi\|\\
   & = \Big(\frac{r^r}{e^r}\sum_{j=1}^{k-1}\frac{\eta_j}{(\sum_{i=j+1}^k\eta_i)^r} + k^{-\alpha}c_0\|B\|^r\Big)\bar\delta.
\end{align*}
Under Assumption \ref{ass:eta}, we have $c_0\|B\|^r \leq c_0^{1-r}$. This and Lemma \ref{lem:basicest} complete the proof.
\end{proof}

\begin{remark}
The iterate means $\mathbb{E}[x_k]$ and $\E[x_k^\delta]$ satisfy the recursion for Landweber method {\rm(}LM{\rm)}. Hence, the
proof and error bounds resemble closely that for LM \cite[Chapter 6]{EnglHankeNeubauer:1996}. Taking $s=0$ in
Theorems \ref{thm:err-approx} and \ref{thm:err-prop} yields
\begin{equation*}
  \|\E[x_{k+1}^\delta]-x^\dag\|\leq c_pk^{-p(1-\alpha)} + c_\alpha k^{\frac{1-\alpha}{2}}\bar\delta.
\end{equation*}
By balancing the two terms, one can derive a convergence rate in terms of $\bar\delta$ {\rm(}instead
of $\delta${\rm)}, and this is achieved quickest by $\alpha=0$. Such an estimate is known as weak
error in the literature of stochastic differential equations. By bias variance decomposition, it is
weaker than the mean squared error.
\end{remark}

\subsection{Stochastic error}\label{ssec:residual}

The next result gives a bound on the variance $\mathbb{E}[\|B^s(x_k^\delta - \E[x_k^\delta])\|^2]$.
It arises from the random index $i_k$ in SGD \eqref{eqn:sgd}. Theorem \ref{thm:err-var1}
relates the variance to the past mean squared residuals $\{\E[\|Ax_j^\delta-y^\delta\|^2]\}_{j=1}^k$
and step sizes $\{\eta_j\}_{j=1}^k$. The extra exponent $\frac{1}{2}$ follows from the quadratic
structure of the least-squares functional.

\begin{theorem}\label{thm:err-var1}
For the SGD iteration \eqref{eqn:sgd}, there holds
\begin{equation*}
  \mathbb{E}[\|B^s(x_{k+1}^\delta-\mathbb{E}[x_{k+1}^\delta])\|^2] \leq \sum_{j=1}^{k}  \eta_j^2\|B^{s+\frac{1}{2}}\Pi_{j+1}^k(B)\|^2 \mathbb{E}[\|Ax_j^\delta-y^\delta\|^2].
\end{equation*}
\end{theorem}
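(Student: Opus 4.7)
The plan is to decompose the iterate into its mean plus a martingale remainder driven by the stochastic-gradient noise, unroll the linear recursion for the remainder, and use orthogonality of martingale differences to reduce the variance to a single sum. Write the stochastic gradient as $g_k := ((a_{i_k},x_k^\delta)-y_{i_k}^\delta)a_{i_k}$, so that \eqref{eqn:sgd} reads $x_{k+1}^\delta = x_k^\delta - \eta_kg_k$, and set $\zeta_k := g_k - \mathbb{E}[g_k|\mathcal{F}_{k-1}]$. By construction $\{\zeta_k\}$ is a martingale difference sequence with respect to $\{\mathcal{F}_k\}$, and under uniform sampling $\mathbb{E}[g_k|\mathcal{F}_{k-1}] = Bx_k^\delta - \bar A^ty^\delta$ thanks to \eqref{eqn:B}.

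First I would take full expectation of \eqref{eqn:sgd} to obtain $\mathbb{E}[x_{k+1}^\delta] = (I-\eta_kB)\mathbb{E}[x_k^\delta] + \eta_k\bar A^ty^\delta$; subtracting from \eqref{eqn:sgd} yields the recursion
\begin{equation*}
  M_{k+1} = (I-\eta_kB)M_k - \eta_k\zeta_k, \qquad M_1 = 0,
\end{equation*}
for the centered iterate $M_k := x_k^\delta - \mathbb{E}[x_k^\delta]$. Iterating this recursion and multiplying by $B^s$ gives $B^sM_{k+1} = -\sum_{j=1}^k \eta_j B^s\Pi_{j+1}^k(B)\zeta_j$. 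Expanding $\|B^sM_{k+1}\|^2$ and taking expectation, the cross term for indices $j_1<j_2$ vanishes after conditioning on $\mathcal{F}_{j_2-1}$, since the first factor is then $\mathcal{F}_{j_2-1}$-measurable while $\mathbb{E}[\zeta_{j_2}|\mathcal{F}_{j_2-1}] = 0$. Hence
\begin{equation*}
  \mathbb{E}[\|B^sM_{k+1}\|^2] = \sum_{j=1}^k \eta_j^2\,\mathbb{E}\bigl[\|B^s\Pi_{j+1}^k(B)\zeta_j\|^2\bigr].
\end{equation*}

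The main obstacle is passing from each summand to the stated bound, because the conditional second moment is a weighted sum over the row index $i$ that does not obviously factorise. Writing $T_j := B^s\Pi_{j+1}^k(B)$ and $r_j^i := (a_i,x_j^\delta)-y_i^\delta$, the variance$\leq$second-moment inequality together with uniform sampling reduces the task to controlling $\tfrac{1}{n}\sum_{i=1}^n (r_j^i)^2\,\|T_ja_i\|^2$. I plan to recognise this quantity as $\tfrac{1}{n}\|T_jA^t\,\mathrm{diag}(r_j^1,\ldots,r_j^n)\|_F^2$ and then apply the submultiplicative bound $\|XD\|_F \leq \|X\|_{\mathrm{op}}\|D\|_F$, combined with
\begin{equation*}
  \|T_jA^t\|_{\mathrm{op}}^2 \;=\; n\,\|T_jB^{1/2}\|_{\mathrm{op}}^2 \;=\; n\,\|B^{s+\frac12}\Pi_{j+1}^k(B)\|^2,
\end{equation*}
which uses $A^tA = nB$ together with the fact that $B^s$ and $\Pi_{j+1}^k(B)$ are commuting functions of $B$. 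This yields the pointwise bound $\mathbb{E}[\|T_j\zeta_j\|^2|\mathcal{F}_{j-1}] \leq \|B^{s+\frac12}\Pi_{j+1}^k(B)\|^2\,\|Ax_j^\delta-y^\delta\|^2$; taking full expectation and inserting into the sum above delivers the claimed inequality.
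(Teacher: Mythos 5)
Your proposal is correct and follows essentially the same route as the paper: unroll the recursion for the centered iterate $x_k^\delta-\mathbb{E}[x_k^\delta]$, kill the cross terms by the martingale-difference property, and bound each one-step noise term by $\|B^{s+\frac12}\Pi_{j+1}^k(B)\|^2\,\mathbb{E}[\|Ax_j^\delta-y^\delta\|^2]$ using $\|T_jA^t\|^2=n\|T_jB^{1/2}\|^2$. The only cosmetic difference is that the paper factors the noise as $M_j=A^tN_j$ and applies variance-$\leq$-second-moment to $N_j$, whereas you apply it to $T_jg_j$ and use the Frobenius-norm identity; both yield the identical constant.
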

\begin{proof}
Let $z_k=x_k^\delta-\mathbb{E}[x_k^\delta]$. By the definition of the iterate $x_k^\delta$ in \eqref{eqn:iter-noisy}, we have
$\E[x_{k+1}^\delta] = \E[x_k^\delta] - \eta_k(B\E[x_k^\delta]-\bar A^ty^\delta)$, and thus $z_k$ satisfies
\begin{equation*}
  z_{k+1} = z_k - \eta_k[((a_{i_k},x_k^\delta)-y_{i_k}^\delta)a_{i_k}-(B\mathbb{E}[x_k^\delta]-\bar A^ty^\delta)],
\end{equation*}
with  $z_1=0$. Upon rewriting, $z_k$ satisfies
\begin{equation}\label{eqn:iter-rand}
  z_{k+1} = (I-\eta_k B)z_k + \eta_k M_k,
\end{equation}
where the iteration noise $M_k$ is defined by
\begin{equation*}
  M_k = (Bx_k^\delta-\bar A^ty^\delta)-((a_{i_k},x_k^\delta)-y_{i_k}^\delta)a_{i_k}.
\end{equation*}
Since $x_j^\delta$ is measurable with respect to $\mathcal{F}_{j-1}$, $\mathbb{E}[M_j|\mathcal{F}_{j-1}] = 0$, and thus $\E[M_j]=0$. Further,
for $j\neq \ell$, $M_j$ and $M_\ell$ satisfy
\begin{equation}\label{eqn:indep}
  \mathbb{E}[(M_j,M_\ell)] = 0,\quad \forall j\neq \ell.
\end{equation}
Indeed, for $j<\ell$, we have $\mathbb{E}[(M_j,M_\ell)|\mathcal{F}_{\ell-1}] = (M_j,\mathbb{E}[M_\ell|
\mathcal{F}_{\ell-1}]) = 0$, since $M_j$ is measurable with respect to $\mathcal{F}_{\ell-1}$. Then
taking full expectation yields \eqref{eqn:indep}. Applying the recursion \eqref{eqn:iter-rand} repeatedly gives
\begin{equation*}
  z_{k+1} = \sum_{j=1}^{k} \eta_j \Pi_{j+1}^k(B) M_j.
\end{equation*}
Then it follows from \eqref{eqn:indep} that
\begin{align*}
 \mathbb{E}[\|B^sz_{k+1}\|^2] & = \sum_{j=1}^{k}\sum_{\ell=1}^{k}  \eta_j\eta_\ell \mathbb{E}[(B^s\Pi_{j+1}^k(B) M_j,B^s\Pi_{\ell+1}^k(B) M_\ell)] = \sum_{j=1}^{k}\eta_j^2\mathbb{E}[\|B^s\Pi_{j+1}^k(B) M_j\|^2].
\end{align*}
Since $a_i = A^te_i$ (with $e_i$ being the $i$th Cartesian vector), we have (with $\bar y^\delta= n^{-1}y^\delta$)
\begin{align*}
    M_j &= A^t(\bar A x_j^\delta-\bar y^\delta)-((a_{i_j},x_j^\delta)-y_{i_j}^\delta)A^{t}e_{i_j}\\
   &= A^t[(\bar Ax_j^\delta-\bar y^\delta)-((a_{i_j},x_j^\delta)-y_{i_j}^\delta)e_{i_j}]:=A^tN_j.
\end{align*}
This and the identity $\|B^s\Pi_{j+1}^k(B)A^t\|^2=n\|B^s\Pi_{j+1}^k(B)B^\frac{1}{2}\|^2$ yield
\begin{align*}
\mathbb{E}[\|B^s\Pi_{j+1}^k(B) M_j\|^2] & \leq \|B^s \Pi_{j+1}^k(B)A^t\|^2 \mathbb{E}[\|N_j\|^2]=\|B^{s+\frac12}\Pi_{j+1}^k(B)\|^2 \mathbb{E}[n\|N_j\|^2].
\end{align*}
By the measurability of $x_j^\delta$ with respect to $\mathcal{F}_{j-1}$,
we can bound $\mathbb{E}[\|N_j\|^2]$ by
\begin{align*}
 \mathbb{E}[\|N_j\|^2|\mathcal{F}_{j-1}] & = \mathbb{E}[\|(\bar Ax_j^\delta-\bar y^\delta)-((a_{i_j},x_j^\delta)-y_{i_j}^\delta)e_{i_j}\|^2|\mathcal{F}_{j-1}]\\
  & \leq \sum_{i=1}^nn^{-1}\|((a_{i},x_j^\delta)-y_i^\delta)e_i\|^2 = n^{-1}\|Ax_j^\delta-y^\delta\|^2,
\end{align*}
where the inequality is due to the identity $\E[((a_{i_j},x_j^\delta)-y_{i_j}^\delta)e_{i_j}|\mathcal{F}_{j-1}]
=\bar A x_j - \bar y^\delta$ and bias-variance decomposition. Thus, by taking full expectation, we obtain
\begin{align*}
   \mathbb{E}[\|N_j\|^2] \leq n^{-1}\mathbb{E}[\|Ax_j^\delta-y^\delta\|^2].
\end{align*}
Combining the preceding bounds yields the desired assertion.
\end{proof}

Last, we state a bound on the mean squared residual $\E[\|Ax_k^\delta-y^\delta\|^2]$. The proof relies
essentially on Theorem \ref{thm:err-var1} with $s=\frac12$ and Lemma \ref{lem:iter-est}. Together with Theorem
\ref{thm:err-var1} with $s=0$, it gives a bound on the stochastic error, which is crucial for
analyzing regularizing property of SGD.
\begin{theorem}\label{thm:res-noisy}
Let Assumption \ref{ass:eta} and condition \eqref{eqn:source} be fulfilled. Then, there holds
\begin{equation}\label{eqn:res-est-1}
  \E[\|Ax_{k+1}^\delta-y^\delta\|^2] \leq c_\alpha k^{-\min(\alpha,\min(1,2p)(1-\alpha))}\ln k + c_\alpha'\delta^2\max(1,\ln k)^2,
\end{equation}
where the constants $c_\alpha$ and $c'_\alpha$ depend on $\alpha$, $p$, $\|w\|$, $\|Ax_1-y^\delta\|$ and $\|A\|$.
\end{theorem}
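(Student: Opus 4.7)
The plan is to start from the bias--variance splitting
\[
  \E[\|Ax_{k+1}^\delta-y^\delta\|^2] = \|A\E[x_{k+1}^\delta]-y^\delta\|^2 + \E[\|A(x_{k+1}^\delta-\E[x_{k+1}^\delta])\|^2],
\]
and to control each piece by invoking the previously established estimates at the weight $s=\tfrac12$. Throughout I will use the elementary identity $\|Av\|^2 = n\|B^{1/2}v\|^2$, which follows from $B=n^{-1}A^tA$ and is exactly what converts $A$-weighted residual estimates into $B^{1/2}$-weighted error estimates on which Theorems \ref{thm:err-approx}, \ref{thm:err-prop}, and \ref{thm:err-var1} act.

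For the deterministic part I would write
\[
  A\E[x_{k+1}^\delta]-y^\delta = A\E[e_{k+1}] - A\E[x_{k+1}-x_{k+1}^\delta] - \xi
\]
(using $Ax^\dag = y^\dag = y^\delta-\xi$) and bound the three contributions by $\|\xi\|^2=\delta^2$ together with Theorem \ref{thm:err-approx} at $s=\tfrac12$, yielding $n\|B^{1/2}\E[e_{k+1}]\|^2 \lesssim k^{-(2p+1)(1-\alpha)}$, and Theorem \ref{thm:err-prop} at the boundary case $s=\tfrac12$ (so $r=1$), yielding $n\|B^{1/2}\E[x_{k+1}-x_{k+1}^\delta]\|^2 \lesssim \delta^2\ln^2 k$. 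For the stochastic part, Theorem \ref{thm:err-var1} with $s=\tfrac12$ (combined with the same identity) gives
\[
  \E[\|A(x_{k+1}^\delta-\E[x_{k+1}^\delta])\|^2] \le n\sum_{j=1}^{k}\eta_j^2\bigl\|B\,\Pi_{j+1}^k(B)\bigr\|^2\E[\|Ax_j^\delta-y^\delta\|^2],
\]
with the kernel $\|B\,\Pi_{j+1}^k(B)\|$ controlled by Lemma \ref{lem:kernel} through $1/\sum_{i=j+1}^k\eta_i$. Putting everything together produces a recursive inequality for $R_k:=\E[\|Ax_k^\delta-y^\delta\|^2]$ of the form
\[
  R_{k+1} \le c_1 k^{-(2p+1)(1-\alpha)} + c_2\delta^2\ln^2 k + \sum_{j=1}^{k}K(j,k)\,R_j,
\]
where $K(j,k)\asymp \eta_j^2/\bigl(\sum_{i=j+1}^k\eta_i\bigr)^2$.

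The main obstacle will be resolving this recursion with the sharp exponent claimed in the theorem. I would proceed by induction on $k$, postulating the ansatz $R_k \le \tilde c\,k^{-\gamma}\ln k + \tilde c'\delta^2\max(1,\ln k)^2$ with $\gamma = \min(\alpha,\min(1,2p)(1-\alpha))$, substituting into the sum on the right, and checking that the induction hypothesis propagates for suitable constants. The key auxiliary computation is the evaluation (via integral approximation) of the sums $\sum_{j=1}^{k}j^{-2\alpha}(k^{1-\alpha}-j^{1-\alpha})^{-2}j^{-\gamma}\ln j$ and of the corresponding sums without the $j^{-\gamma}\ln j$ factor; this is exactly the content of Lemma \ref{lem:iter-est}, to be used as a black box here. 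The saturation exponent $\alpha$ arises because $\eta_j^2\|B\,\Pi_{j+1}^k(B)\|^2$ cannot decay faster than $j^{-2\alpha}k^{-2(1-\alpha)}$ on the bulk of the range, and it is precisely the competition between $\alpha$ and $\min(1,2p)(1-\alpha)$ in the induction step that produces the stated minimum. I anticipate the most delicate bookkeeping to be the joint propagation of the deterministic decay term and the noise term through the recursion, and to ensure the extra logarithmic factor (rather than a polynomial correction) at the critical exponents.
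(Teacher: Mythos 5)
Your overall architecture matches the paper's proof: bias--variance splitting of the expected residual, the identity $\|Av\|^2=n\|B^{1/2}v\|^2$, Theorems \ref{thm:err-approx} and \ref{thm:err-prop} at the weight $s=\tfrac12$ for the deterministic part (the paper actually bounds the approximation term with $s=0$ and a factor $\|A\|^2$, giving $k^{-2p(1-\alpha)}$; your sharper $k^{-(2p+1)(1-\alpha)}$ is also fine, modulo an explicit $n$ in the constant), and Theorem \ref{thm:err-var1} with $s=\tfrac12$ to set up a recursion for $R_k=\E[\|Ax_k^\delta-y^\delta\|^2]$.

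The gap is in the stochastic term, precisely where you delegate to Lemma \ref{lem:iter-est} as a black box. First, the kernel you claim, $K(j,k)\asymp\eta_j^2/(\sum_{i=j+1}^k\eta_i)^2$, is not what Theorem \ref{thm:err-var1} plus Lemma \ref{lem:kernel} produces: the true bound is $n\eta_j^2\|B\Pi_{j+1}^k(B)\|^2$, and near the diagonal the bound of Lemma \ref{lem:kernel} degenerates (for $j=k$ one has $\Pi_{k+1}^k(B)=I$, and for $j=k-1$ your expression is of size $n\eta_{k-1}^2/\eta_k^2\asymp n$), so without capping by $n\eta_j^2\|B\|^2$ the near-diagonal block of your kernel sums to $O(n)$ rather than $o(1)$ and no decay (indeed not even contraction) can be extracted from the recursion; this near-diagonal range is exactly what generates the saturation exponent $\alpha$, which your bulk heuristic $j^{-2\alpha}k^{-2(1-\alpha)}$ does not capture. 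Second, Lemma \ref{lem:iter-est} simply does not apply to a squared-denominator kernel: it is stated for $c_1\sum_{j=1}^{k-1}\eta_j^2/\sum_{i=j+1}^k\eta_i\,a_j + c_2k^{-2\alpha}a_k + b_k$, with the first power of $\sum_{i=j+1}^k\eta_i$. The paper lands on exactly that form by the splitting $\|B\Pi_{j+1}^k(B)\|^2\le\|B\|\,\|B^{1/2}\Pi_{j+1}^k(B)\|^2$, applying Lemma \ref{lem:kernel} only to the $B^{1/2}$ factor (yielding $(2e\sum_{i=j+1}^k\eta_i)^{-1}$), absorbing $n\|B\|\le\|A\|^2$ to remove the dimension factor, and treating the $j=k$ term separately via $c_0\|B\|\le1$; with your kernel you would have to re-derive an analogue of Lemma \ref{lem:iter-est} from scratch (Lemma \ref{lem:basicest2} with $r=2$, $\beta=0$ confirms the top block is only $O(1)$). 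A further detail: Lemma \ref{lem:iter-est} assumes $b_j$ nondecreasing, while your forcing term contains the decaying piece $k^{-(2p+1)(1-\alpha)}$, so obtaining the stated rate $k^{-\min(\alpha,\min(1,2p)(1-\alpha))}\ln k$ requires the bootstrapping remark following that lemma, which your ansatz-induction would need to reproduce explicitly.
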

\begin{proof}
Let $r_k=\E[\|Ax_{k}^\delta-y^\delta\|^2]$ be the mean squared residual at iteration $k$. By bias-variance
decomposition and the triangle inequality, we have
\begin{align*}
     r_{k+1} &= \|A\E[x_{k+1}^\delta]-y^\delta\|^2+\E[\|A(x_{k+1}^\delta-E[x_{k+1}^\delta])\|^2] \\
   &\leq  4\|A(\E[x_{k+1}]-x^\dag)\|^2 + 4\|A\E[x_{k+1}^\delta-x_{k+1}]\|^2+\E[\|A(x_{k+1}^\delta - \E[x_{k+1}^\delta])\|^2]+2\delta^2\\
    & :=4{\rm I}_1+4{\rm I}_2 + {\rm I}_3 + {\rm I}_4.
\end{align*}
With $c_p=(\frac{p(1-\alpha)}{c_0e(2^{1-\alpha}-1)})^{2p}\|A\|^2\|w\|^2$ and $c_\alpha=(\frac{2^\alpha(2-\alpha)}{e(1-\alpha)}+1)^2$,
Theorems \ref{thm:err-approx} and \ref{thm:err-prop} immediately imply
\begin{align*}
{\rm I}_1 \leq c_pk^{-2p(1-\alpha)} \quad \mbox{and}\quad
{\rm I}_2  \leq c_\alpha\delta^2\max(1,\ln k)^2.
\end{align*}
Next, we bound the variance ${\rm I}_3$ by
Theorem \ref{thm:err-var1} with $s=1/2$ and Lemma \ref{lem:kernel}:
\begin{align}
    {\rm I}_3
    & \leq  n\sum_{j=1}^{k} \eta_j^2\|\Pi_{j+1}^k(B)B\|^2 r_j
     \leq c_1\sum_{j=1}^{k-1}\frac{\eta_j^2}{\sum_{i=j+1}^k\eta_i}r_j + c_2 k^{-2\alpha}r_k,\label{eqn:iter-resnorm}
\end{align}
with $c_1=e^{-1}\|A\|^2$ and $c_2=c_0\|A\|^2$. Combining these estimates yields (with
$c_3=4c_p$ and $c_4=4c_\alpha+2$)
\begin{align}\label{eqn:var-recur}
    r_{k+1}  \leq c_1\sum_{j=1}^{k-1}\frac{\eta_j^2}{\sum_{i=j+1}^k\eta_i} r_j+ c_2k^{-2\alpha}r_k + c_3k^{-2p(1-\alpha)} + c_4\delta^2\max(1,\ln k)^2.
\end{align}
This and Lemma \ref{lem:iter-est} imply the desired estimate.
\end{proof}

\begin{remark}\label{rmk:res-exact}
Due to the presence of the factor
$\ln k$ in Theorem \ref{thm:res-noisy}, the upper bound is not uniform in $k$ for noisy data, but the growth is very
mild. For exact data $y^\dag$, there holds:
\begin{equation*}
  \E[\|Ax_{k+1} - y^\dag\|^2] \leq c k^{-\min(\alpha,\min(1,2p)(1-\alpha))}\ln k,
\end{equation*}
where the constant $c$ depends on $\alpha$, $p$, $\|Ax_1-y^\dag\|$ and $\|A\|$. The proof also indicates that the
condition $c_0\max_i\|a_i\|^2\leq 1$ in Assumption \ref{ass:eta} may be replaced with $c_0\|B\|\leq 1$.
\end{remark}

\section{Regularizing property}\label{sec:reg}

In this section, we analyze the regularizing property of SGD with early stopping, and prove convergence rates
under \textit{a priori} stopping rule. First, we show the convergence of the SGD iterate $x_k$ for exact data
to the minimum-norm solution $x^\dag$ defined in \eqref{eqn:sol-min}, for any $\alpha\in(0,1)$.
\begin{theorem}\label{thm:conv-ex}
Let Assumption \ref{ass:eta} be fulfilled. Then the SGD iterate $x_k$
converges to the minimum norm solution $x^\dag$ as $k\to\infty$, i.e.,
\begin{equation*}
\lim_{k\to \infty }\mathbb{E}[\|x_k-x^\dag\|^2] = 0.
\end{equation*}
\end{theorem}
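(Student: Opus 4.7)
My plan is to specialize the bias--variance decomposition \eqref{eqn:decomp} to exact data $y^\dag$, so that the middle (propagation) term vanishes identically, giving
\[
\mathbb{E}[\|x_{k+1} - x^\dag\|^2] = \|\mathbb{E}[e_{k+1}]\|^2 + \mathbb{E}[\|x_{k+1} - \mathbb{E}[x_{k+1}]\|^2],
\]
and then to show that each remaining term tends to zero as $k\to\infty$ by invoking the preliminary estimates of Section \ref{sec:basicest}. The key enabling observation is that the minimum-norm characterization \eqref{eqn:sol-min} forces $e_1 = x_1 - x^\dag \in \mathrm{Ker}(A)^\perp = \mathrm{Range}(A^t)$, which in finite dimensions equals $\mathrm{Range}(B^{1/2})$ since $B = n^{-1}A^tA$. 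Hence the source condition \eqref{eqn:source} is automatically satisfied with $p = 1/2$ and some fixed $w$ (namely the preimage of $-e_1$ under $B^{1/2}$ restricted to $\mathrm{Range}(B)$), without imposing any additional smoothness on $x^\dag$; all quantitative bounds of Section \ref{sec:basicest} then become available at $p=1/2$.

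For the bias, I would apply Theorem \ref{thm:err-approx} with $p = 1/2$ and $s = 0$, obtaining $\|\mathbb{E}[e_{k+1}]\| \leq c\, k^{-(1-\alpha)/2}$, which vanishes since $\alpha \in (0,1)$. For the variance, Theorem \ref{thm:err-var1} with $s = 0$ gives
\[
\mathbb{E}[\|x_{k+1} - \mathbb{E}[x_{k+1}]\|^2] \leq \sum_{j=1}^{k} \eta_j^2 \|B^{1/2}\Pi_{j+1}^k(B)\|^2 \, \mathbb{E}[\|Ax_j - y^\dag\|^2];
\]
the residual factor is controlled via Remark \ref{rmk:res-exact} applied with $p = 1/2$, yielding $\mathbb{E}[\|Ax_j - y^\dag\|^2] \leq c\, j^{-\min(\alpha, 1-\alpha)} \ln j$, and Lemma \ref{lem:kernel} controls the kernel factor in the form $\|B^{1/2}\Pi_{j+1}^k(B)\|^2 \leq C / \sum_{i=j+1}^k \eta_i$ (with the $j=k$ endpoint handled separately as an $O(\eta_k^2\|B\|r_k)$ boundary term). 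This reduces the variance essentially to a convolution-type sum of the form $\sum_{j=1}^{k-1} \eta_j^2 (\sum_{i=j+1}^k \eta_i)^{-1} j^{-\min(\alpha, 1-\alpha)} \ln j$.

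The main obstacle is verifying that this convolution sum tends to zero as $k\to\infty$. I would handle it with the split-at-$k/2$ technique already used in the proof of Lemma \ref{lem:iter-est}: on the range $1 \leq j \leq k/2$ the denominator $\sum_{i=j+1}^k \eta_i$ is comparable to $k^{1-\alpha}$, giving a small prefactor, whereas on $k/2 < j \leq k-1$ the residual factor $j^{-\min(\alpha,1-\alpha)}\ln j$ is itself small and the remaining weights sum to at most a logarithmic factor times $k^{-\alpha}$. A routine case analysis in $\alpha$ (splitting on whether $\alpha \leq 1/2$ or $\alpha > 1/2$) shows that both pieces vanish as $k\to\infty$; the boundary term $\eta_k^2\|B\|r_k$ vanishes trivially. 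No new ideas beyond those developed in Section \ref{sec:basicest} are required to complete the argument.
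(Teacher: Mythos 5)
Your proposal is correct, and the overall skeleton (bias--variance decomposition for exact data, Theorem \ref{thm:err-var1} with $s=0$, Lemma \ref{lem:kernel}, and a convolution-sum estimate of the type in Lemma \ref{lem:basicest2}) coincides with the paper's. The genuine difference is in how the bias term is killed and how quantitative the argument is. You observe that in finite dimensions the minimum-norm characterization gives $x^\dag-x_1\in\mathrm{range}(A^t)=\mathrm{range}(B)=\mathrm{range}(B^{1/2})$, so the source condition \eqref{eqn:source} holds automatically with $p=\tfrac12$; Theorem \ref{thm:err-approx} then yields the explicit rate $\|\mathbb{E}[e_{k+1}]\|\leq ck^{-(1-\alpha)/2}$, and Remark \ref{rmk:res-exact} (legitimately invoked, since you have verified its hypothesis) gives a decaying residual bound that you feed into the split-at-$k/2$ estimate; your case check in $\alpha$ is sound, and the $j=k$ boundary term and the $j=1$ term (where one should read $\max(1,\ln j)$) are harmless. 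The paper instead treats the bias purely qualitatively: $\mathbb{E}[e_{k+1}]=\Pi_1^k(B)e_1$ is shown to vanish via pointwise convergence of $r_k(\lambda)=\prod_{j\le k}(1-\eta_j\lambda)$ on the spectrum and the argument of Theorem 4.1 in \cite{EnglHankeNeubauer:1996}, with the limit identified as $x^\dag$ through $x_k-x_1\in\mathrm{range}(A^t)$, and for the variance it only needs uniform boundedness of $\mathbb{E}[\|A(x_j-x^\dag)\|^2]$ together with Lemma \ref{lem:basicest2}. Your route buys explicit convergence rates for both terms with no new machinery, but it is intrinsically finite-dimensional: the constant $\|w\|$ in the automatic $p=\tfrac12$ source condition blows up with the inverse of the smallest nonzero singular value, and in an infinite-dimensional setting $\mathrm{range}(A^*)$ need not be closed, so the trick fails there, whereas the paper's spectral argument (and the boundedness-only variance estimate) is the one that survives that generalization.
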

\begin{proof}
The proof employs the decomposition \eqref{eqn:decomp}, and bounds separately the mean and variance.
It follows from \eqref{eqn:iter-mean} that the mean $\E[e_k]$ satisfies $\E[e_{k+1}] = \Pi_1^k(B)e_1.$
The term $\|\Pi_1^k(B)e_1\|$ converges to zero as $k\to\infty$. Specifically, we define
a function $r_k(\lambda): (0,\|B\|]\to [0,1)$ by $r_k(\lambda) =  \prod_{j=1}^k(1-\eta_k\lambda)$.
By Assumption \ref{ass:eta}, $c_0\max_i\|a_i\|^2\leq 1$, $r_k(\lambda)$ is
uniformly bounded. By the inequality $1-x\leq e^{-x}$ for $x\geq0$,
$r_k(\lambda)\leq e^{-\lambda\sum_{j=1}^k\eta_j}$. This and the identity $\lim_{k\to\infty}
\sum_{j=1}^k\eta_j=\infty$ imply that for any $\lambda>0$, $\lim_{k\to\infty}
r_k(\lambda)=0$. Hence, $r_k(\lambda)$ converges to zero pointwise, and
the argument for Theorem 4.1 of \cite{EnglHankeNeubauer:1996} yields
$\lim_{k\to\infty}\|\E[e_{k}]\|=0.$
Next, we bound the variance $\E[\|x_{k+1}-\E[x_{k+1}]\|^2]$. By Theorem
\ref{thm:err-var1} (with $s=0$) and Lemma \ref{lem:kernel} (with $p=\frac{1}{2}$),
\begin{align*}
  \mathbb{E}[\|x_{k+1}-\mathbb{E}[x_{k+1}]\|^2] & \leq \sum_{j=1}^k \eta_j^2\|\Pi_{j+1}^k(B)B^\frac{1}{2}\|^2 \mathbb{E}[\|A(x_j-x^\dag)\|^2]\\
    & \leq \sup_j\E[\|A(x_j-x^\dag)\|^2] \Big((2e)^{-1}\sum_{j=1}^{k-1} \frac{\eta_j^{2}}{\sum_{i=j+1}^k\eta_i} + c_0k^{-2\alpha}\Big).
\end{align*}
By Theorem \ref{thm:res-noisy} (and Remark \ref{rmk:res-exact}), the sequence $\{\E[\|A(x_j-x^\dag)\|^2]\}_{j=1}^\infty$ is
uniformly bounded. Then Lemma \ref{lem:basicest2} implies
\begin{equation*}
  \lim_{k\to \infty}\E[\|x_{k}-\E[x_k]\|^2]=0.
\end{equation*}
The desired assertion follows from bias variance decomposition by
\begin{equation*}
  \lim_{k\to\infty}\E[\|x_k-x^\dag\|^2] \leq \lim_{k\to\infty}\|\E[x_k]-x^\dag\|^2 + \lim_{k\to\infty}\E[\|x_k-\E[x_k]\|^2]=0.
\end{equation*}
It is well known that the minimum norm solution is characterized by $x^\dag-x_1\in \mathrm{range}(A^t)$.
By the construction of the SGD iterate $x_k$, $x_k-x_1$ always belongs to ${\rm range}(A^t)$,
and thus the limit is the unique minimum-norm solution $x^\dag$.
\end{proof}

Next we analyze the convergence of the SGD iterate $x_{k}^\delta$ for noisy data $y^\delta$ as $\delta\to0$.
To this end, we need a bound on the variance $\E[\|x_{k}^\delta-\E[x_{k}^\delta]\|^2]$ of the iterate $x_k$.
\begin{lemma}\label{lem:bdd-var}
Let Assumption \ref{ass:eta} be fulfilled.
Under the source condition \eqref{eqn:source}, there holds
\begin{align*}
  \mathbb{E}[\|x_{k+1}^\delta-\mathbb{E}[x_{k+1}^\delta]\|^2] \leq ck^{-\min(1-\alpha,\alpha+2p(1-\alpha),2\alpha)}\ln^2 k + c'\delta^2,
\end{align*}
where the constants $c$ and $c'$ depend on $\alpha$, $p$, $\|w\|$, $\|Ax_1-y^\delta\|$ and $\|A\|$.
\end{lemma}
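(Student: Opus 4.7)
The plan is to decompose the variance using Theorem \ref{thm:err-var1} with $s=0$, which yields
\[
  \mathbb{E}[\|x_{k+1}^\delta-\mathbb{E}[x_{k+1}^\delta]\|^2] \le \sum_{j=1}^k \eta_j^2\,\|B^{1/2}\Pi_{j+1}^k(B)\|^2\, r_j,
\]
where $r_j=\mathbb{E}[\|Ax_j^\delta-y^\delta\|^2]$. I would treat the last summand $j=k$ separately, bounding $\|B^{1/2}\|^2 \le c_0^{-1}$ by Assumption \ref{ass:eta} so that this boundary term contributes $O(k^{-2\alpha}) r_k$. For $j<k$, I would invoke Lemma \ref{lem:kernel} with $p=1/2$ to obtain $\|B^{1/2}\Pi_{j+1}^k(B)\|^2 \le (2e)^{-1}\big(\sum_{i=j+1}^k \eta_i\big)^{-1}$.

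Next I would insert the residual bound from Theorem \ref{thm:res-noisy}, $r_j \le c_\alpha j^{-\beta}\ln j + c'_\alpha\delta^2\max(1,\ln j)^2$ with $\beta := \min(\alpha,\min(1,2p)(1-\alpha))$. This splits the variance into a data-noise-free part
\[
  S_1(k) = \sum_{j=1}^{k-1}\frac{\eta_j^2}{\sum_{i=j+1}^k \eta_i}\, j^{-\beta}\ln j + c_0 k^{-2\alpha}\cdot k^{-\beta}\ln k,
\]
and a $\delta^2$-proportional part
\[
  S_2(k) = \delta^2\biggl(\sum_{j=1}^{k-1}\frac{\eta_j^2}{\sum_{i=j+1}^k \eta_i}\max(1,\ln j)^2 + c_0 k^{-2\alpha}\max(1,\ln k)^2\biggr).
\]
Using $\eta_j = c_0 j^{-\alpha}$ and the elementary estimate $\sum_{i=j+1}^k \eta_i \asymp c_0(1-\alpha)^{-1}(k^{1-\alpha}-j^{1-\alpha})$, a standard integral comparison (of the kind encoded in Lemma \ref{lem:basicest2}) shows $S_2(k) = O(\delta^2)$ uniformly in $k$, which accounts for the second term in the claimed bound.

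For $S_1(k)$ I would split the sum at $j=k/2$. On $j\le k/2$ the denominator is $\asymp k^{1-\alpha}$, producing a contribution $\lesssim k^{\alpha-1}\sum_{j\le k/2} j^{-2\alpha-\beta}\ln j$; depending on whether $2\alpha+\beta$ is greater than, equal to, or less than $1$, this yields either $O(k^{-(1-\alpha)})$ or $O(k^{-(\alpha+\beta)}\ln^2 k)$. On $k/2<j<k$ a change of variables $j = k-u$ gives a contribution $\lesssim k^{-(\alpha+\beta)}\ln^2 k$. Combined with the boundary term $O(k^{-(2\alpha+\beta)}\ln k)$, the worst of these three exponents is exactly $\min(1-\alpha,\alpha+\beta,2\alpha+\beta)$; unfolding $\beta$ and dropping the dominated $2\alpha+\beta$ produces $\min(1-\alpha,\alpha+2p(1-\alpha),2\alpha)$ as claimed.

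The main obstacle is the third step: the two-piece splitting of $S_1$ and, in particular, the case analysis that shows the three candidate exponents $1-\alpha$, $\alpha+2p(1-\alpha)$, and $2\alpha$ all genuinely arise (and are not artifacts of a crude bound). Keeping track of the logarithmic factors from the residual estimate and from the near-diagonal part of the kernel sum is delicate; I expect to absorb everything into a single $\ln^2 k$ factor by using $\ln j \le \ln k$ and the standard harmonic-sum bound $\sum_{u\le k} u^{-1} \le 1+\ln k$.
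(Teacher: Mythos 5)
Your proposal is correct and follows essentially the same route as the paper: Theorem \ref{thm:err-var1} with $s=0$, the kernel bound of Lemma \ref{lem:kernel} (with the boundary term $\eta_k^2\|B^{1/2}\|^2\leq c_0k^{-2\alpha}$ from $c_0\|B\|\leq 1$), insertion of the residual estimate of Theorem \ref{thm:res-noisy}, and then the summation estimates of Lemma \ref{lem:basicest2} — your explicit split at $j=[k/2]$ and the case analysis in $2\alpha+\beta$ are exactly what that lemma encodes, and your exponent bookkeeping $\min(1-\alpha,\alpha+\beta)=\min(1-\alpha,2\alpha,\alpha+\min(1,2p)(1-\alpha))$ reproduces the stated rate, with the logarithms absorbed into $\ln^2k$ as in the paper.
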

\begin{proof}
Let $r_k=\mathbb{E}[\|Ax_k^\delta-y^\delta\|^2]$ be the expected squared residual at the $k$th iteration. Then
Theorem \ref{thm:err-var1} with $s=0$ and Lemma \ref{lem:kernel} with $p=\frac12$ imply (with $c_1=(2e)^{-1}$)
\begin{align*}
  \mathbb{E}[\|x_{k+1}^\delta-\mathbb{E}[x_{k+1}^\delta]\|^2] & \leq \sum_{j=1}^{k-1} \eta_j^2\|\Pi_{j+1}^k(B)B^\frac{1}{2}\|^2 r_j +\eta_k^2\|B^\frac{1}{2}\|^2r_k\\
   & 
     \leq c_1\sum_{j=1}^{k-1}\frac{\eta_j^2}{\sum_{i=j+1}^k\eta_i}r_j
    +c_0k^{-2\alpha}r_k.
\end{align*}
where the last step is due to $c_0\|B\|\leq 1$ from Assumption \ref{ass:eta}. Now
Theorem \ref{thm:res-noisy} gives
\begin{equation*}
  r_{k+1} \leq c_\alpha k^{-\min(\alpha,\min(1,2p)(1-\alpha))}\ln k + c_\alpha'\delta^2 \max(\ln k,1)^2.
\end{equation*}
The last two inequalities and Lemma \ref{lem:basicest2} imply the desired bound.
\end{proof}

Now we can prove the regularizing property of SGD in Theorem \ref{thm:consistency}.
\begin{proof}[Proof of Theorem \ref{thm:consistency}]
We appeal to the bias-variance decomposition \eqref{eqn:decomp}:
\begin{align*}
  \E[\|x_{k(\delta)}^\delta - x^\dag\|^2] \leq 2\|\E[x_{k(\delta)}^\delta-x_{k(\delta)}]\|^2 + 2\|\E[x_{k(\delta)}]-x^\dag\|^2 + \E[\|x_{k(\delta)}^\delta - \E[x_{k(\delta)}^\delta]\|^2].
\end{align*}
By the proof of Theorem \ref{thm:conv-ex} and condition \eqref{eqn:cond-k}, we have
\begin{equation*}
\lim_{\delta\to0^+}\|\E[x_{k(\delta)}]-x^\dag\|=\lim_{k\to\infty}\|\E[x_k]-x^\dag\|=0.
\end{equation*}
Thus, it suffices to analyze the errors $\|\E[x_{k(\delta)}^\delta-x_{k(\delta)}]\|^2$ and
$\E[\|x_{k(\delta)}^\delta-\E[x_{k(\delta)}^\delta]\|^2]$. By Theorem
\ref{thm:err-prop} and the choice of $k(\delta)$ in condition \eqref{eqn:cond-k}, there holds
\begin{equation*}
  \lim_{\delta\to0^+} \|\E[x_{k(\delta)}-x_{k(\delta)}^\delta]\| =0.
\end{equation*}
Last, by Lemma \ref{lem:bdd-var} and condition \eqref{eqn:cond-k}, we can
bound the variance $\E[\|x_{k(\delta)}^\delta-\E[x_{k(\delta)}^\delta]\|^2]$ by
\begin{equation*}
  \lim_{\delta\to 0^+}\E[\|x_{k(\delta)}^\delta-\E[x_{k(\delta)}^\delta]\|^2] = 0.
\end{equation*}
Combining the last three estimates completes the proof.
\end{proof}

\begin{remark}
The consistency condition \eqref{eqn:cond-k} in Theorem \ref{thm:consistency} requires $\alpha\in (0,1)$.
The constant step size, i.e., $\alpha=0$, is not covered by the theory, for which the bootstrapping
argument does not work.
\end{remark}

Last, we give the proof of Theorem \ref{thm:err-total} on the convergence rate of SGD under a priori stopping rule.
\begin{proof}[Proof of Theorem \ref{thm:err-total}]
By bias-variance decomposition, we have
\begin{align*}
  \mathbb{E}[\|x_{k+1}^\delta - x^\dag\|^2] = \E[\|x_{k+1}^\delta - \E [x_{k+1}^\delta]\|^2] + \|\E[x_{k+1}^\delta]-x^\dag\|^2.
\end{align*}
It follows from Lemma \ref{lem:bdd-var} that
\begin{align*}
  \mathbb{E}[\|x_{k+1}^\delta-\mathbb{E}[x_{k+1}^\delta]\|^2] \leq ck^{-\min(1-\alpha,2p(1-\alpha)+\alpha,2\alpha)}\ln^2 k + c'\delta^2.
\end{align*}
Meanwhile, by the triangle inequality and Theorems \ref{thm:err-approx} and \ref{thm:err-prop},
\begin{align*}
  \|\E[x_{k+1}^\delta]-x^\dag\|^2 & \leq 2c_p^2k^{-2p(1-\alpha)} + 2c_\alpha^2 k^{1-\alpha}\bar\delta^2.
\end{align*}
These  two estimates together give the desired rate.
\end{proof}

\begin{remark}
The \textit{a priori} parameter choice in Theorem \ref{thm:err-total} requires a knowledge of the regularity index $p$,
and thus is infeasible in practice. The popular discrepancy principle also does not work
directly due to expensive residual evaluation, and further, it induces complex dependence between
the iterates, which requires different techniques for the analysis. Thus, it is of much
interest to develop purely data-driven rules without residual evaluation while automatically adapting to
the unknown solution regularity, e.g., quasi-optimality criterion and balancing principle \cite{JinLorenz:2010,
PereverzevSchock:2005}.
\end{remark}

\section{Preasymptotic convergence}\label{app:preasympt}

In this part, we present the proofs of Theorems \ref{thm:asympt-weak-noisy} and \ref{thm:asympt-noisy}
on the preasymptotic weak and strong convergence, respectively. First, we briefly discuss the low-frequency
dominance on the initial error $e_1$ under the source condition \eqref{eqn:source}: if the singular
values $\sigma_i$ of $n^{-\frac12}A$ decay fast, $e_1$ is indeed dominated by $P_\mathcal{L}e_1$, i.e., $\|P_\mathcal{L}e_1\|
\gg\|P_\mathcal{H}e_1\|$. We illustrate this with a simple probabilistic model: the
sourcewise representer $w\in\mathbb{R}^m$ follows the standard Gaussian distribution $\mathcal{N}(0,I_m)$.

\begin{prop}\label{prop:source}
In Condition \eqref{eqn:source}, if $w\sim \mathcal{N}(0,I_m)$, then there hold
\begin{equation*}
  \mathbb{E}[\|P_\mathcal{L}e_1\|^2] = \sum_{i=1}^L\sigma_i^{4p}\quad \mbox{and}\quad \mathbb{E}[\|P_\mathcal{H}e_1\|^2] = \sum_{i=L+1}^r\sigma_i^{4p}.
\end{equation*}
\end{prop}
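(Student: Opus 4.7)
The plan is to diagonalize $B$ using the given SVD of $n^{-1/2}A$, exploit the rotational invariance of the standard Gaussian to reduce to a diagonal computation, and then read off the two norms directly.

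First I would observe that $B = n^{-1}A^tA = V\Sigma^t\Sigma V^t$, so $B$ is simultaneously diagonalized with the right singular vectors of $n^{-1/2}A$ and has eigenvalues $\sigma_i^2$ (for $i=1,\ldots,r$) and zero otherwise. Consequently, via the spectral decomposition, $B^p = V D V^t$ with $D = \mathrm{diag}(\sigma_1^{2p},\ldots,\sigma_r^{2p},0,\ldots,0)$, and the source condition \eqref{eqn:source} gives $-e_1 = x^\dag - x_1 = B^p w = \sum_{i=1}^r \sigma_i^{2p}(v_i^t w)\,v_i$.

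Next I would use rotational invariance: setting $\tilde w = V^t w$, the orthogonality of $V$ combined with $w\sim\mathcal{N}(0,I_m)$ gives $\tilde w\sim\mathcal{N}(0,I_m)$, so in particular the coordinates $\tilde w_i = v_i^t w$ are i.i.d.\ standard normals with $\mathbb{E}[\tilde w_i^2]=1$. Applying the projections, since $\{v_i\}_{i=1}^L$ is an orthonormal basis of $\mathcal{L}$ and $\{v_i\}_{i=L+1}^{\min(n,m)}$ of $\mathcal{H}$, I get
\begin{equation*}
  P_\mathcal{L} e_1 = -\sum_{i=1}^L \sigma_i^{2p}\tilde w_i\,v_i,\qquad P_\mathcal{H} e_1 = -\sum_{i=L+1}^r \sigma_i^{2p}\tilde w_i\,v_i,
\end{equation*}
where in the second sum the terms with $i>r$ vanish since the corresponding eigenvalues are zero.

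Finally, Parseval (applied in the orthonormal basis $\{v_i\}$) yields $\|P_\mathcal{L} e_1\|^2 = \sum_{i=1}^L \sigma_i^{4p}\tilde w_i^2$ and analogously for $P_\mathcal{H}$; taking expectations and using $\mathbb{E}[\tilde w_i^2]=1$ gives the two identities. There is no real obstacle here since everything is a linear-algebra computation once the SVD is in place; the only point that needs a line of justification is why the $v_i^t w$ are i.i.d.\ standard normals, which is exactly the orthogonal invariance of $\mathcal{N}(0,I_m)$.
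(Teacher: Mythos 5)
Your proposal is correct and follows essentially the same route as the paper: expand $e_1=\pm B^pw=V\Sigma^{2p}V^tw$ in the right singular basis, note that the coordinates $(V^tw)_i$ are standard normals by orthogonal invariance, and take expectations of the squared coefficients. The only (immaterial) difference is that you track the sign $e_1=-(x^\dag-x_1)$ and spell out the vanishing of the terms with $i>r$, which the paper leaves implicit.
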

\begin{proof}
Under Condition \eqref{eqn:source}, we have $e_1 = B^p w = V\Sigma^{2p}V^tw$. Thus, we have
\begin{align*}
  \|P_\mathcal{L}e_1\|^2 &= \|\sum_{i=1}^L V_i\sigma_i^{2p}(V^tw)_i\|^2 = \sum_{i=1}^L \sigma_i^{4p}(V^tw)_i^2.
\end{align*}
Since $w\sim \mathcal{N}(0,I_m)$ and the matrix $V$ is orthonormal, $(V^tw)_i\sim \mathcal{N}(0,1)$, and
$\mathbb{E}[(V^tw)_i^2]=1$, from which the assertion on $\mathbb{E}[\|P_\mathcal{L}e_1\|^2]$ follows,
and the other estimate follows similarly.
\end{proof}

\begin{remark}
For polynomially decaying singular values $\sigma_i$, i.e., $\sigma_i=ci^{-\beta}$, $\beta>0$, and if $4p\beta>1$,
simple computation shows that $\mathbb{E}[\|P_{\mathcal{L}}e_1\|^2]  \geq c^4(4p\beta-1)^{-1}(1-(L+1)^{1-4p\beta})$ and
$\mathbb{E}[\|P_{\mathcal{H}}e_1\|^2]  \leq c^4(4p\beta-1)^{-1}(L^{1-4p\beta}-m^{1-4p\beta})$, and thus
\begin{equation*}
  \frac{\mathbb{E}[\|P_{\mathcal{L}}e_1\|^2]}{\mathbb{E}[\|P_{\mathcal{H}}e_1\|^2]}\geq \frac{1-(L+1)^{1-4p\beta}}{L^{1-4p\beta}-m^{1-4p\beta}}.
\end{equation*}
Hence, for a truncation level $L\ll m$ and $4p\beta\gg1$, $\mathbb{E}[\|P_\mathcal{L}e_1\|^2]$ is dominating.
The condition $4p\beta\gg1$ holds for either severely ill-posed problems {\rm(}large $\beta${\rm)} or highly
regular solution {\rm(}large $p${\rm)}.
\end{remark}

Now we give the proof of the preasymptotic weak convergence in Theorem \ref{thm:asympt-weak-noisy}.
\begin{proof}[Proof of Theorem \ref{thm:asympt-weak-noisy}]
By applying $P_\mathcal{L}$ to the SGD iteration \eqref{eqn:iter-noisy}, we have
\begin{equation*}
  P_\mathcal{L}e_{k+1}^\delta = P_\mathcal{L}e_k^\delta -\eta_k(a_{i_k},e_k^\delta)P_\mathcal{L}a_{i_k}+\eta_k\xi_{i_k}P_\mathcal{L}a_{i_k}.
\end{equation*}
By taking conditional expectation with respect to $\mathcal{F}_{k-1}$, since $e_k^\delta = P_\mathcal{L}e_k^\delta
+ P_\mathcal{H}e_k^\delta$, we obtain
\begin{align*}
  \mathbb{E}[P_\mathcal{L}e_{k+1}^\delta|\mathcal{F}_{k-1}] & =  P_\mathcal{L}e_k^\delta - \eta_kn^{-1}\sum_{i=1}^n(a_{i},e_k^\delta)P_\mathcal{L}a_{i}+ \eta_kn^{-1}\sum_{i=1}^n\xi_iP_\mathcal{L}a_i\\
   &= P_\mathcal{L}e_k^\delta-\eta_kP_\mathcal{L}Be_k^\delta +\eta_kP_\mathcal{L}\bar A^t\xi\\
   & = (I-\eta_kP_\mathcal{L}BP_\mathcal{L}) P_\mathcal{L}e_k^\delta - \eta_kP_\mathcal{L}B P_He_k^\delta+\eta_kP_\mathcal{L}\bar A^t \xi.
\end{align*}
By the construction of $P_\mathcal{L}$ and $P_\mathcal{H}$, $P_\mathcal{L}BP_He_k^\delta=0$, and then taking full expectation yields
\begin{align*}
  \mathbb{E}[P_\mathcal{L}e_{k+1}^\delta]
   &= (I-\eta_kP_\mathcal{L}BP_\mathcal{L}) \mathbb{E}[P_\mathcal{L}e_k^\delta]+\eta_kP_\mathcal{L}\bar A^t \xi.
\end{align*}
Then the first assertion follows since $\|\bar A^t\| = n^{-\frac12}\|B\|^\frac{1}{2}\leq n^{-\frac12}c_0^{-\frac12}$,
$\|P_\mathcal{L}\bar A^t\xi\| \leq c_0^{-\frac{1}2}\bar\delta$, and
$\|(I-\eta_kP_\mathcal{L}BP_\mathcal{L})P_\mathcal{L}e_k\|\geq (1-\eta_k\sigma_L^2)\|P_\mathcal{L}e_k\|$.
Next, appealing again to the SGD iteration \eqref{eqn:iter-noisy} gives
\begin{equation*}
  P_\mathcal{H}e_{k+1}^\delta = P_\mathcal{H}e_k^\delta -\eta_k(a_{i_k},e_k^\delta)P_\mathcal{H}a_{i_k} + \eta_k \xi_{i_k}P_\mathcal{H}a_{i_k}.
\end{equation*}
Thus the conditional expectation $\mathbb{E}[P_\mathcal{H}e_{k+1}|\mathcal{F}_{k-1}]$ is given by
\begin{align*}
  \mathbb{E}[P_\mathcal{H}e_{k+1}^\delta|\mathcal{F}_{k-1}] & = P_\mathcal{H}e_k^\delta - \eta_k n^{-1}
    \sum_{i=1}^n(a_i,e_k^\delta)P_\mathcal{H}a_i +\eta_kn^{-1}\sum_{i=1}^n\xi_iP_\mathcal{H}a_i\\
    & = (I-\eta_kP_\mathcal{H}BP_\mathcal{H})P_\mathcal{H}e_k^\delta +\eta_kP_\mathcal{H}\bar A^t\xi.
\end{align*}
Then, taking full expectation and appealing to the triangle inequality yield the second estimate.
\end{proof}

\begin{remark}
For exact data $y^\dag$, we obtain the following simplified expressions:
\begin{align*}
  \|\mathbb{E}[P_\mathcal{L}e_{k+1}]\| \leq (1-\eta_k\sigma_L^2) \|\mathbb{E}[P_\mathcal{L}e_k]\|\quad\mbox{and}\quad
  \|\mathbb{E}[P_\mathcal{H}e_{k+1}]\| \leq \|\mathbb{E}[P_\mathcal{H}e_k]\|.
\end{align*}
Thus the low-frequency error always decreases faster than the high-frequency one in the weak sense. Further,
there is no interaction between the low- and high-frequency errors in the weak error.
\end{remark}

Next we analyze preasymptotic strong convergence of SGD. We first analyze exact data $y^\dag$. The argument
is needed for the proof of Theorem \ref{thm:asympt-noisy}.
\begin{lemma}\label{thm:asympt-exact}
If $\eta_k\leq c_0 $ such that $c_0\max_i\|a_i\|\leq 1$, then with $c_1=\sigma_L^2$ and
$c_2=\sum_{i=L+1}^r\sigma_i^2$, there hold
\begin{align*}
  \mathbb{E}[\|P_\mathcal{L}e_{k+1}\|^2 |\mathcal{F}_{k-1}] &\leq (1-\eta_kc_1) \|P_\mathcal{L}e_k\|^2 + c_2c_0^{-1}\eta_k^2\|P_\mathcal{H}e_k\|^2,\\
  \mathbb{E}[\|P_\mathcal{H}e_{k+1}\|^2|\mathcal{F}_{k-1}] &\leq c_2c_0^{-1}\eta_k^2\|P_\mathcal{L}e_k\|^2 +(1+c_2c_0^{-1}\eta_k^2)\|P_\mathcal{H}e_k\|^2.
\end{align*}
\end{lemma}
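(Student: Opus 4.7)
The plan is to apply $P_\mathcal{L}$ and $P_\mathcal{H}$ to the exact-data recursion \eqref{eqn:iter-ex}, expand the squared norms, and take conditional expectation with respect to $\mathcal{F}_{k-1}$. Writing $u_k:=P_\mathcal{L}e_k$ and $h_k:=P_\mathcal{H}e_k$, the projected iterates satisfy $u_{k+1}=u_k-\eta_k(a_{i_k},e_k)P_\mathcal{L}a_{i_k}$ and $h_{k+1}=h_k-\eta_k(a_{i_k},e_k)P_\mathcal{H}a_{i_k}$. Since $u_k\in\mathcal{L}$ and $h_k\in\mathcal{H}$, the inner products collapse as $(u_k,P_\mathcal{L}a_{i_k})=(u_k,a_{i_k})$ and $(h_k,P_\mathcal{H}a_{i_k})=(h_k,a_{i_k})$, and the conditional expectations of the cross terms become
$$\mathbb{E}[(a_{i_k},e_k)(u_k,a_{i_k})\mid\mathcal{F}_{k-1}]=(u_k,Bu_k),\qquad\mathbb{E}[(a_{i_k},e_k)(h_k,a_{i_k})\mid\mathcal{F}_{k-1}]=(h_k,Bh_k),$$
because $B$ commutes with $P_\mathcal{L}$ and $P_\mathcal{H}$ (as they project onto eigenspaces of $B$), so $(u_k,Bh_k)=(h_k,Bu_k)=0$. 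The spectral representation of $B$ then gives $(u_k,Bu_k)\geq\sigma_L^2\|u_k\|^2=c_1\|u_k\|^2$ and $(h_k,Bh_k)\leq\sigma_{L+1}^2\|h_k\|^2\leq c_2\|h_k\|^2$.

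For the low-frequency quadratic term I would exploit the pointwise estimate $\|P_\mathcal{L}a_{i_k}\|^2\leq\|a_{i_k}\|^2\leq c_0^{-1}$ to obtain
$$\mathbb{E}[(a_{i_k},e_k)^2\|P_\mathcal{L}a_{i_k}\|^2\mid\mathcal{F}_{k-1}]\leq c_0^{-1}(e_k,Be_k)=c_0^{-1}[(u_k,Bu_k)+(h_k,Bh_k)].$$
Assembling the expansion, the net coefficient of $(u_k,Bu_k)$ is $-(2\eta_k-\eta_k^2c_0^{-1})$, which under $\eta_k\leq c_0$ is at most $-\eta_k$; hence this combined contribution is bounded by $-\eta_k(u_k,Bu_k)\leq-\eta_kc_1\|u_k\|^2$. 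The remaining $\eta_k^2c_0^{-1}(h_k,Bh_k)$ is controlled by $\eta_k^2c_0^{-1}c_2\|h_k\|^2$, yielding the first stated inequality.

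The high-frequency case requires a different strategy, since the pointwise bound $\|P_\mathcal{H}a_i\|^2\leq c_0^{-1}$ would produce an $\eta_k^2c_0^{-1}(u_k,Bu_k)\leq\eta_k^2c_0^{-1}\sigma_1^2\|u_k\|^2$ contribution, and $\sigma_1^2$ is not generally controlled by $c_2=\sum_{i=L+1}^r\sigma_i^2$. Instead I would apply Cauchy--Schwarz as $(a_i,e_k)^2\leq\|a_i\|^2\|e_k\|^2$ together with the trace identity
$$n^{-1}\sum_{i=1}^n\|P_\mathcal{H}a_i\|^2=\operatorname{tr}(P_\mathcal{H}BP_\mathcal{H})=\sum_{i=L+1}^r\sigma_i^2=c_2,$$
to get $\mathbb{E}[(a_{i_k},e_k)^2\|P_\mathcal{H}a_{i_k}\|^2\mid\mathcal{F}_{k-1}]\leq c_0^{-1}c_2\|e_k\|^2=c_0^{-1}c_2(\|u_k\|^2+\|h_k\|^2)$. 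Since the cross term $-2\eta_k(h_k,Bh_k)$ is nonpositive it may simply be dropped, and the second inequality follows. The main obstacle is precisely this asymmetry: the low-frequency bound relies on absorbing the $\|u_k\|^2$ quadratic into the strict contraction $-\eta_kc_1\|u_k\|^2$, whereas the high-frequency bound has no contraction available and must route through the trace identity to avoid picking up an uncontrollable $\sigma_1^2$ factor on $\|u_k\|^2$.
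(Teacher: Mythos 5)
Your proposal is correct and follows essentially the same route as the paper: the same projected recursions, the same pointwise bound $\|P_\mathcal{L}a_{i}\|^2\leq c_0^{-1}$ absorbed into the contraction for the low-frequency part, and the same Cauchy--Schwarz plus trace identity $n^{-1}\sum_i\|P_\mathcal{H}a_i\|^2=\sum_{i=L+1}^r\sigma_i^2$ (the paper's $\|P_\mathcal{H}B^{1/2}\|_F^2=c_2$) for the high-frequency part, with the nonpositive cross term dropped. Your remark on why the two cases must be treated asymmetrically accurately reflects the structure of the paper's argument.
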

\begin{proof}
It follows from the SGD iteration \eqref{eqn:iter-ex} that
$
  P_\mathcal{L}e_{k+1} = P_\mathcal{L}e_k -\eta_k(a_{i_k},e_k)P_\mathcal{L}a_{i_k}.
$
This and the condition $c_0\max_i\|a_i\|^2\leq 1$, imply
\begin{align*}
  \|P_\mathcal{L}e_{k+1}\|^2 & = \|P_\mathcal{L}e_k\|^2 - 2\eta_k(a_{i_k},e_k)(P_\mathcal{L}e_k,P_\mathcal{L}a_{i_k}) + \eta_k^2 (e_k,a_{i_k})^2\|P_\mathcal{L}a_{i_k}\|^2\\
   & \leq \|P_\mathcal{L}e_k\|^2 - 2\eta_k(a_{i_k},e_k)(P_\mathcal{L}e_k,P_\mathcal{L}a_{i_k}) + c_0^{-1}\eta_k^2(e_k,a_{i_k})^2.
\end{align*}
The conditional expectation with respect to $\mathcal{F}_{k-1}$ is given by
\begin{align*}
  \mathbb{E}[\|P_\mathcal{L}e_{k+1}\|^2|\mathcal{F}_{k-1}] & \leq \|P_\mathcal{L}e_k\|^2 - 2\eta_kn^{-1}\sum_{i=1}^n(a_i,e_k)(P_\mathcal{L}e_k,P_\mathcal{L}a_i)
  + c_0^{-1}\eta_k^2n^{-1}\sum_{i=1}^n(e_k,a_i)^2\\
    &= \|P_\mathcal{L}e_k\|^2 - 2\eta_k (P_\mathcal{L}e_k,P_\mathcal{L}Be_k) + c_0^{-1}\eta_k^2(e_k,Be_k).
\end{align*}
With the splitting $e_k=P_\mathcal{L}e_k+P_\mathcal{H}e_k$ and the construction of $P_\mathcal{L}$
and $P_\mathcal{H}$, we obtain
\begin{align*}
  (P_\mathcal{L}e_k,P_\mathcal{L}Be_k) & = (P_\mathcal{L}e_k,P_\mathcal{L}B P_\mathcal{L}e_k),\\
  (e_k,B e_k)& = (P_\mathcal{L}e_k,P_\mathcal{L}BP_\mathcal{L}e_k) + (P_\mathcal{H}e_k,P_\mathcal{H}BP_\mathcal{H}e_k).
\end{align*}
Substituting the last two identities leads to
\begin{align*}
  \mathbb{E}[\|P_\mathcal{L}e_{k+1}\|^2|\mathcal{F}_{k-1}] & \leq \|P_\mathcal{L}e_k\|^2 - \eta_k(P_\mathcal{L}e_k,P_\mathcal{L}BP_\mathcal{L}e_k)+
  c_0^{-1}\eta_k^2(P_\mathcal{H}e_k,P_\mathcal{H}BP_\mathcal{H}e_k)\\
    & \leq (1-\eta_k\sigma_L^2)\|P_\mathcal{L}e_k\|^2 + c_0^{-1}\eta_k^2\sigma_{L+1}^2\|P_\mathcal{H}e_k\|^2\\
    &\leq (1-c_1\eta_k)\|P_\mathcal{L}e_k\|^2 + c_2c_0^{-1}\eta_k^2\|P_\mathcal{H}e_k\|^2.
\end{align*}
This shows the first estimate. Next, appealing again to the SGD iteration \eqref{eqn:iter-ex}, we obtain
\begin{equation*}
  P_\mathcal{H}e_{k+1} = P_\mathcal{H}e_k -\eta_k(a_{i_k},e_k)P_\mathcal{H}a_{i_k},
\end{equation*}
which together with the condition $c_0\max_i\|a_i\|^2\leq 1$, and the Cauchy-Schwarz inequality, implies
\begin{align*}
  \|P_\mathcal{H}e_{k+1}\|^2 & = \|P_\mathcal{H}e_k\|^2 - 2\eta_k(a_{i_k},e_k)(P_\mathcal{H}e_k,P_\mathcal{H}a_{i_k}) + \eta_k^2 (e_k,a_{i_k})^2\|P_\mathcal{H}a_{i_k}\|^2\\
   & \leq \|P_\mathcal{H}e_k\|^2 - 2\eta_k(a_{i_k},e_k)(P_\mathcal{H}e_k,P_\mathcal{H}a_{i_k}) + c_0^{-1}\eta_k^2\|e_k\|^2\|P_\mathcal{H}a_{i_k}\|^2.
\end{align*}
Thus the conditional expectation $\mathbb{E}[\|P_\mathcal{H}e_{k+1}\|^2|\mathcal{F}_{k-1}]$ is given by
\begin{align*}
  \mathbb{E}[\|P_\mathcal{H}e_{k+1}\|^2|\mathcal{F}_{k-1}] & \leq \|P_\mathcal{H}e_k\|^2 - 2\eta_kn^{-1}\sum_{i=1}^n(a_i,e_k)
  (P_\mathcal{H}a_i,P_\mathcal{H}e_k) + c_0^{-1}\eta_k^2n^{-1}\|e_k\|^2\sum_{i=1}^n\|P_\mathcal{H}a_i\|_F^2\\
    &= \|P_\mathcal{H}e_k\|^2 - 2\eta_k (P_\mathcal{H}e_k,P_\mathcal{H}Be_k) + c_0^{-1}\eta_k^2\|e_k\|^2\|P_\mathcal{H}B^\frac{1}{2}\|^2_F.
\end{align*}
Upon observing the identity $\|P_\mathcal{H}B^\frac12\|_F^2 = \sum_{i=L+1}^r\sigma_i^2\equiv c_2$ \cite[Lemma 3.2]{JiaoJinLu:2017}, we deduce
\begin{align*}
  \mathbb{E}[\|P_\mathcal{H}e_{k+1}\|^2|\mathcal{F}_{k-1}] & \leq \|P_\mathcal{H}e_k\|^2 - 2\eta_k\|B^\frac12P_\mathcal{H}e_k\|^2+ c_2c_0^{-1}\eta_k^2\|e_k\|^2\\
    & \leq \|P_\mathcal{H}e_k\|^2 + c_2c_0^{-1}\eta_k^2(\|P_\mathcal{L}e_k\|^2 + \|P_\mathcal{H}e_k\|^2).
\end{align*}
This proves the second estimate and completes the proof of the lemma.
\end{proof}

\begin{remark}
The proof gives a slightly sharper estimate on the low-frequency error:
\begin{align*}
  \mathbb{E}[\|P_\mathcal{L}e_{k+1}\|^2|\mathcal{F}_{k-1}]
    & \leq (1-\eta_k\sigma_L^2)\|P_\mathcal{L}e_k\|^2 + c_0^{-1}\eta_k^2\sigma_{L+1}^2\|P_\mathcal{H}e_k\|^2.
\end{align*}
\end{remark}

Now we can present the proof of Theorem \ref{thm:asympt-noisy} on preasymptotic strong convergence.
\begin{proof}[Proof of Theorem \ref{thm:asympt-noisy}]
It follows from the SGD iteration \eqref{eqn:iter-noisy} that
\begin{equation*}
  P_\mathcal{L}e_{k+1}^\delta = P_\mathcal{L}e_k^\delta -\eta_k(a_{i_k},e_k^\delta)P_\mathcal{L}a_{i_k} + \eta_k\xi_{i_k}P_\mathcal{L}a_{i_k},
\end{equation*}
and upon expansion, we obtain
\begin{align*}
  \mathbb{E}[\|P_\mathcal{L}e_{k+1}^\delta\|^2|\mathcal{F}_{k-1}] = \mathbb{E}[ \|P_\mathcal{L}e_k^\delta& -\eta_k(a_{i_k},e_k^\delta)P_\mathcal{L}a_{i_k}\|^2|\mathcal{F}_{k-1}]+  \eta_k^2\mathbb{E}[\xi_{i_k}^2\|P_\mathcal{L}a_{i_k}\|^2|\mathcal{F}_{k-1}] \\
  & +2\mathbb{E}[(P_\mathcal{L}e_k^\delta -\eta_k(a_{i_k},e_k^\delta)P_\mathcal{L}a_{i_k}, \eta_k\xi_{i_k}P_\mathcal{L}a_{i_k})|\mathcal{F}_{k-1}]
   := {\rm I}_1 + {\rm I}_2 + {\rm I}_3.
\end{align*}
It suffices to bound the three terms ${\rm I}_i$. The term ${\rm I}_1$ can be bounded by the
argument in Lemma \ref{thm:asympt-exact} as
\begin{equation}\label{eqn:est1}
  {\rm I}_1 \leq (1-\eta_kc_1) \|P_\mathcal{L}e_k^\delta\|^2 + c_2c_0^{-1}\eta_k^2\|P_\mathcal{H}e_k^\delta\|^2.
\end{equation}
For the term ${\rm I}_2$, by Assumption \ref{ass:eta}, there holds
${\rm I}_2 \leq \eta_k^2n^{-1}\max_i\|P_\mathcal{L}a_i\|^2\sum_{i=1}^n\xi_i^2 \leq c_0^{-1}\eta_k^2 \bar\delta^2$.
For the third term ${\rm I}_3$, by the identity $(a_i,e_k^\delta)=(P_\mathcal{L}a_i,P_\mathcal{L}e_k^\delta)
+(P_\mathcal{H}a_i,P_\mathcal{H}e_k^\delta)$, we have
\begin{align*}
  {\rm I}_3=2 n^{-1}\eta_k\sum_{i=1}^n\xi_i[(P_\mathcal{L}a_i,P_\mathcal{L}e_k^\delta) -\eta_k(a_{i},e_k^\delta)
  \|P_\mathcal{L}a_{i}\|^2] = 2 n^{-1}\eta_k\sum_{i=1}^n\xi_i {\rm I}_{3,i},
\end{align*}
with
${\rm I}_{3,i} =(1-\eta_k\|P_\mathcal{L}a_i\|^2)(P_\mathcal{L}a_i,P_\mathcal{L}e_k^\delta) - \eta_k(P_\mathcal{H}a_{i},P_\mathcal{H}e_k^\delta)
\|P_\mathcal{L}a_{i}\|^2$.
It suffices to bound ${\rm I}_{3,i}$. By the condition on $\eta_k$, we deduce
\begin{align*}
  {\rm I}_{3,i}^2 & = 2(1-\eta_k\|P_\mathcal{L}a_i\|^2)^2(P_\mathcal{L}a_i,P_\mathcal{L}e_k^\delta)^2 + 2\eta_k^2\|P_\mathcal{L}a_i\|^4(P_\mathcal{H}a_{i},P_\mathcal{H}e_k^\delta)^2\\
    & \leq 2(P_\mathcal{L}a_i,P_\mathcal{L}e_k^\delta)^2 + 2(P_\mathcal{H}a_{i},P_\mathcal{H}e_k^\delta)^2,
\end{align*}
and consequently,
\begin{align*}
  \sum_{i=1}^n{\rm I}_{3,i}^2 &\leq 2\sum_{i=1}^n\left((P_\mathcal{L}a_i,P_\mathcal{L}e_k^\delta)^2 + (P_\mathcal{H}a_{i},P_\mathcal{H}e_k^\delta)^2 \right)= 2\| A^t e_k^\delta\|^2 \leq 2n \|B\|\|e_k^\delta\|^2.
\end{align*}
Combining these two estimates with the Cauchy-Schwarz inequality leads to $|{\rm I}_3|\leq 2\sqrt{2}\bar\delta\eta_k\sigma_1\|e_k^\delta\|$.
The bounds on ${\rm I}_1$, ${\rm I}_2$ and ${\rm I}_3$ together show the first assertion. For the high-frequency part $P_He_k^\delta$, we have
\begin{equation*}
  P_\mathcal{H}e_{k+1}^\delta = P_\mathcal{H}e_k^\delta -\eta_k(a_{i_k},e_k^\delta)P_\mathcal{H}a_{i_k} +\eta_k\xi_{i_k}P_\mathcal{H}a_{i_k},
\end{equation*}
and upon expansion, we obtain
\begin{align*}
  \mathbb{E}[\|P_\mathcal{H}e_{k+1}^\delta\|^2|\mathcal{F}_{k-1}] &= \mathbb{E}[\|P_\mathcal{H}e_k^\delta -\eta_k(a_{i_k},e_k^\delta)P_\mathcal{H}a_{i_k}\|^2|\mathcal{F}_{k-1}]
  +  \eta_k^2\mathbb{E}[\xi_{i_k}^2\|P_\mathcal{H}a_{i_k}\|^2|\mathcal{F}_{k-1}] \\
  &\quad+2\mathbb{E}[(P_\mathcal{H}e_k^\delta -\eta_k(a_{i_k},e_k^\delta)P_\mathcal{H}a_{i_k}, \eta_k\xi_{i_k}P_\mathcal{H}a_{i_k})|\mathcal{F}_{k-1}]
:= {\rm I}_4 + {\rm I}_5 + {\rm I}_6.
\end{align*}
The term ${\rm I}_4$ can be bounded by the argument in Lemma \ref{thm:asympt-exact} as
\begin{equation*}
  {\rm I}_4 \leq c_2c_0^{-1}\eta_k^2\|P_\mathcal{L}e_k^\delta\|^2 +(1+c_2c_0^{-1}\eta_k^2)\|P_\mathcal{H}e_k^\delta\|^2.
\end{equation*}
 Clearly, ${\rm I}_5 \leq c_0^{-1}\eta_k^2\bar\delta^2$.
For  ${\rm I}_6$, simple computation yields
\begin{equation*}
  {\rm I}_6 = 2n^{-1}\eta_k\sum_{i=1}^n \xi_i[(P_\mathcal{H}a_i,P_\mathcal{H}e_k^\delta)-\eta_k(a_i,e_k)\|P_\mathcal{H}a_i\|^2] :=2n^{-1}\eta_k\sum_{i=1}^n\xi_i{\rm I}_{6,i},
\end{equation*}
with ${\rm I}_{6,i}$ given by ${\rm I}_{6,i} = (P_\mathcal{H}a_i,P_\mathcal{H}e_k^\delta)
-\eta_k(a_i,e_k)\|P_\mathcal{H}a_i\|^2$. Simple computation shows
\begin{align*}
  \sum_{i=1}^n{\rm I}_{6,i}^2 & \leq 2 \sum_{i=1}^n\Big((P_\mathcal{H}a_i,P_\mathcal{H}e_k^\delta)^2+\eta_k^2(a_i,e_k)^2\|P_\mathcal{H}a_i\|^4\Big)\\
    & \leq \big(2\|P_\mathcal{H}e_k^\delta\|^2 + 2\eta_k^2\max_{i}\|a_i\|^4\|e_k^\delta\|^2\big) \sum_{i=1}^n\|P_\mathcal{H}a_i\|^2\\
    & \leq 2c_2n(\|P_\mathcal{H}e_k^\delta\|^2 + c_0^{-2}\eta_k^2\|e_k^\delta\|^2),
\end{align*}
where the last line is due to the identity $\|P_\mathcal{H}B^\frac12\|_F^2 = \sum_{i=L+1}^r\sigma_i^2\equiv c_2$ \cite[Lemma 3.2]{JiaoJinLu:2017}.
This estimate together with the Cauchy-Schwarz inequality gives
\begin{equation*}
    |{\rm I}_6| \leq 2\sqrt{2}c_2^\frac{1}{2}\eta_k \bar\delta\Big(\|P_\mathcal{H}e_k^\delta\|^2 + c_0^{-2}\eta_k^2\|e_k^\delta\|^2\Big)^\frac{1}{2}.
\end{equation*}
These estimates together show the second assertion, and complete the proof.
\end{proof}

\section{Numerical experiments}\label{sec:numer}

Now we present numerical experiments to complement the theoretical study. All the numerical
examples, i.e., \texttt{phillips}, \texttt{gravity} and \texttt{shaw}, are taken from the public domain
\texttt{MATLAB} package \textbf{Regutools}\footnote{Available from \url{http://www.imm.dtu.dk/~pcha/Regutools/},
last accessed on January 8, 2018}. They are Fredholm integral equations of the first kind, with the first
example being mildly ill-posed, and the other two severely ill-posed. Unless otherwise stated,
the examples are discretized with a dimension $n=m=1000$. The noisy data $y^\delta$ is generated from the
exact data $y^\dag$ as
\begin{equation*}
  y^\delta_i = y_i^\dag + \delta \max_{j}(|y_j^\dag|)\xi_i,\quad i =1,\ldots,n,
\end{equation*}
where $\delta$ is the relative noise level, and the random variables $\xi_i$s follow the standard Gaussian
distribution. The initial guess $x_1$ is fixed at
$x_1=0$. We present the mean squared error $e_k$ and/or residual $r_k$, i.e.,
\begin{equation}\label{eqn:err-res}
  e_k =\mathbb{E}[\|x^\dag-x_k\|^2]\quad \mbox{and}\quad r_k = \mathbb{E}[\|Ax_k-y^\delta\|^2].
\end{equation}
The expectation $\mathbb{E}[\cdot]$ with respect to the random index $i_k$ is approximated
by the average of 100 independent runs. The constant $c_0$ in the step size schedule is always
taken to be $c_0=1/\max_i\|a_i\|^2$, and the exponent $\alpha$ is taken to be $\alpha=0.1$, unless otherwise
stated. All the computations were carried out on a personal laptop
with 2.50 GHz CPU and 8.00G RAM by \texttt{MATLAB} 2015b.

\subsection{The role of the exponent $\alpha$}

The convergence of SGD depends essentially on the parameter $\alpha$. To examine its role, we present in
Figs. \ref{fig:phil-sgd-al}, \ref{fig:grav-sgd-al} and \ref{fig:shaw-sgd-al} the numerical results for the
examples with different noise levels, computed using different $\alpha$ values. The smaller the $\alpha$
value is, the quicker the algorithm reaches the convergence and the iterate diverges for noisy data.
This agrees with the analysis in Section \ref{sec:reg}. Hence, a smaller $\alpha$ value is desirable
for convergence. However, in the presence of large noise, a too small $\alpha$ value may sacrifice the
attainable accuracy; see Figs. \ref{fig:phil-sgd-al}(c) and \ref{fig:grav-sgd-al}(c) for illustrations;
and also the oscillation magnitudes of the iterates and the residual tend to be larger. This is possibly
due to the intrinsic variance for large step sizes, and it would be interesting to precisely characterize
the dynamics, e.g., with stochastic differential equations \cite{LiTaiE:2017}. In practice, the
variations may cause problems with a proper stopping rule (especially with only one single trajectory).

\begin{figure}[hbt!]
  \centering
  \setlength{\tabcolsep}{0pt}
  \begin{tabular}{ccc}
   \includegraphics[trim={2cm 0 2cm 0},clip,width=.3\textwidth]{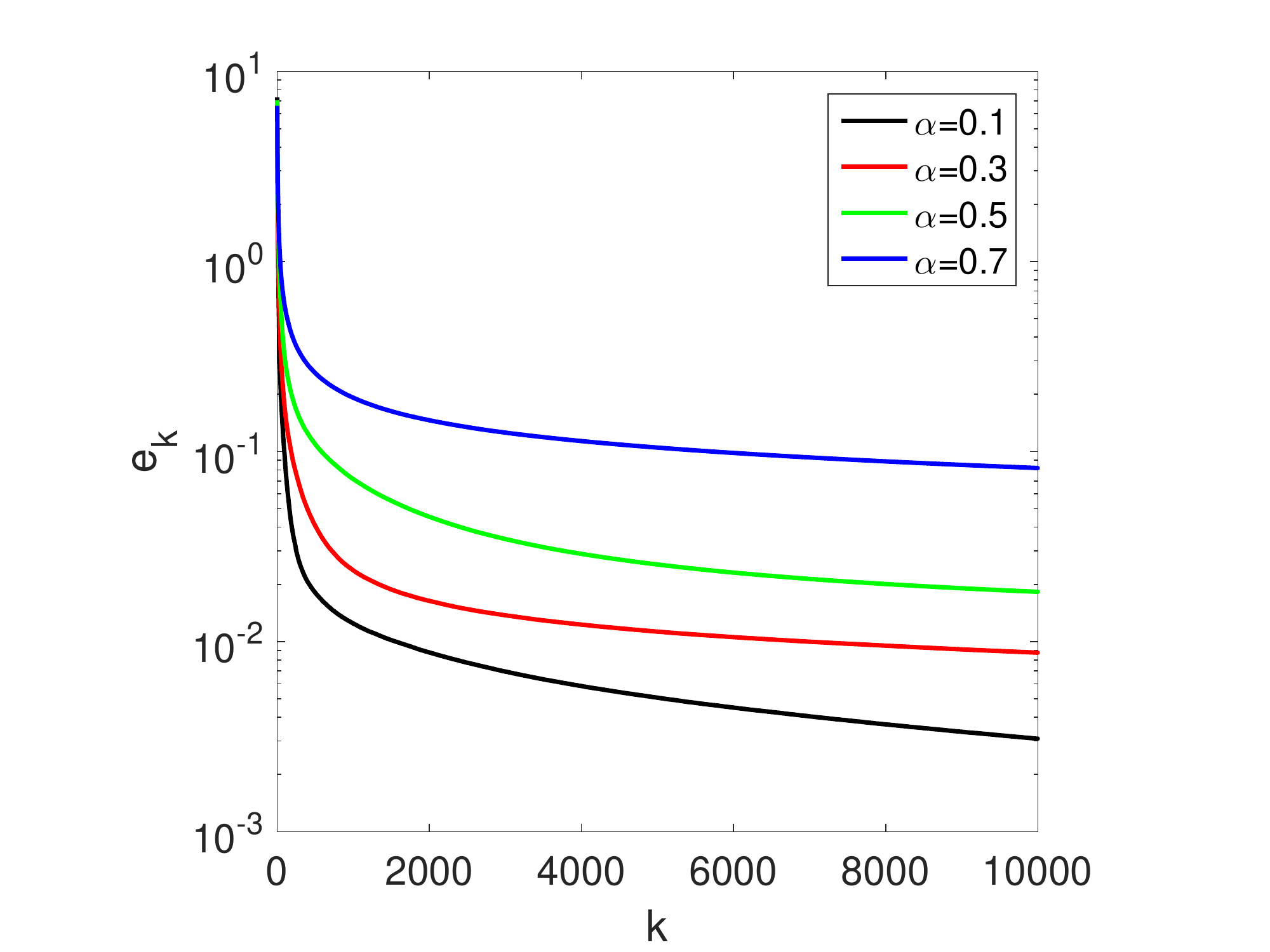} & \includegraphics[trim={2cm 0 2cm 0},clip,width=.30\textwidth]{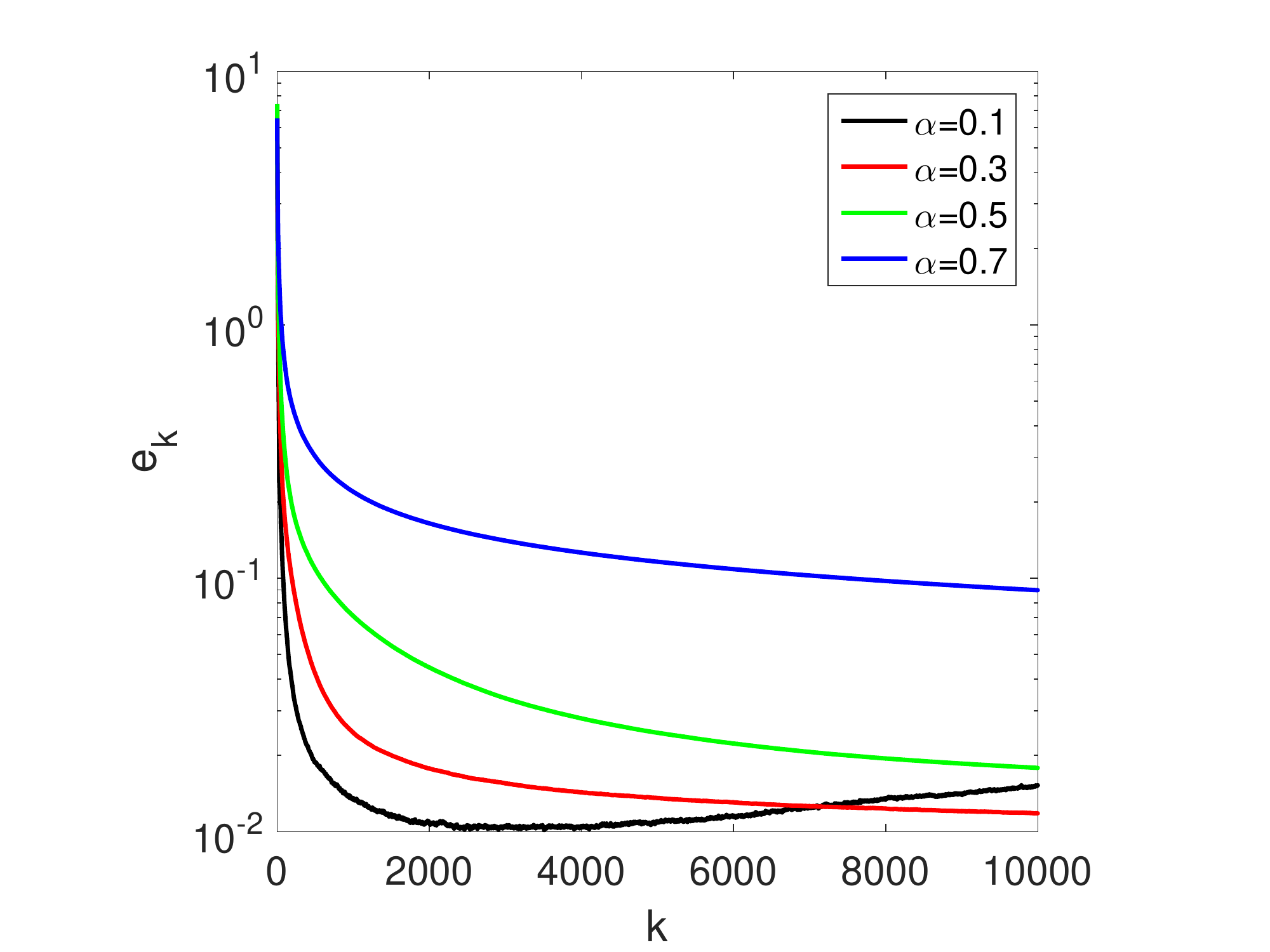} &\includegraphics[trim={2cm 0 2cm 0},clip,width=.30\textwidth]{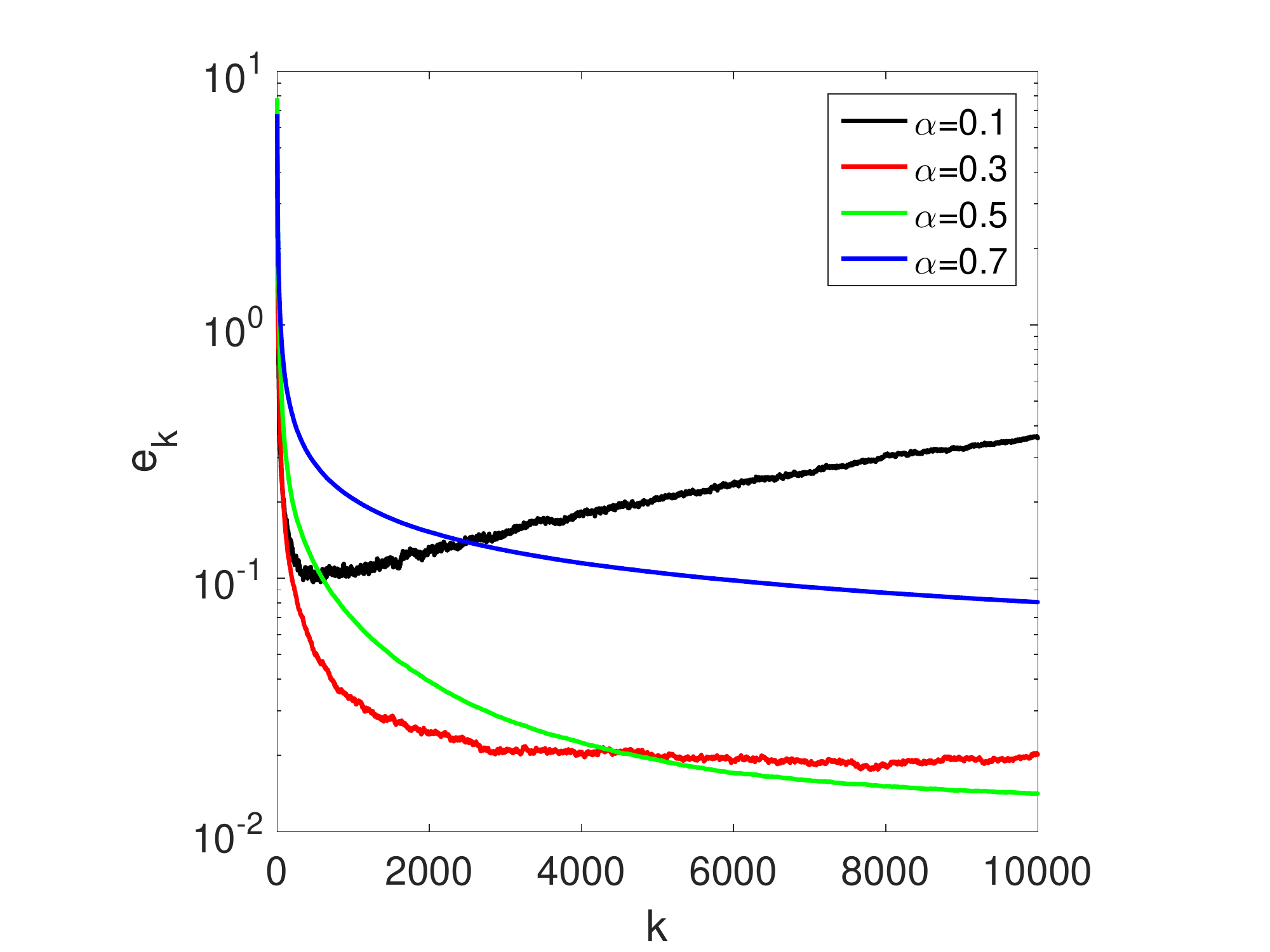}\\
  \includegraphics[trim={2cm 0 2cm 0},clip,width=.30\textwidth]{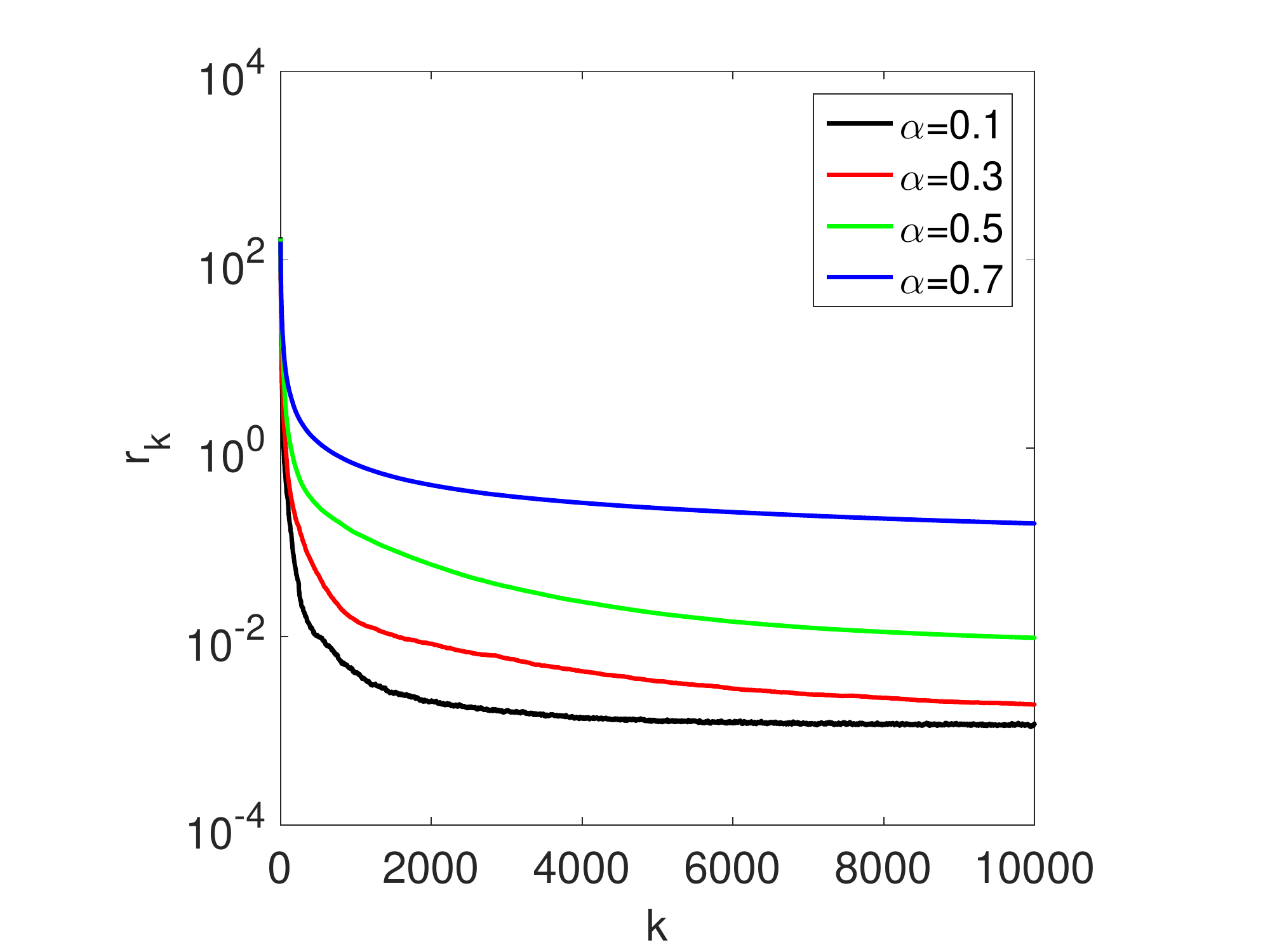} & \includegraphics[trim={2cm 0 2cm 0},clip,width=.30\textwidth]{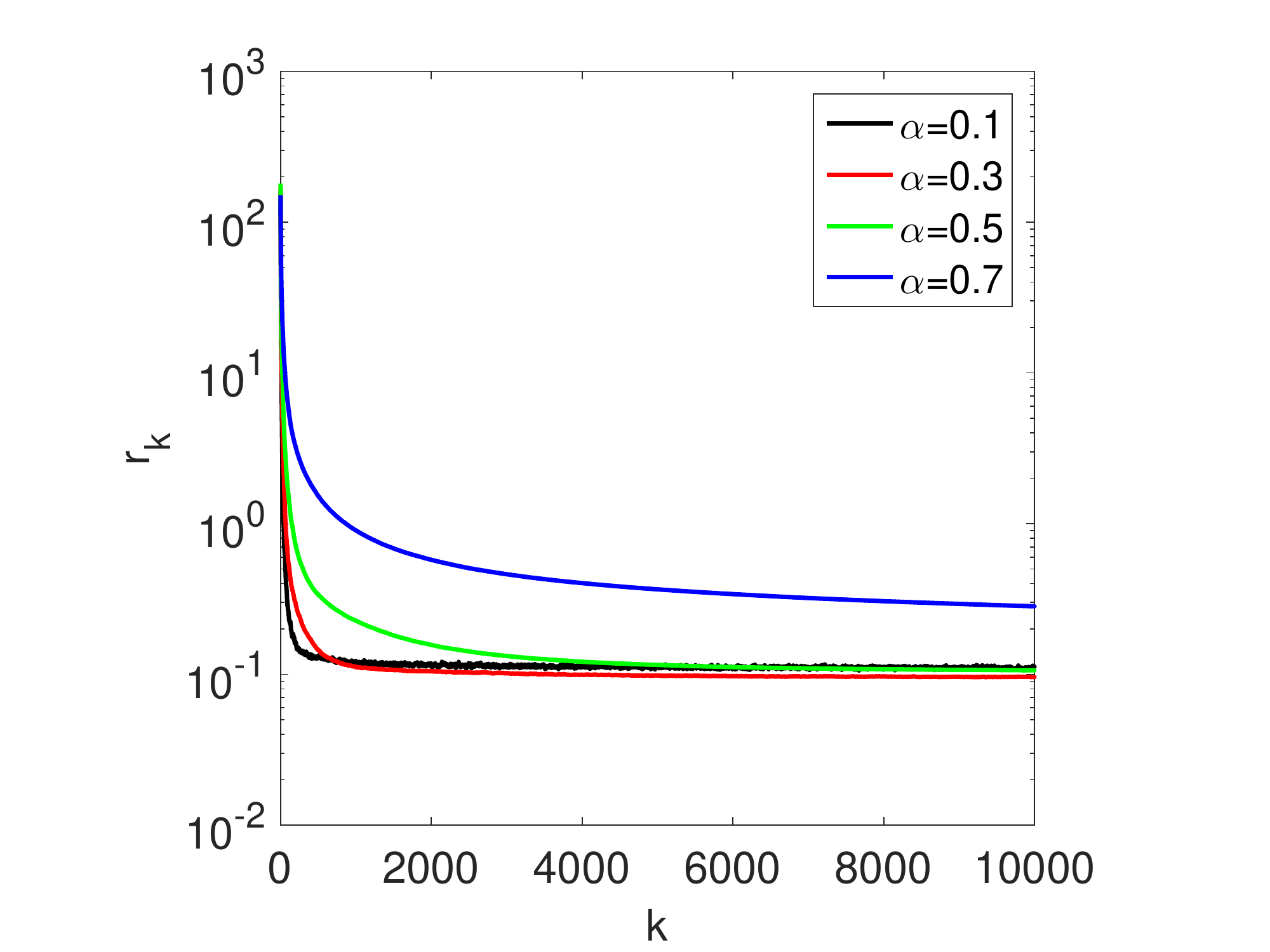} &\includegraphics[trim={2cm 0 2cm 0},clip,width=.30\textwidth]{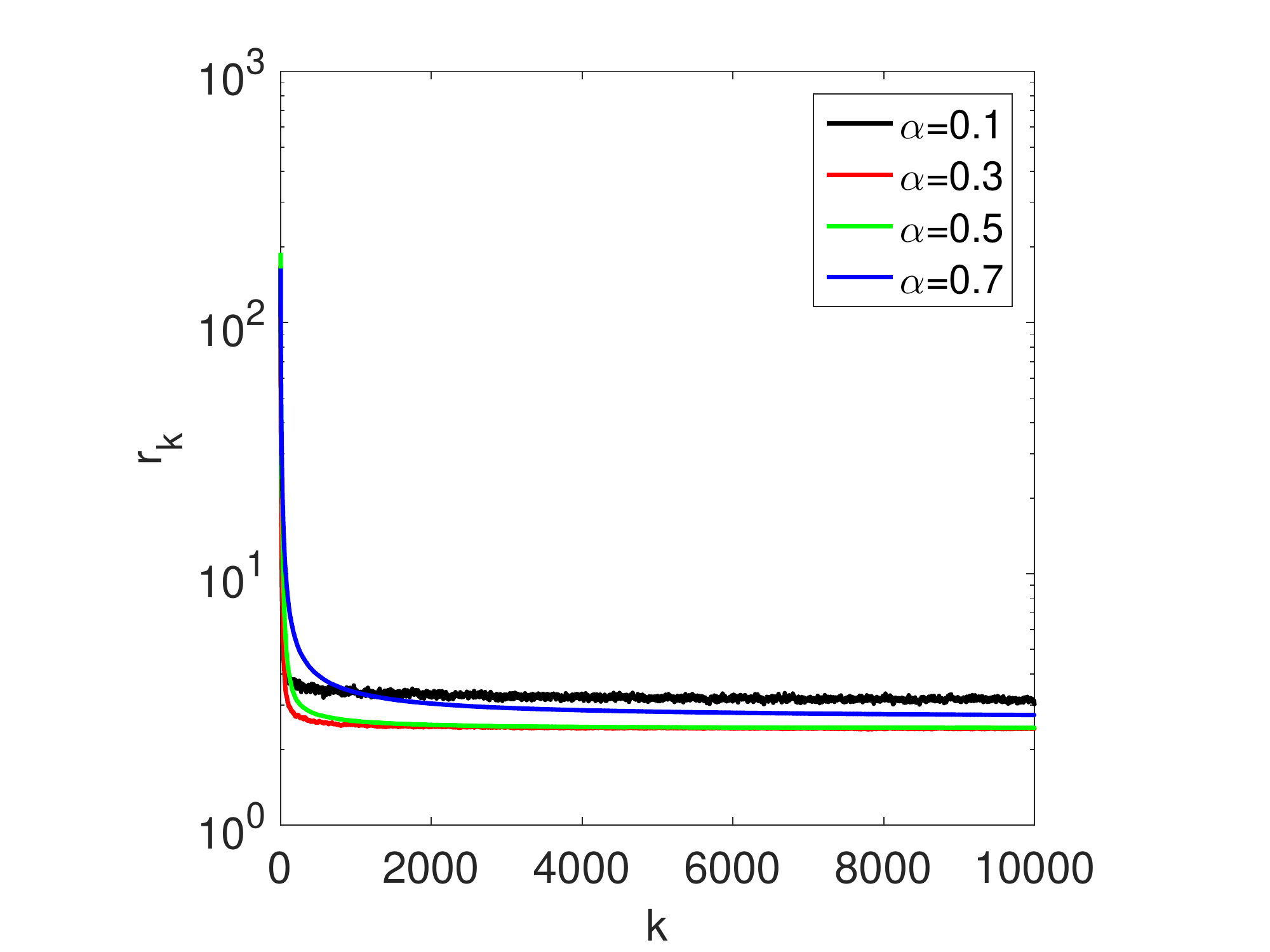}\\
  (a) $\delta=\text{1e-3}$ & (b) $\delta=\text{1e-2}$ & (c) $\delta = \text{5e-2}$
  \end{tabular}
  \caption{Numerical results for \texttt{phillips} with different noise levels by SGD (with various $\alpha$).\label{fig:phil-sgd-al}}
\end{figure}

\begin{figure}[hbt!]
  \centering
  \setlength{\tabcolsep}{0pt}
  \begin{tabular}{ccc}
   \includegraphics[trim={2cm 0 2cm 0},clip,width=.30\textwidth]{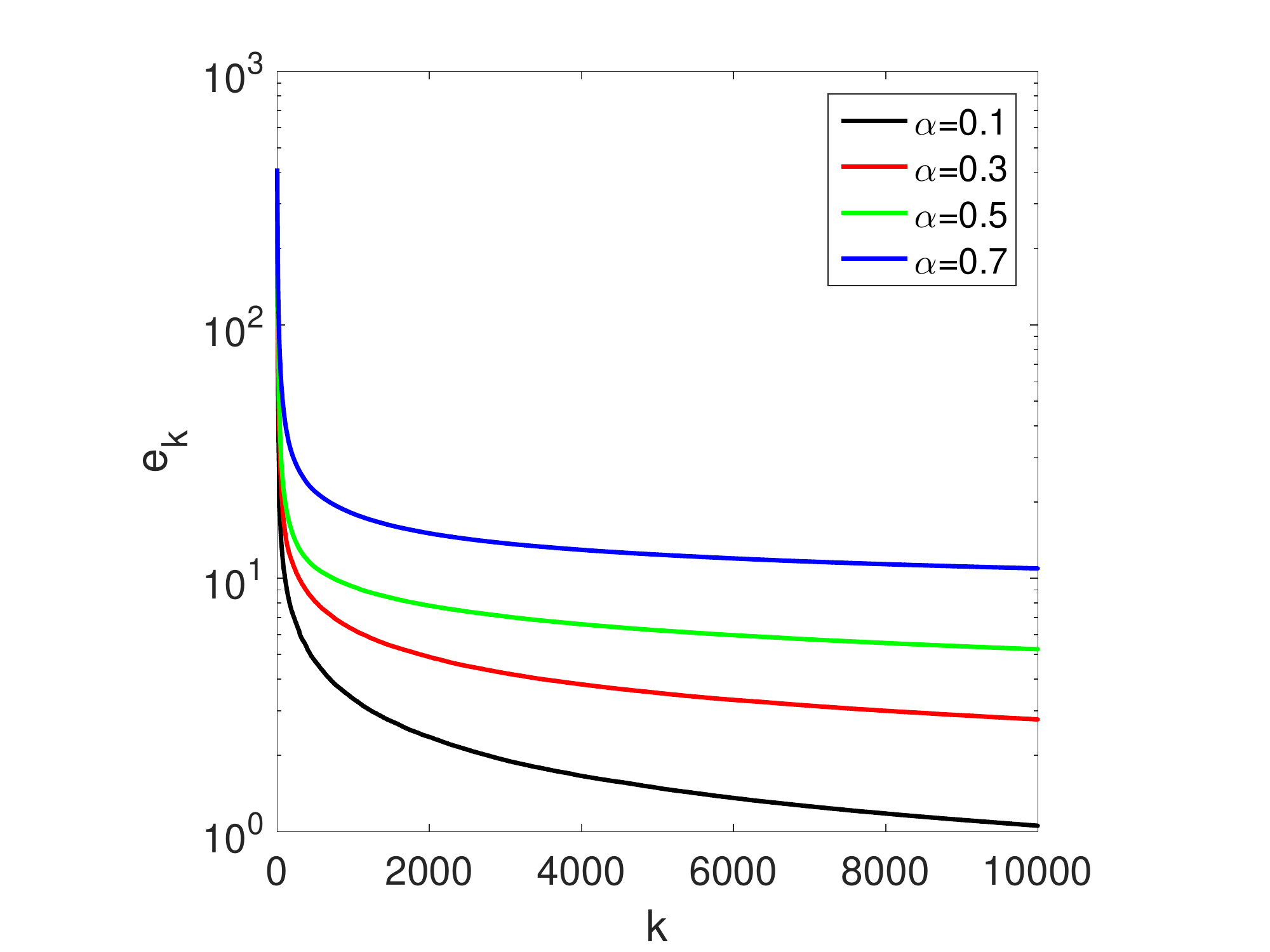} & \includegraphics[trim={2cm 0 2cm 0},clip,width=.30\textwidth]{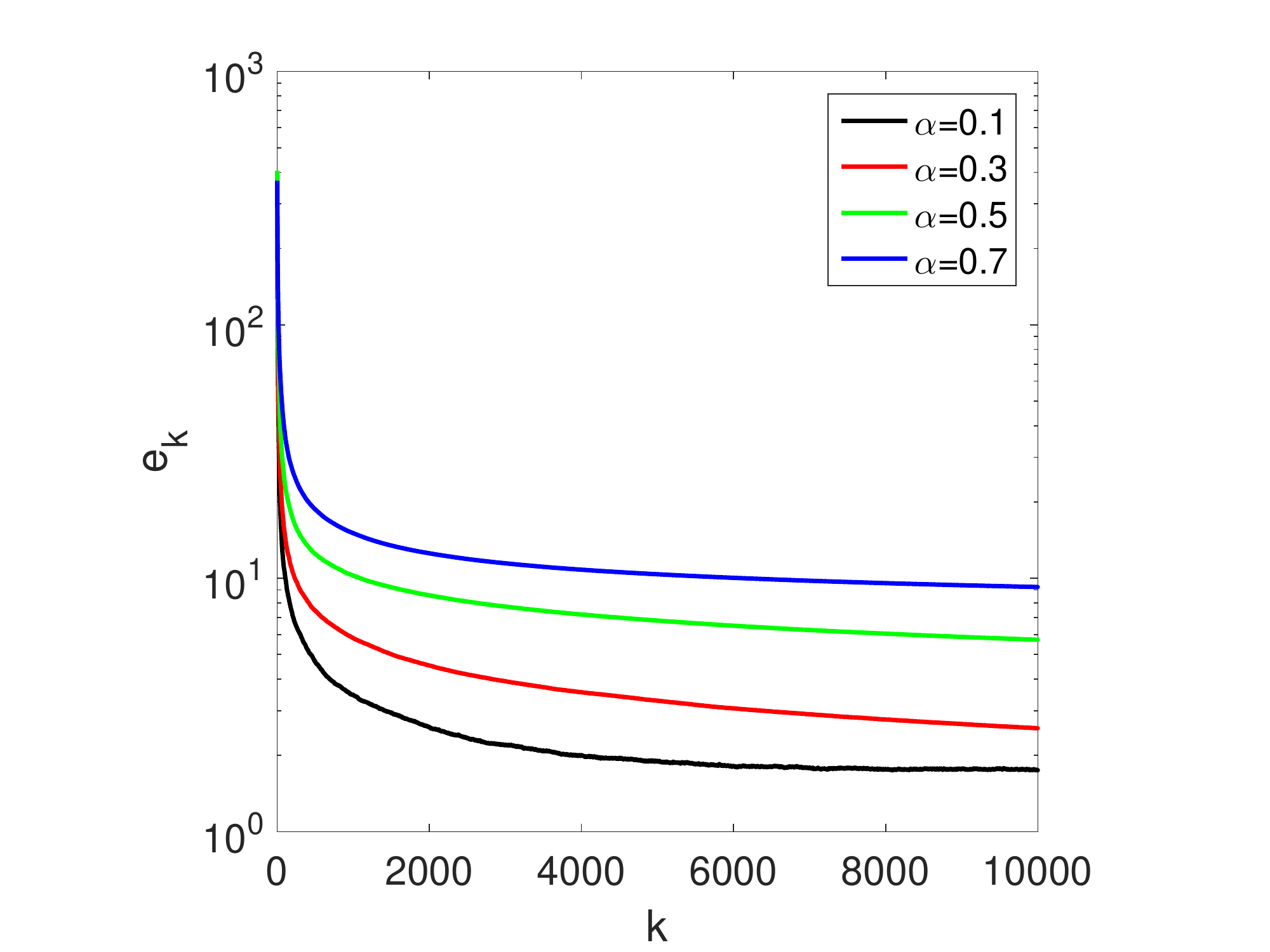} &\includegraphics[trim={2cm 0 2cm 0},clip,width=.30\textwidth]{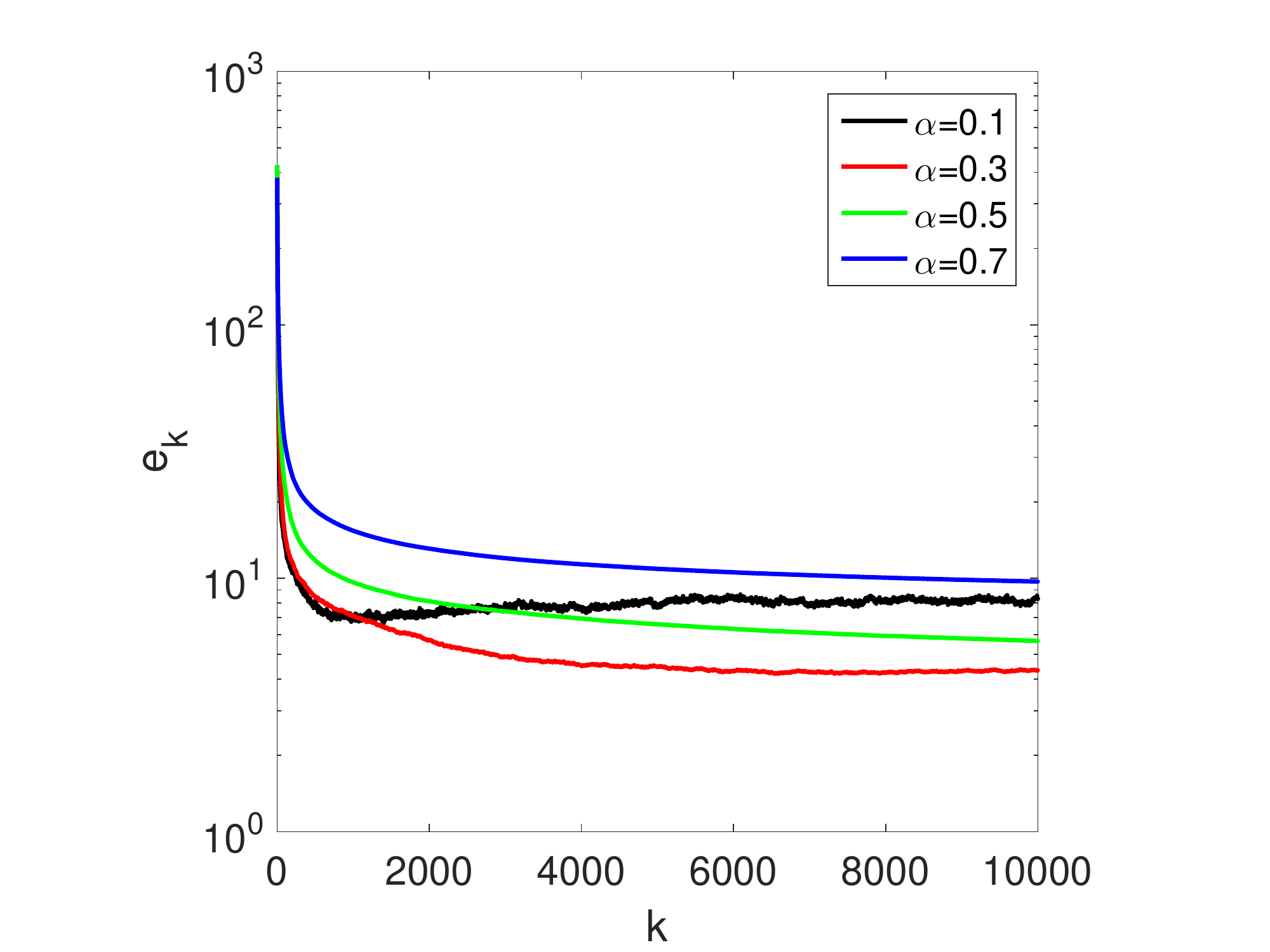}\\
  \includegraphics[trim={2cm 0 2cm 0},clip,width=.30\textwidth]{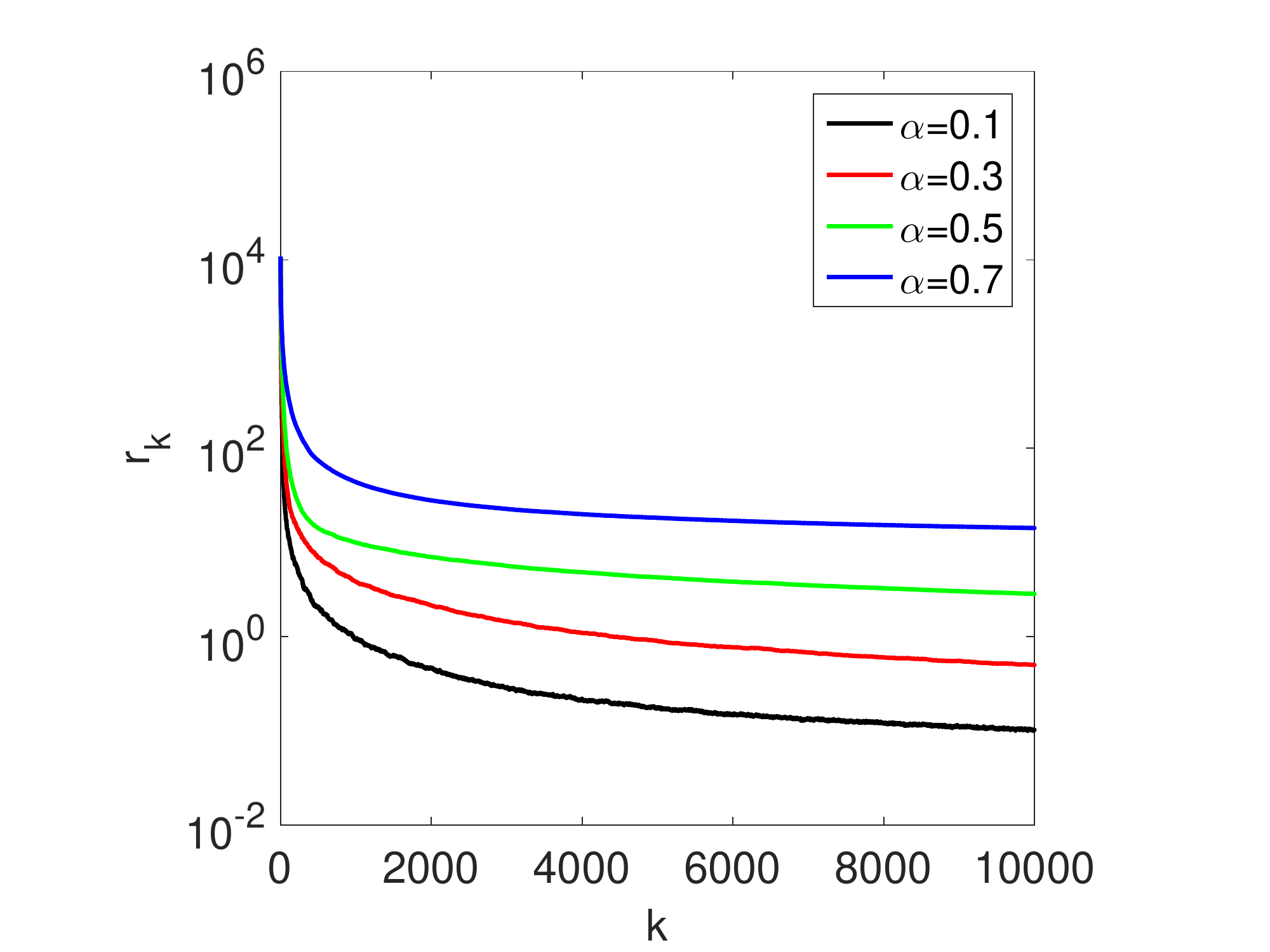} & \includegraphics[trim={2cm 0 2cm 0},clip,width=.30\textwidth]{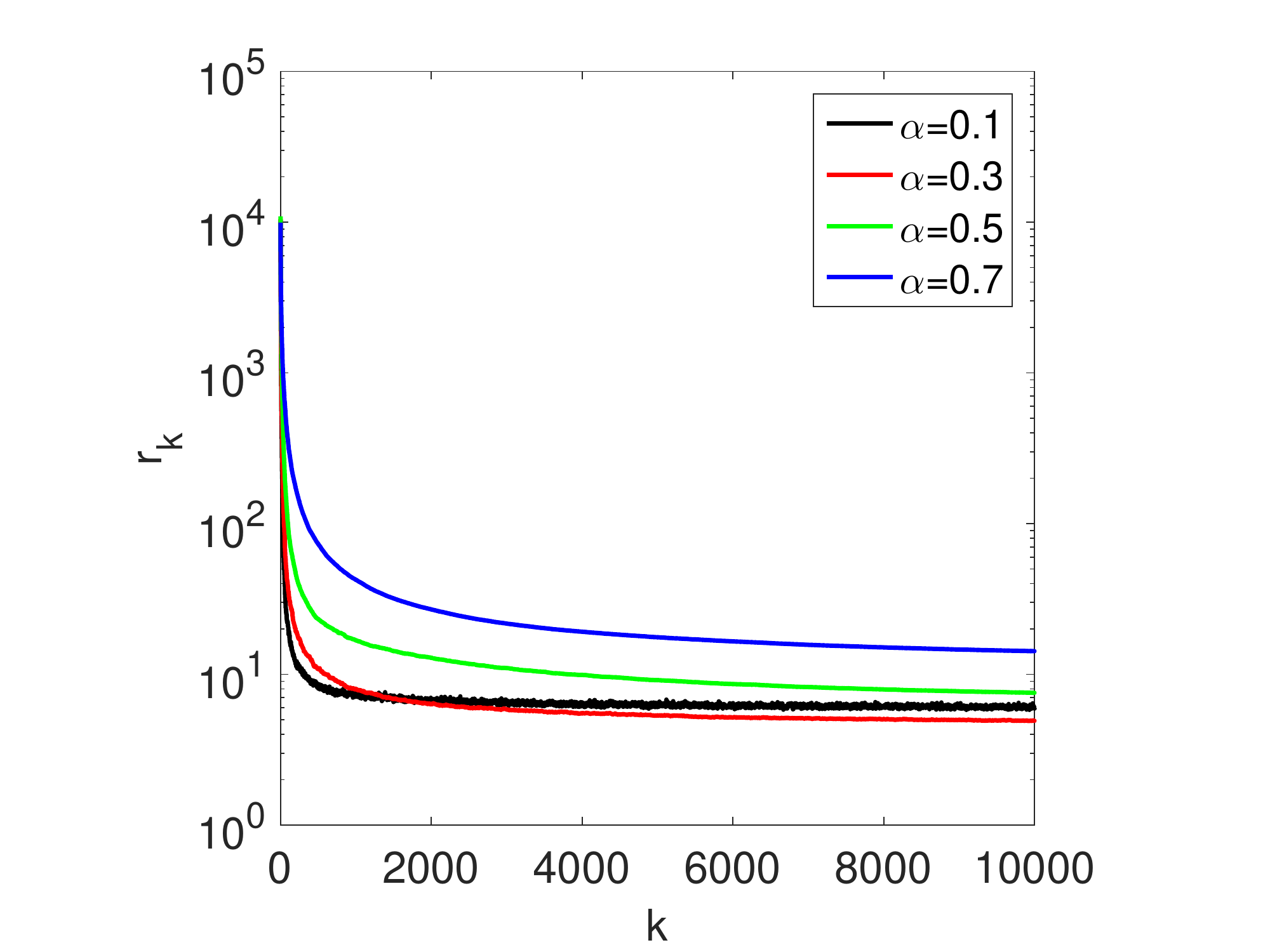} &\includegraphics[trim={2cm 0 2cm 0},clip,width=.30\textwidth]{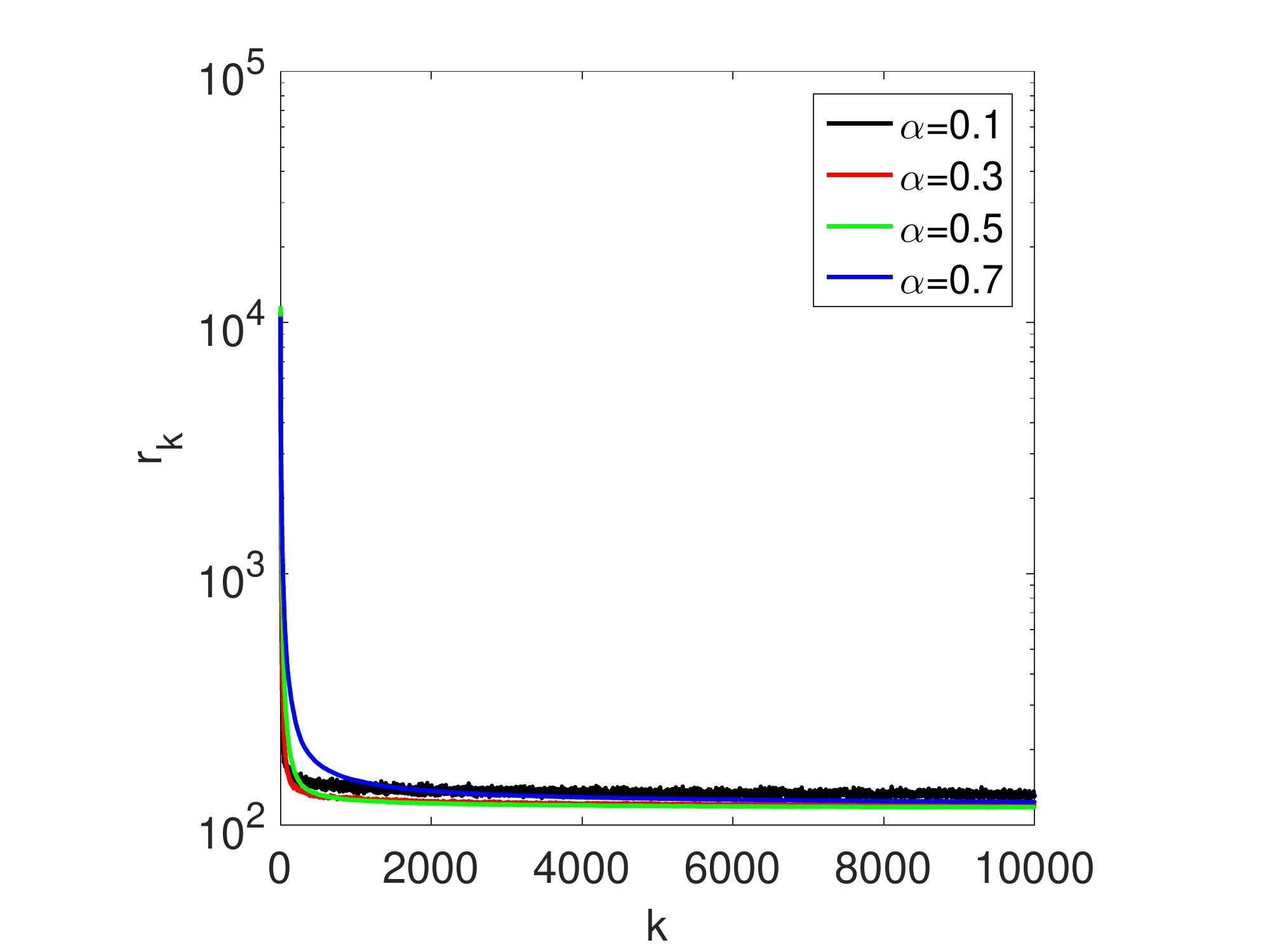}\\
  (a) $\delta=\text{1e-3}$ & (b) $\delta=\text{1e-2}$ & (c) $\delta = \text{5e-2}$
  \end{tabular}
  \caption{Numerical results for \texttt{gravity} with different noise levels by SGD (with various $\alpha$).\label{fig:grav-sgd-al}}
\end{figure}

\begin{figure}[hbt!]
  \centering
  \setlength{\tabcolsep}{0pt}
  \begin{tabular}{ccc}
   \includegraphics[trim={2cm 0 2cm 0},clip,width=.30\textwidth]{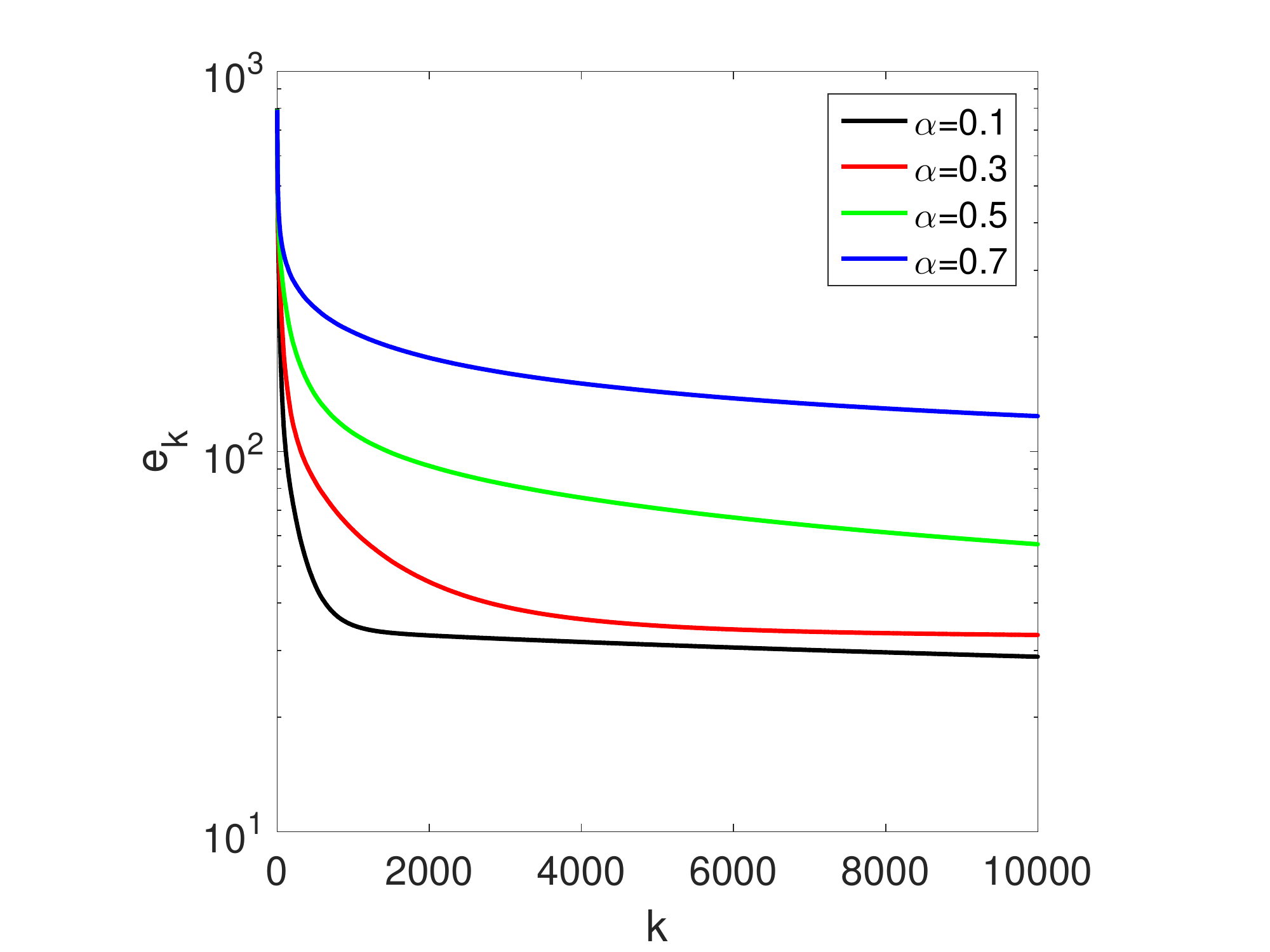} & \includegraphics[trim={2cm 0 2cm 0},clip,width=.30\textwidth]{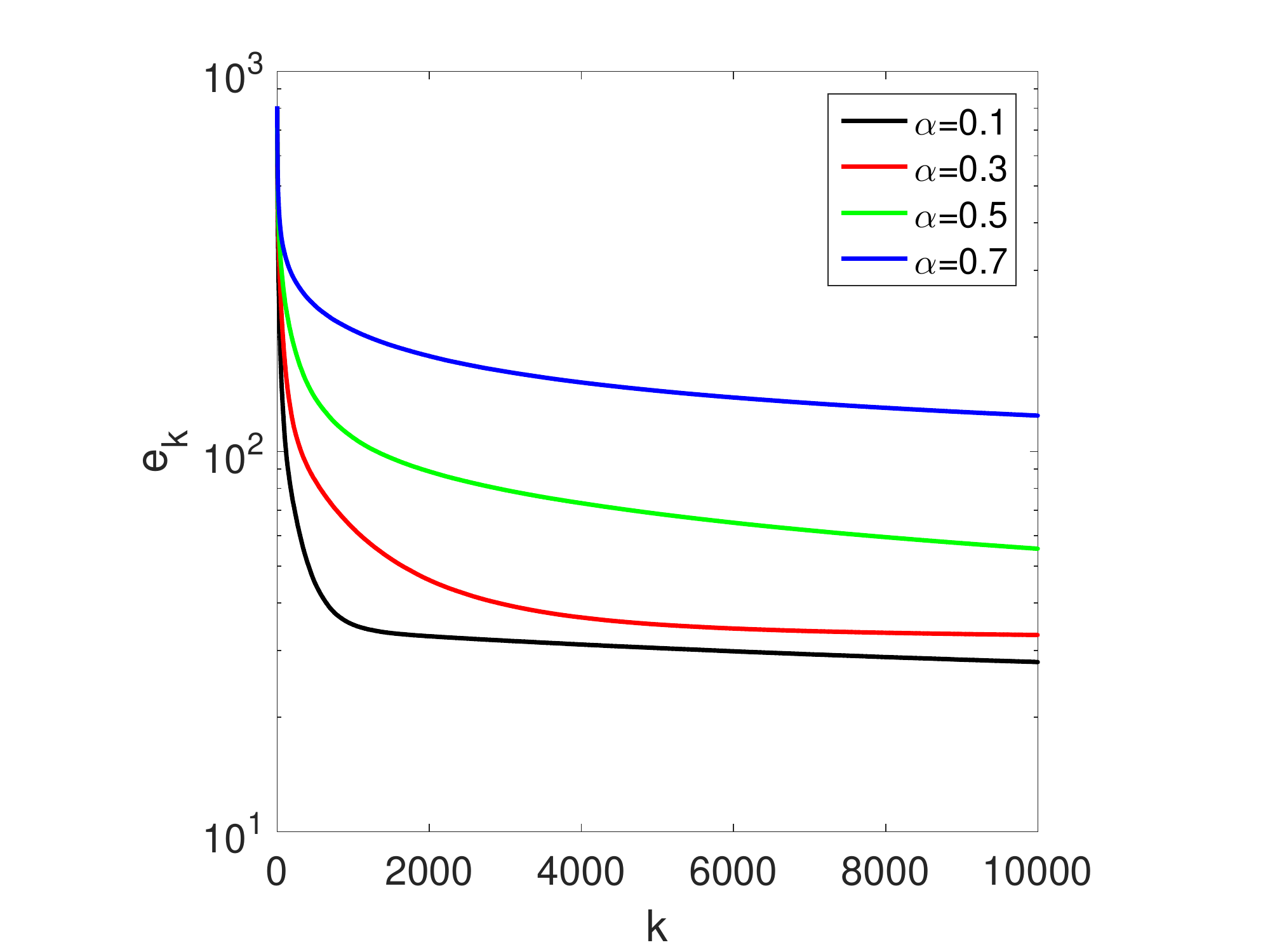} &\includegraphics[trim={2cm 0 2cm 0},clip,width=.30\textwidth]{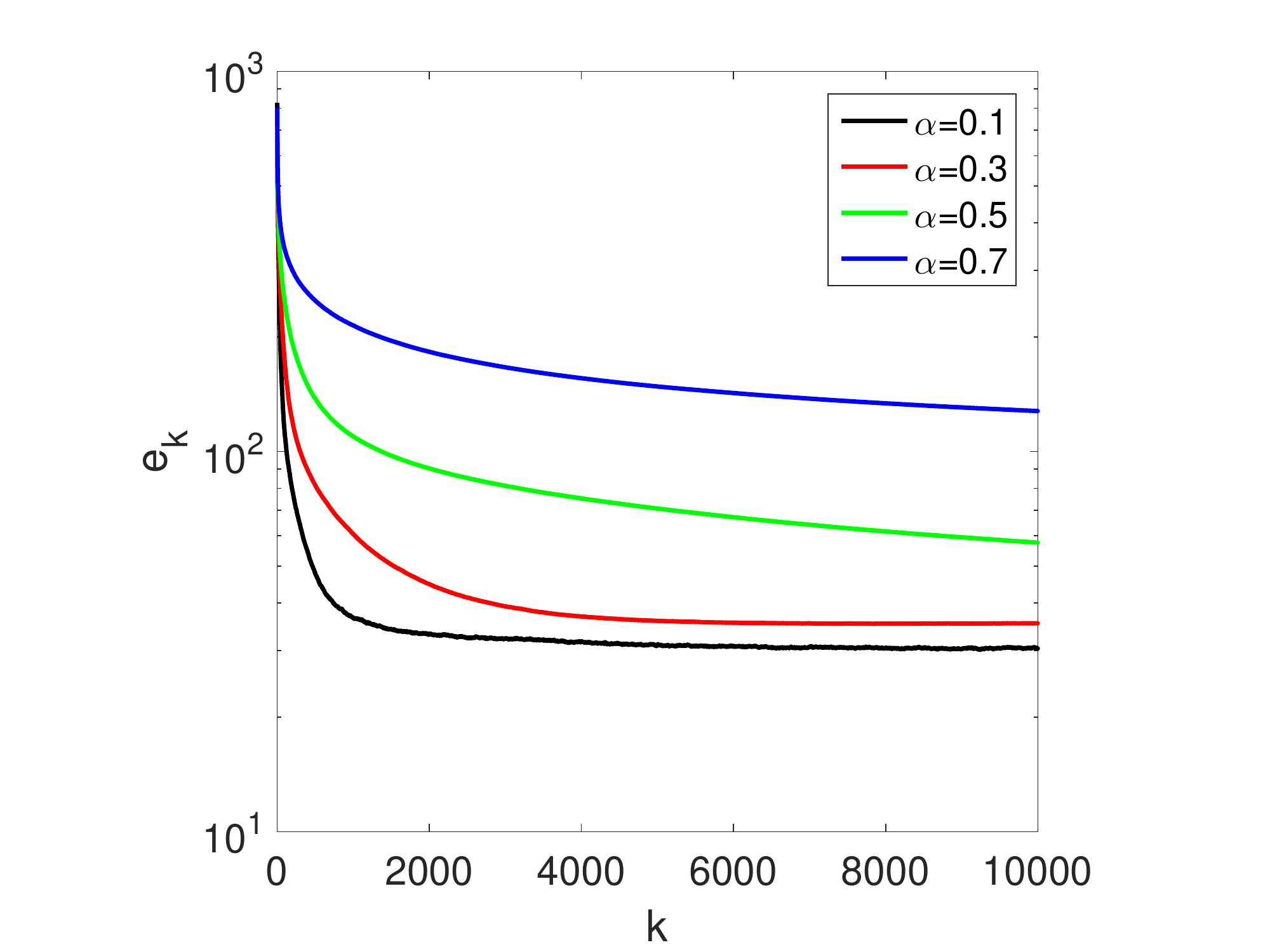}\\
  \includegraphics[trim={2cm 0 2cm 0},clip,width=.30\textwidth]{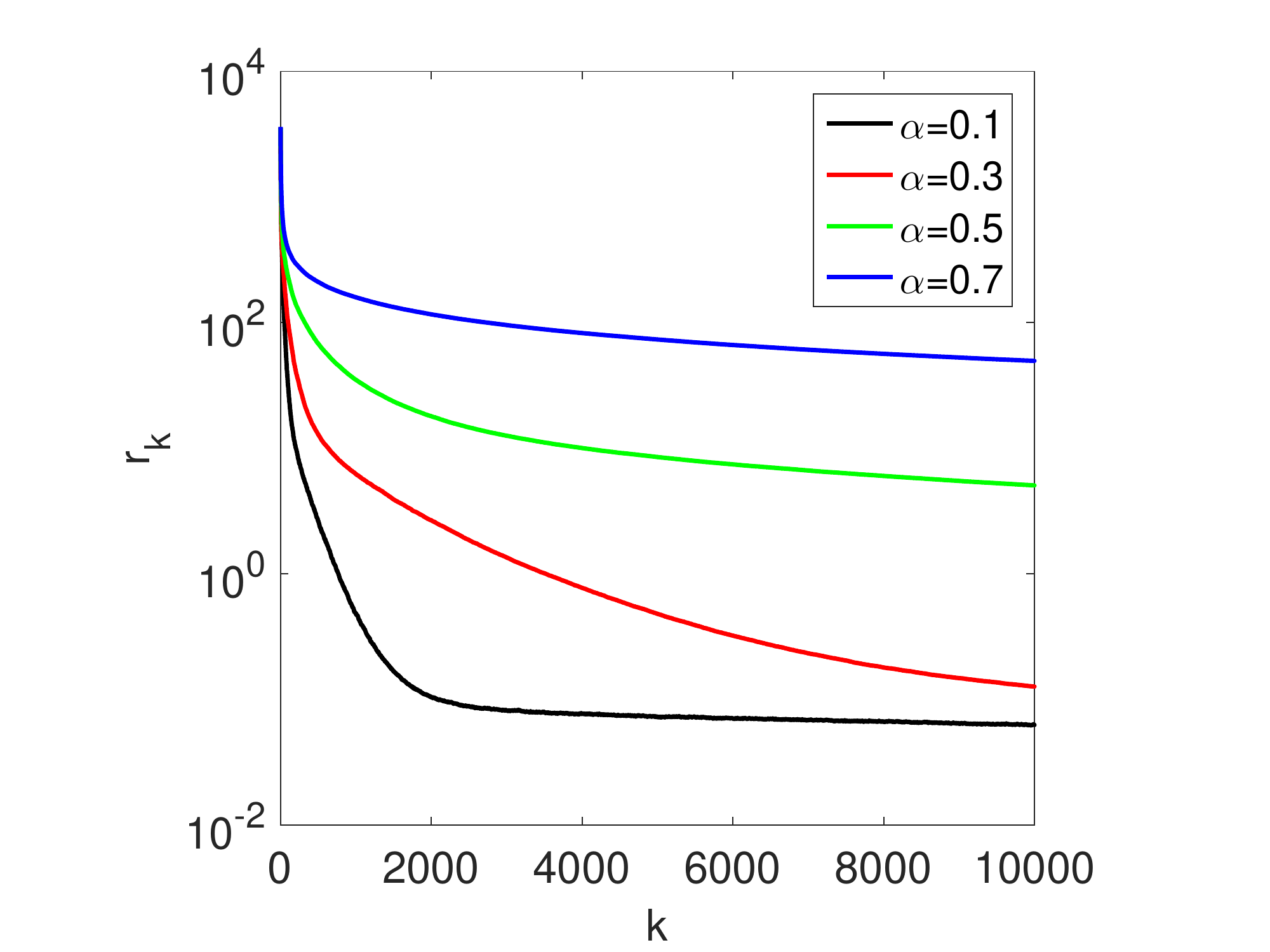} & \includegraphics[trim={2cm 0 2cm 0},clip,width=.30\textwidth]{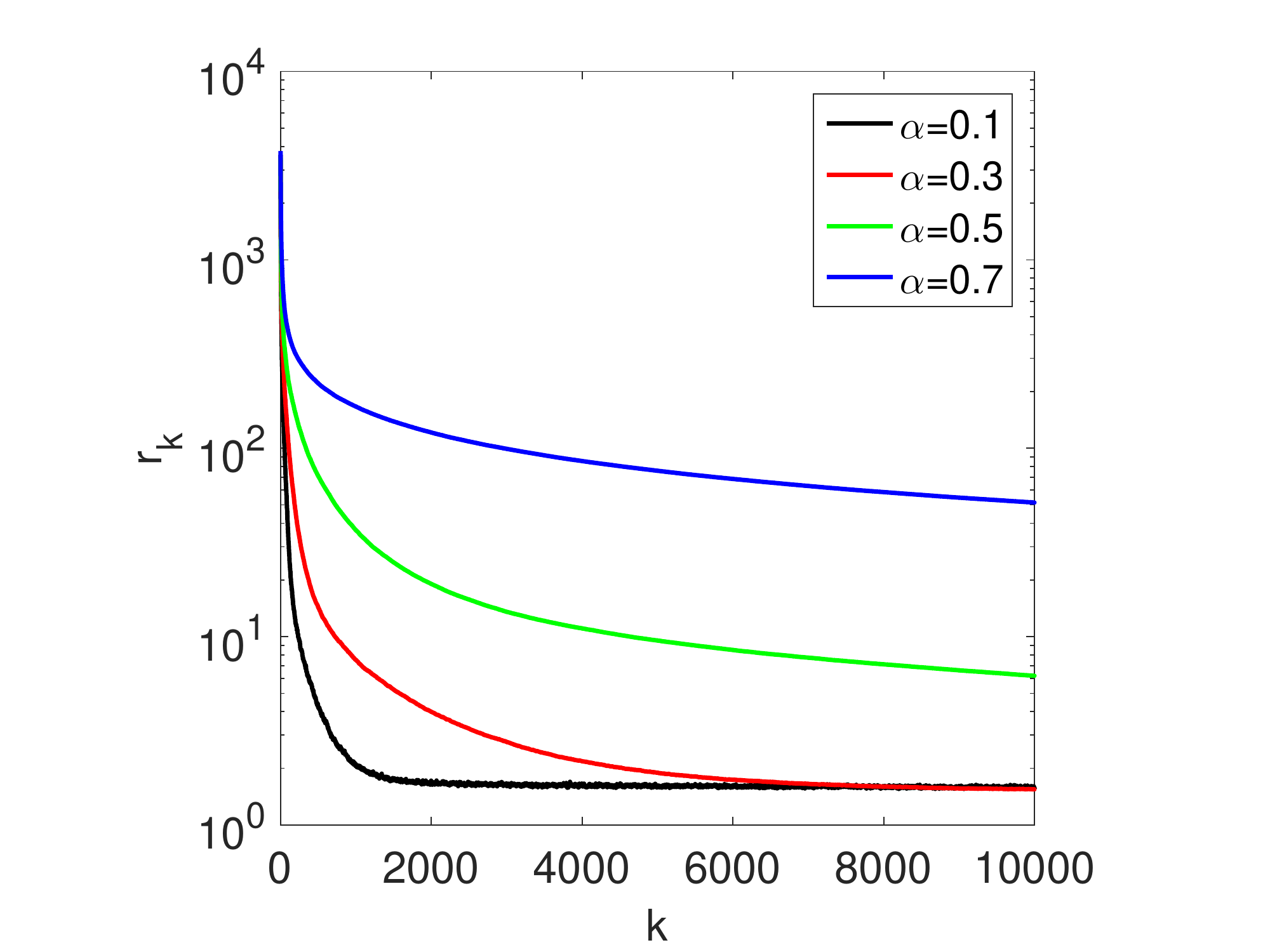} &\includegraphics[trim={2cm 0 2cm 0},clip,width=.30\textwidth]{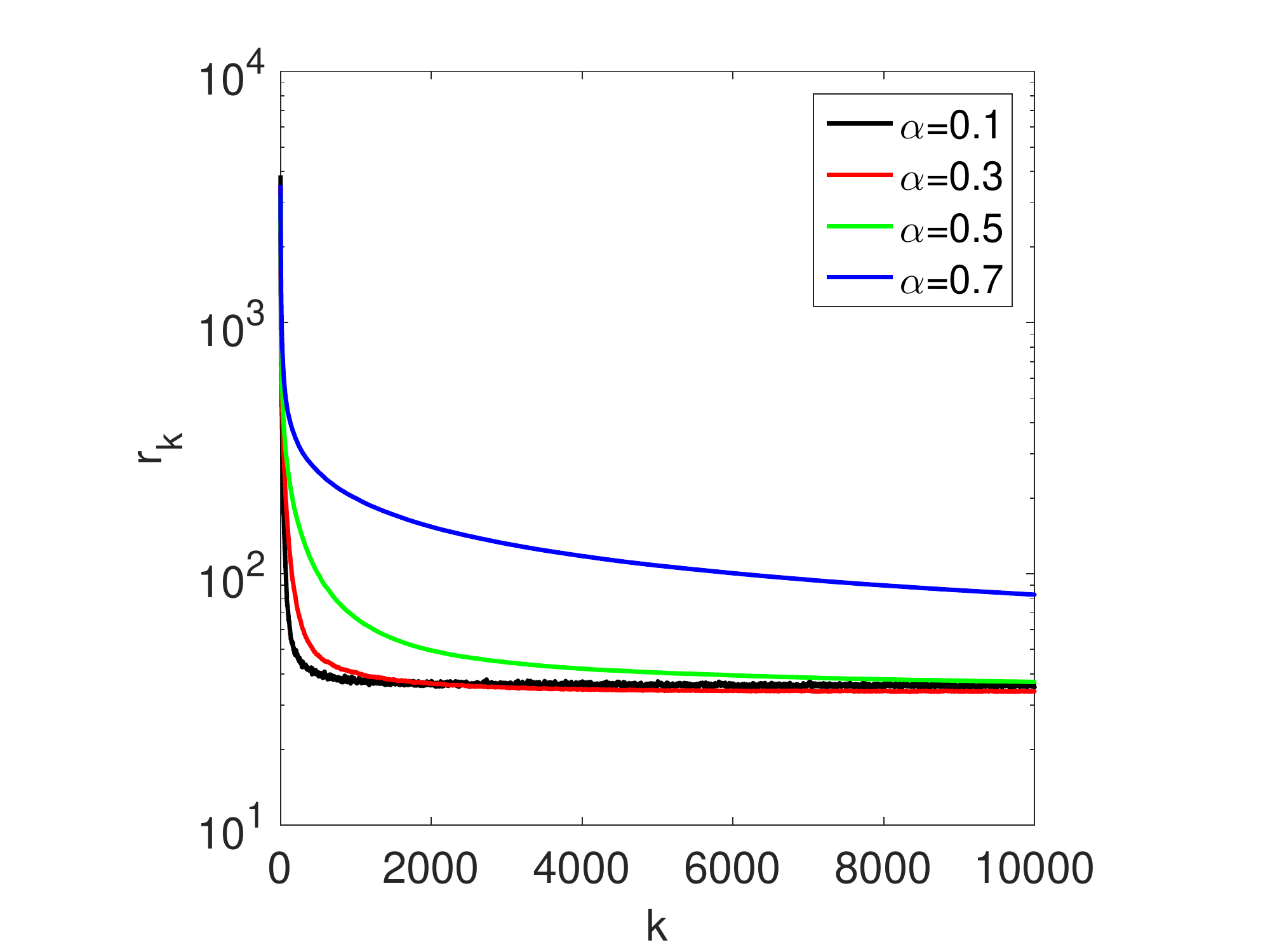}\\
  (a) $\delta=\text{1e-3}$ & (b) $\delta=\text{1e-2}$ & (c) $\delta = \text{5e-2}$
  \end{tabular}
  \caption{Numerical results for \texttt{shaw} with different noise levels by SGD (with various $\alpha$).\label{fig:shaw-sgd-al}}
\end{figure}

\subsection{Comparison with Landweber method}

Since SGD is a randomized version of the classical Landweber method, in
Fig. \ref{fig:land}, we compare their performance. To compare the iteration complexity only,
we count one Landweber iteration as $n$ SGD iterations, and the full gradient evaluation is
indicated by flat segments in the plots. For all examples, the error $e_k$ and residual $r_k$ first
experience fast reduction, and then the error starts to increase,
 which is especially pronounced at $\delta = 5\times 10^{-2}$, exhibiting the typical semiconvergence
behavior. During the initial stage, SGD is much more effective than SGD: indeed
one single loop over all the data can already significantly reduce the error $e_k$ and produce
an acceptable approximation. The precise
mechanism for this interesting observation will be further examined below. However, the nonvanishing
variance of the stochastic gradient slows down the asymptotic convergence of SGD, and
the error $e_k$ and the residual $r_k$ eventually tend to oscillate for noisy data, before finally diverge.

\begin{figure}[hbt!]
  \centering
  \setlength{\tabcolsep}{0pt}
  \begin{tabular}{cc}
   \includegraphics[trim={2cm 0 2cm 0},clip,width=.25\textwidth]{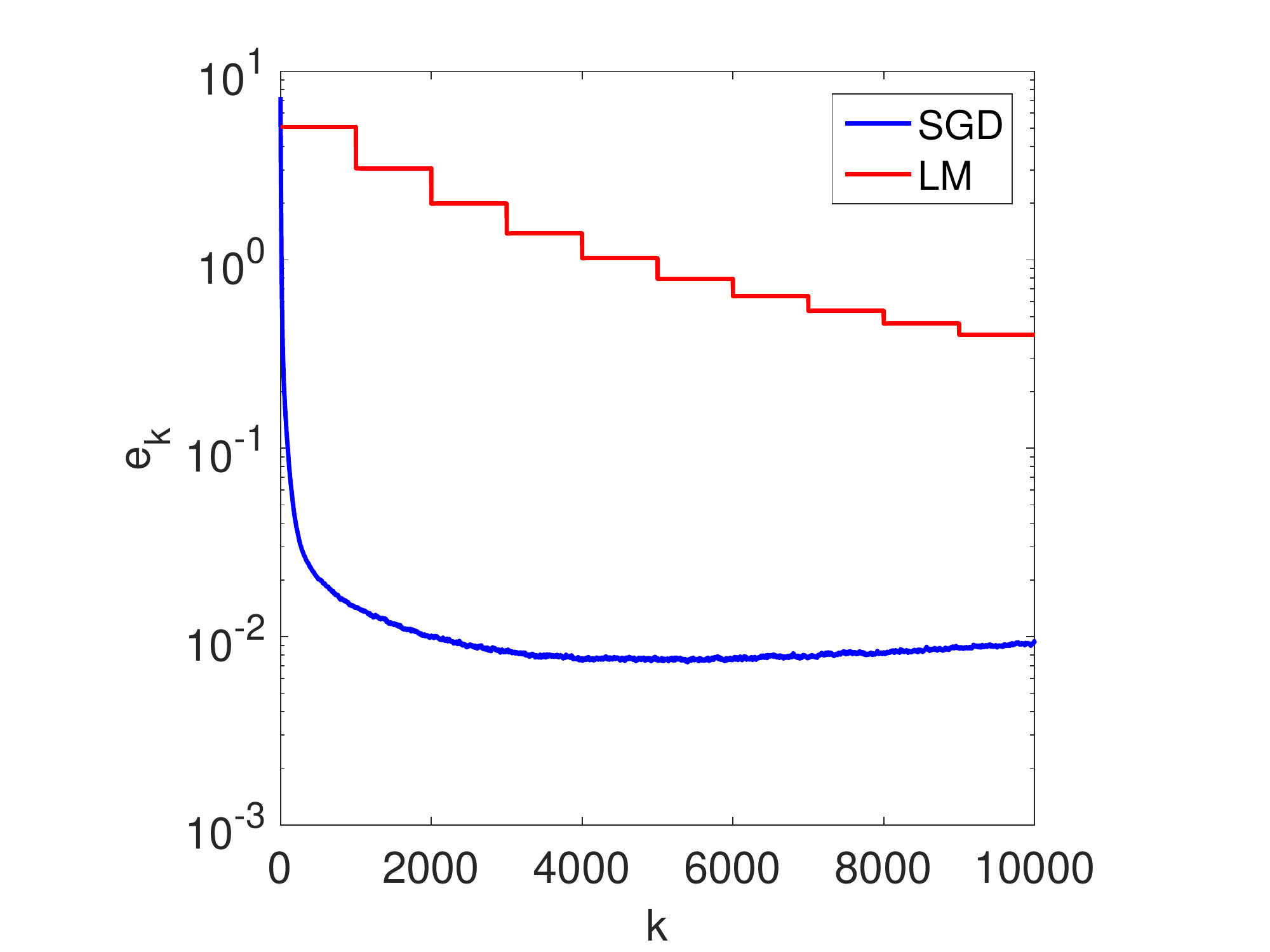} \includegraphics[trim={2cm 0 2cm 0},clip,width=.25\textwidth]{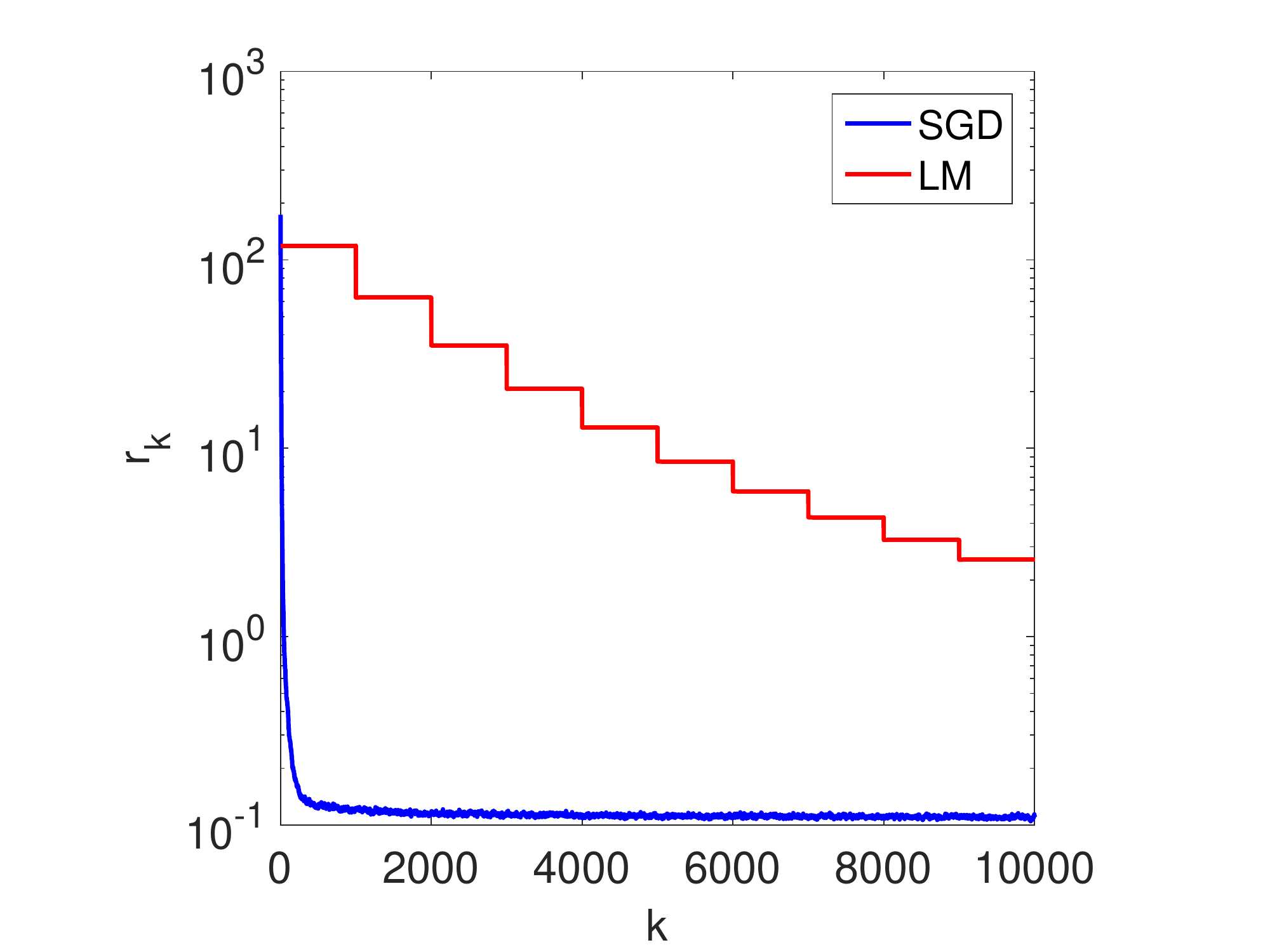}& \includegraphics[trim={2cm 0 2cm 0},clip,width=.25\textwidth]{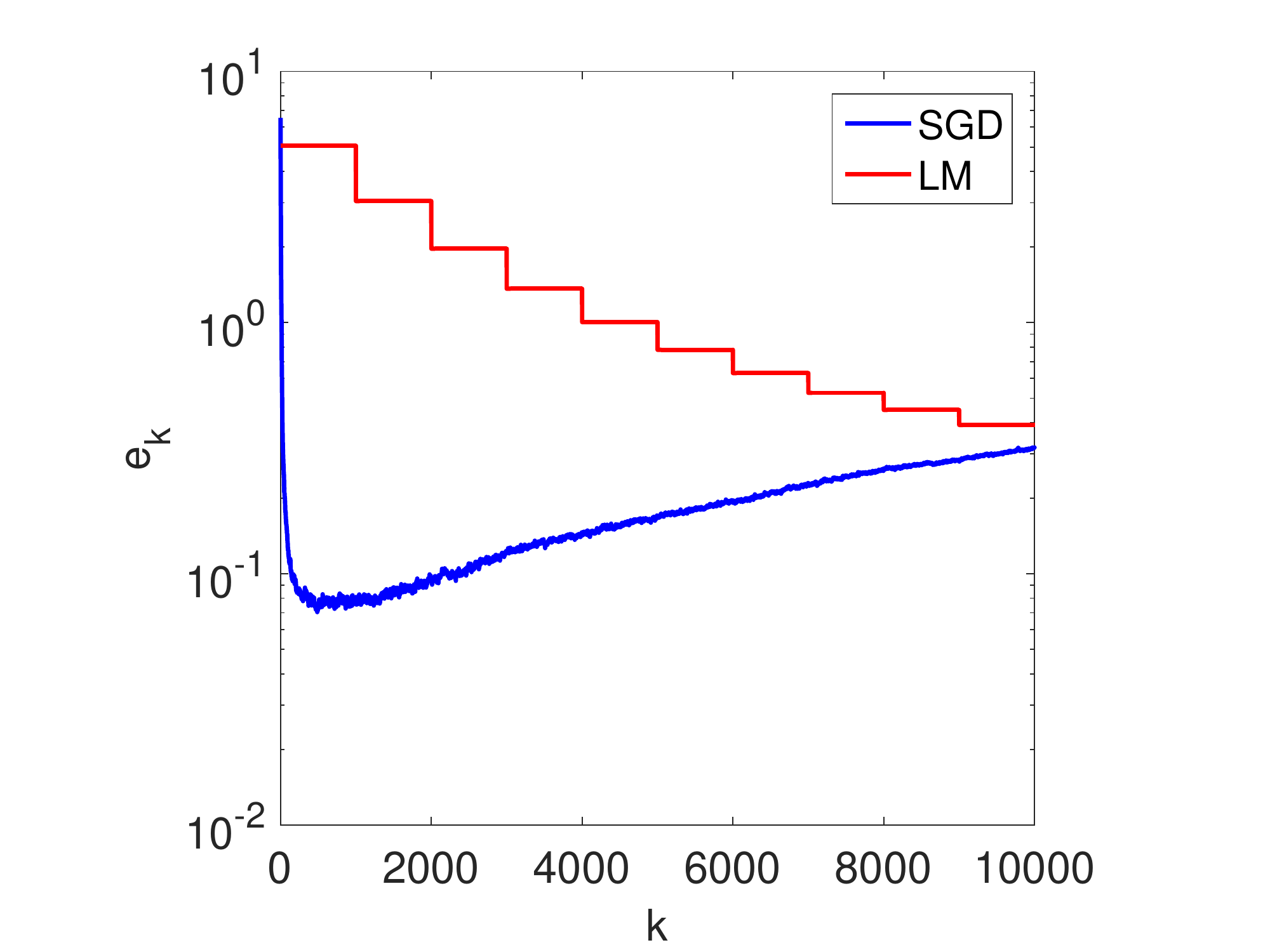} \includegraphics[trim={2cm 0 2cm 0},clip,width=.25\textwidth]{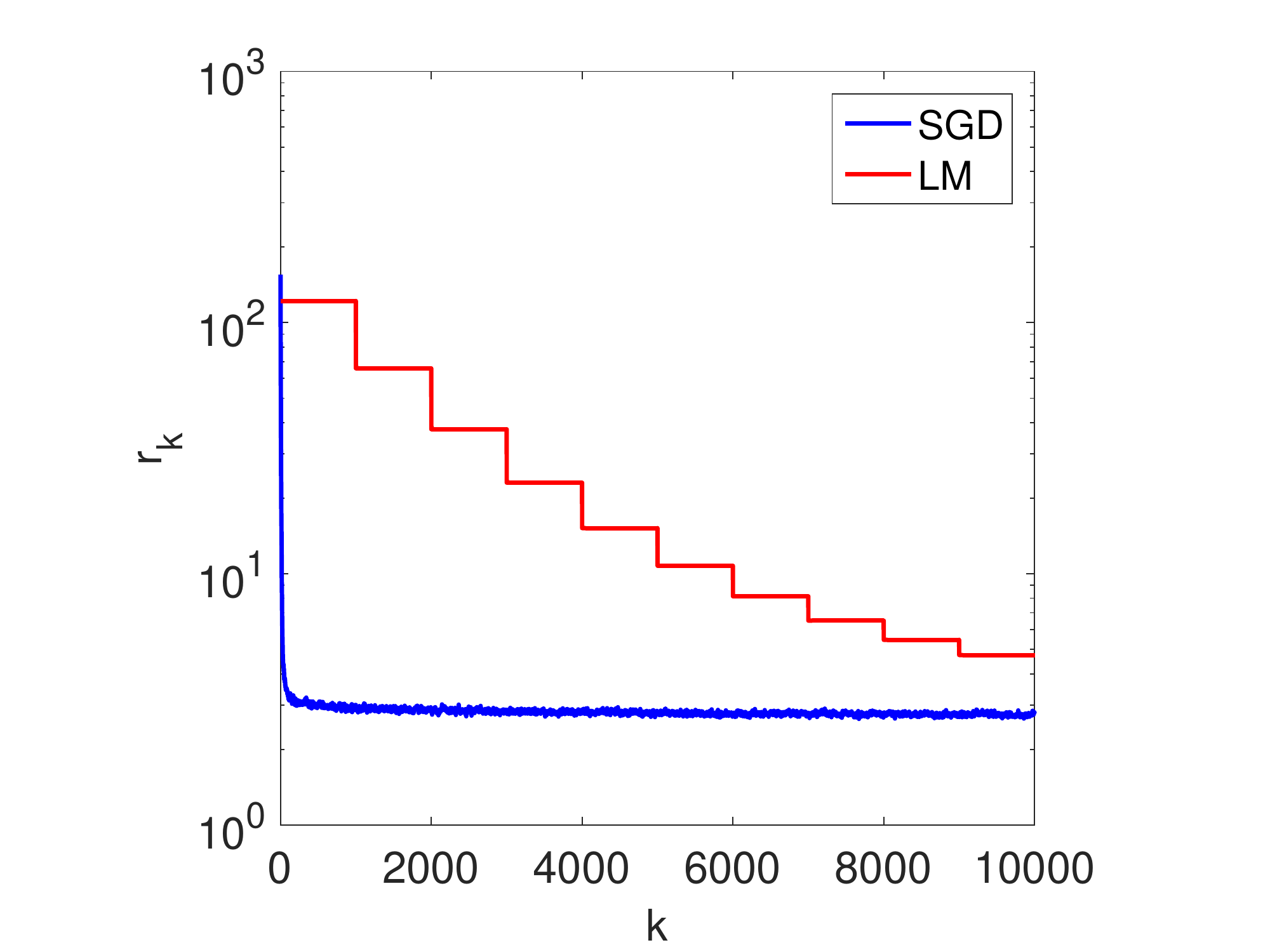}\\
   (a) \texttt{phillips}, $\delta=10^{-2}$ & (b) \texttt{phillips}, $\delta=5\times10^{-2}$ \\
   \includegraphics[trim={2cm 0 2cm 0},clip,width=.25\textwidth]{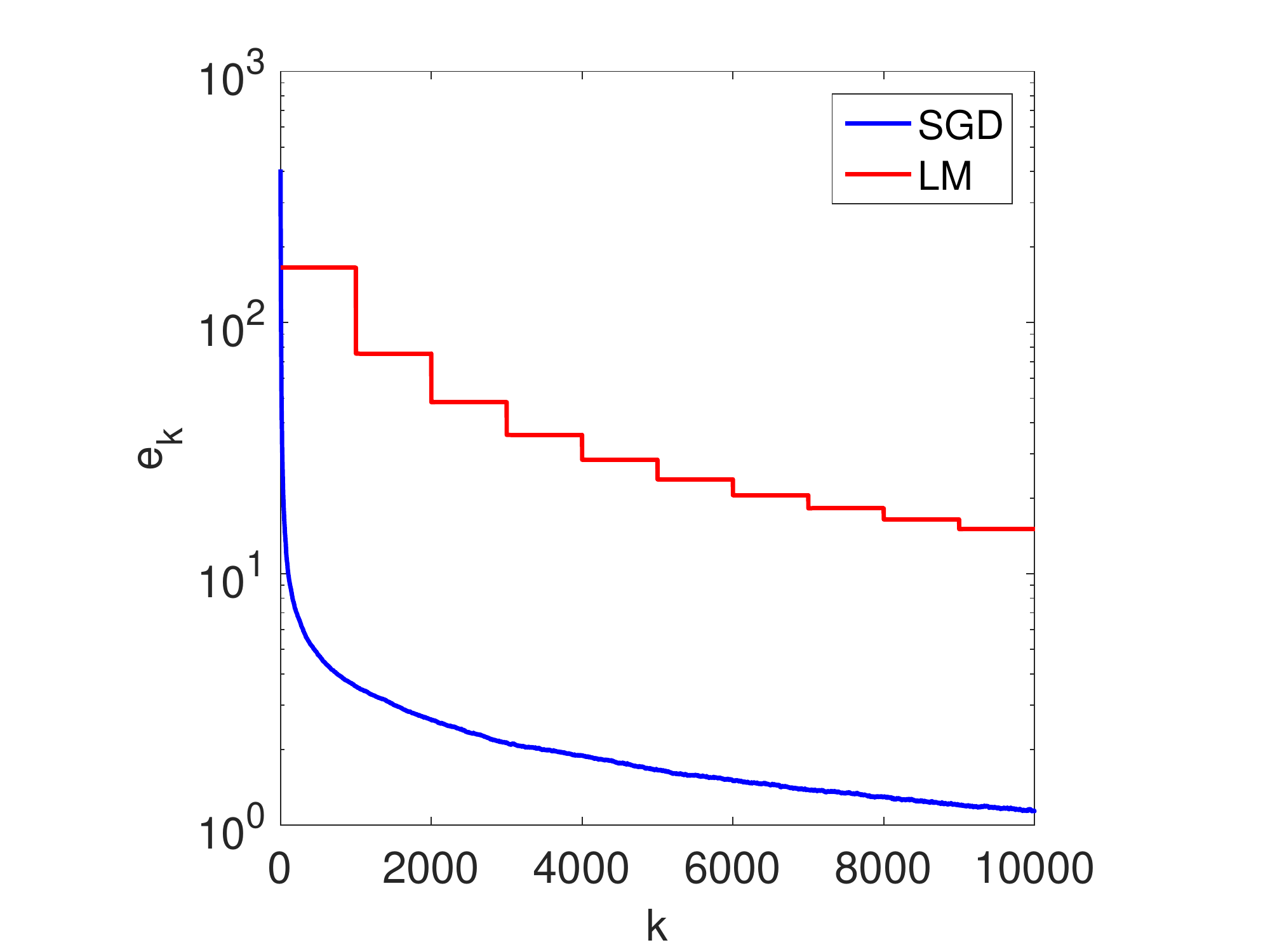} \includegraphics[trim={2cm 0 2cm 0},clip,width=.25\textwidth]{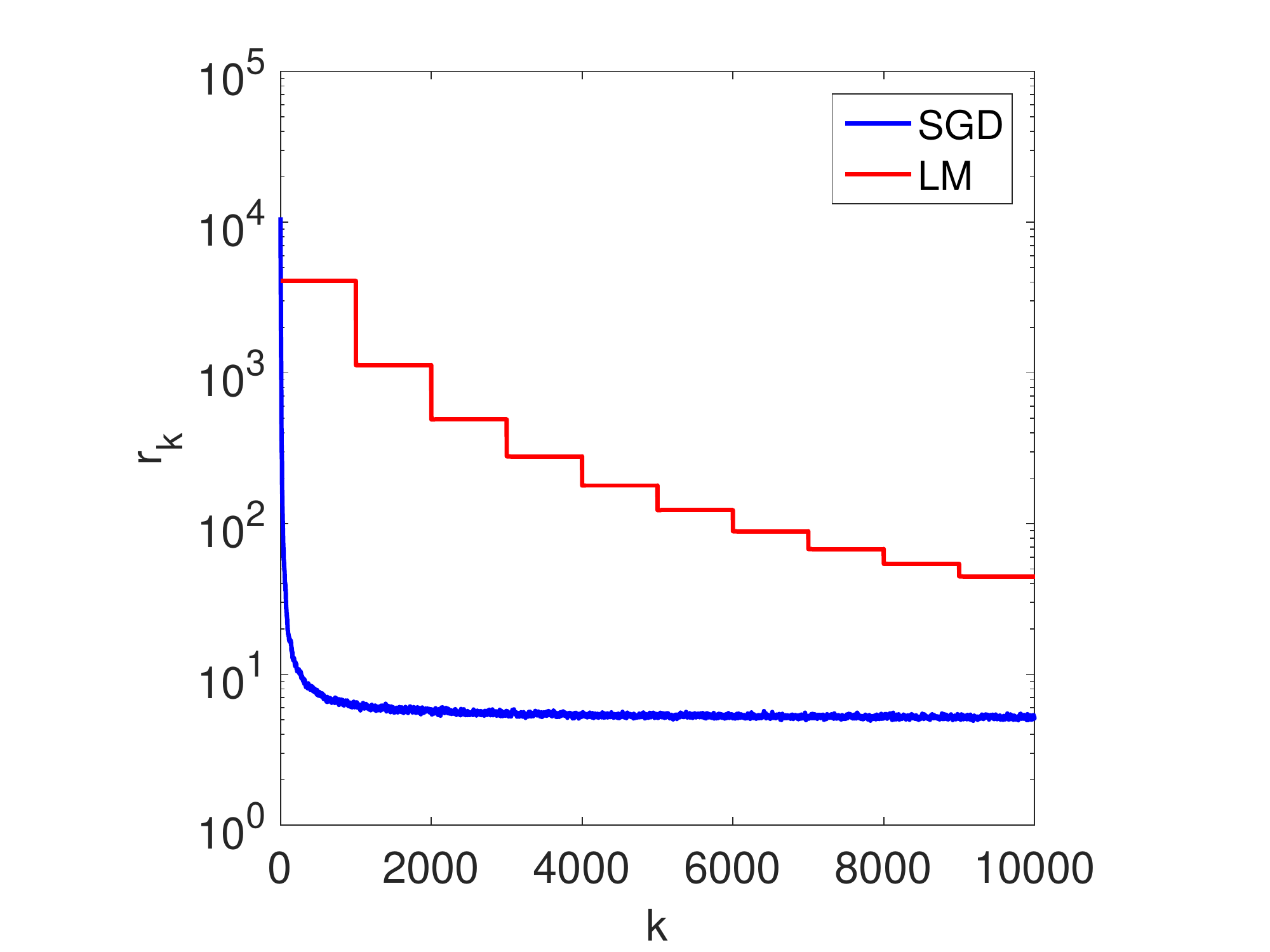}& \includegraphics[trim={2cm 0 2cm 0},clip,width=.25\textwidth]{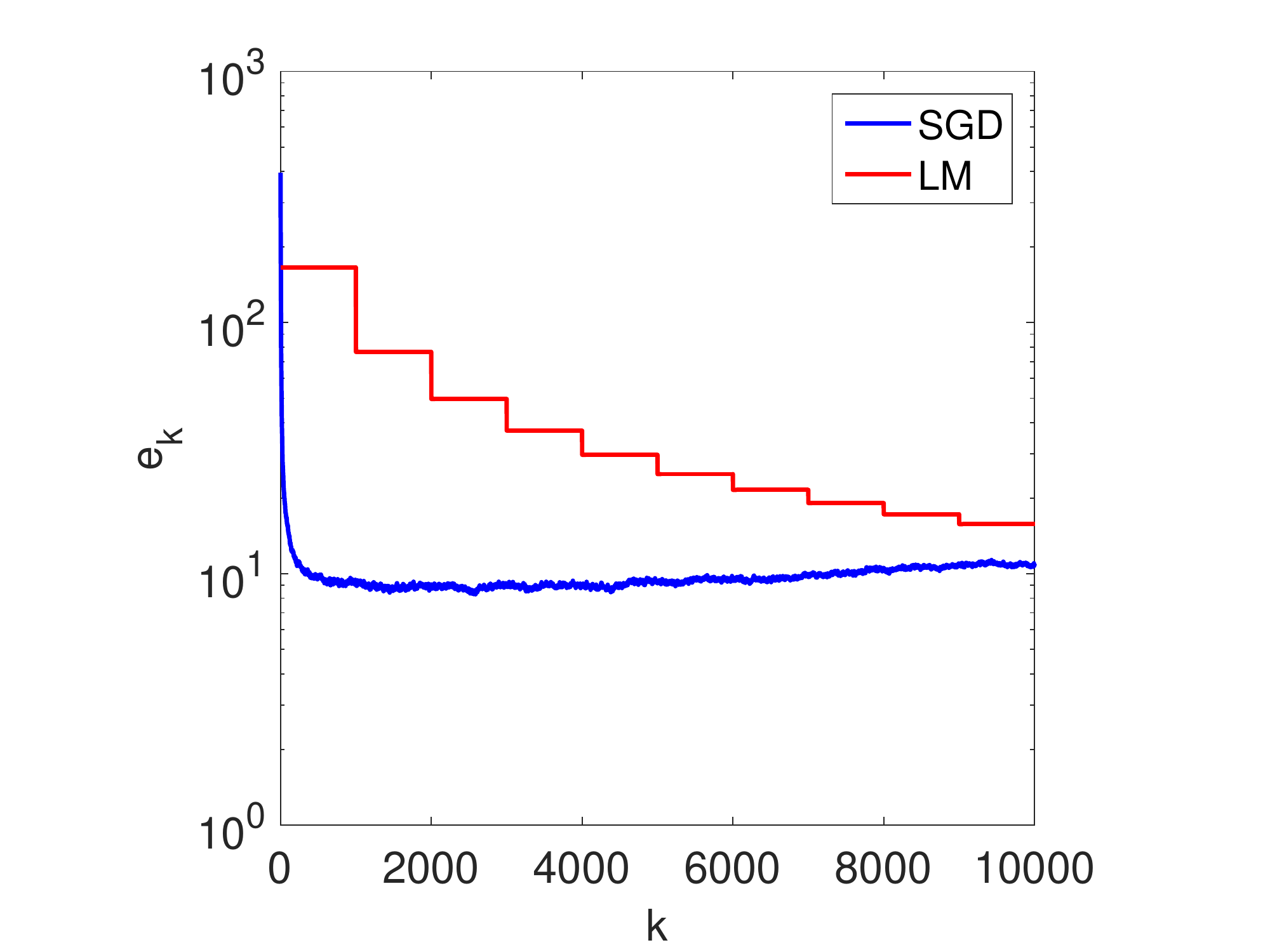} \includegraphics[trim={2cm 0 2cm 0},clip,width=.25\textwidth]{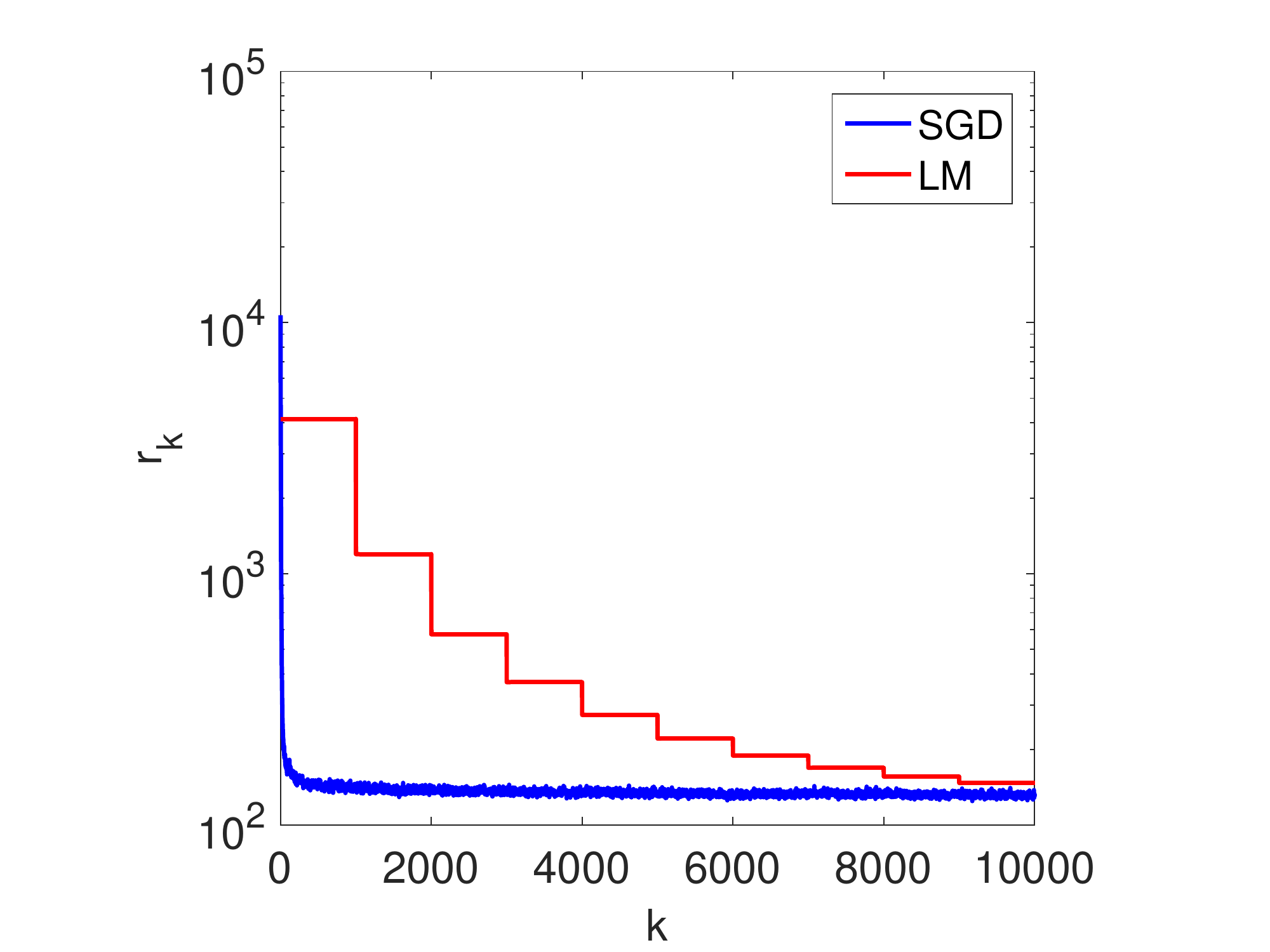}\\
   (c) \texttt{gravity}, $\delta=10^{-2}$ & (d) \texttt{gravity}, $\delta=5\times10^{-2}$  \\
   \includegraphics[trim={2cm 0 2cm 0},clip,width=.25\textwidth]{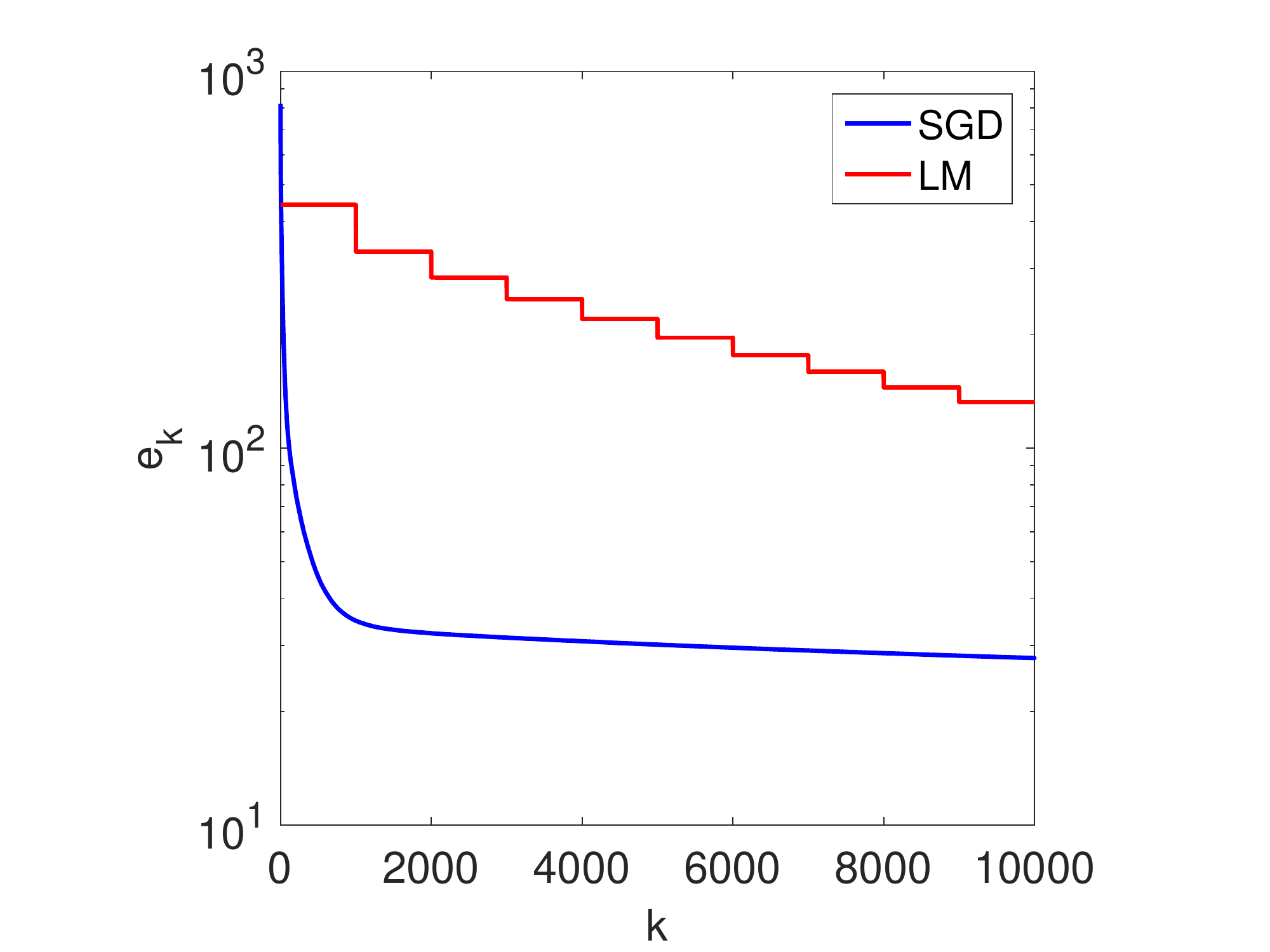} \includegraphics[trim={2cm 0 2cm 0},clip,width=.25\textwidth]{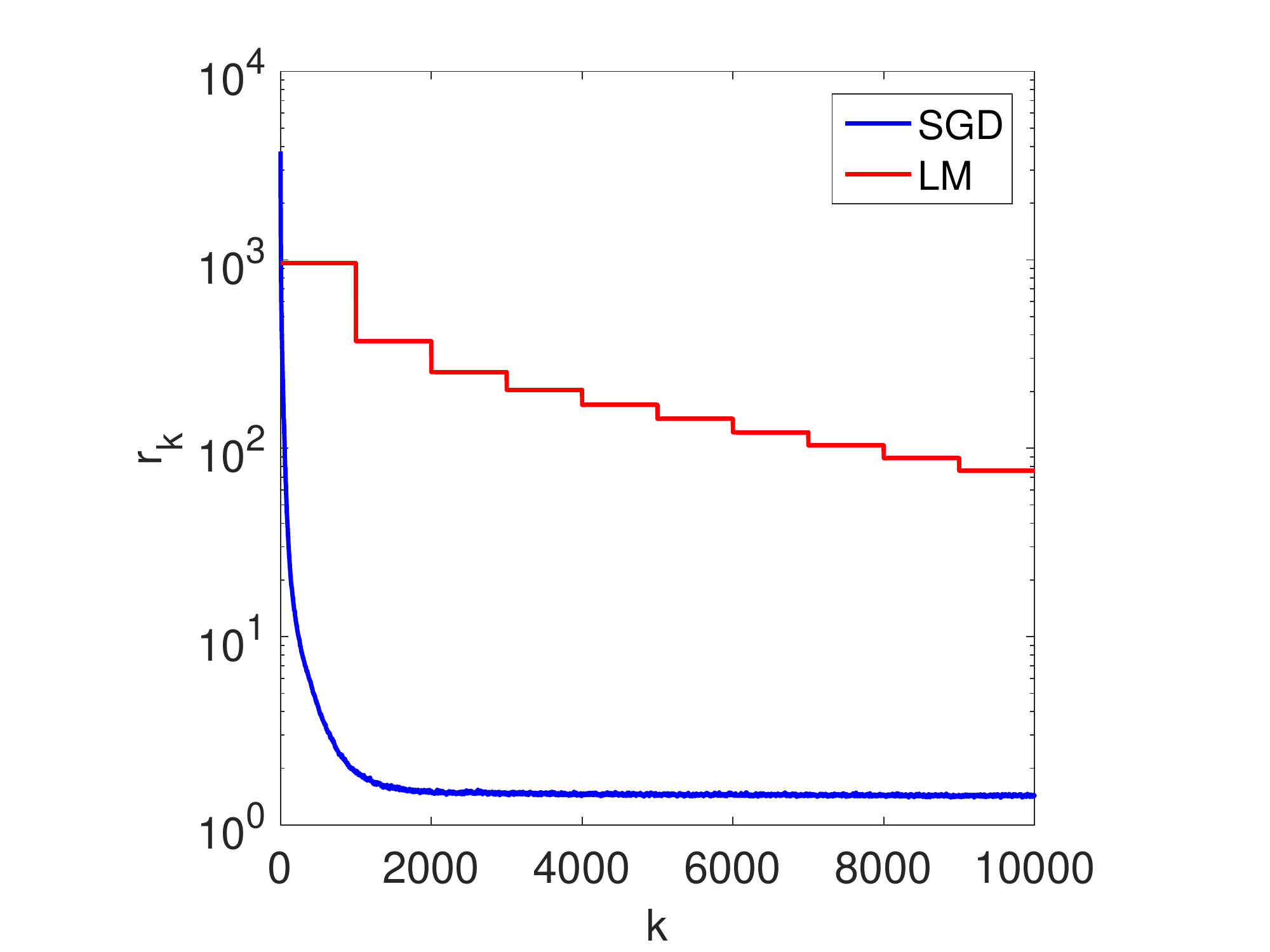} & \includegraphics[trim={2cm 0 2cm 0},clip,width=.25\textwidth]{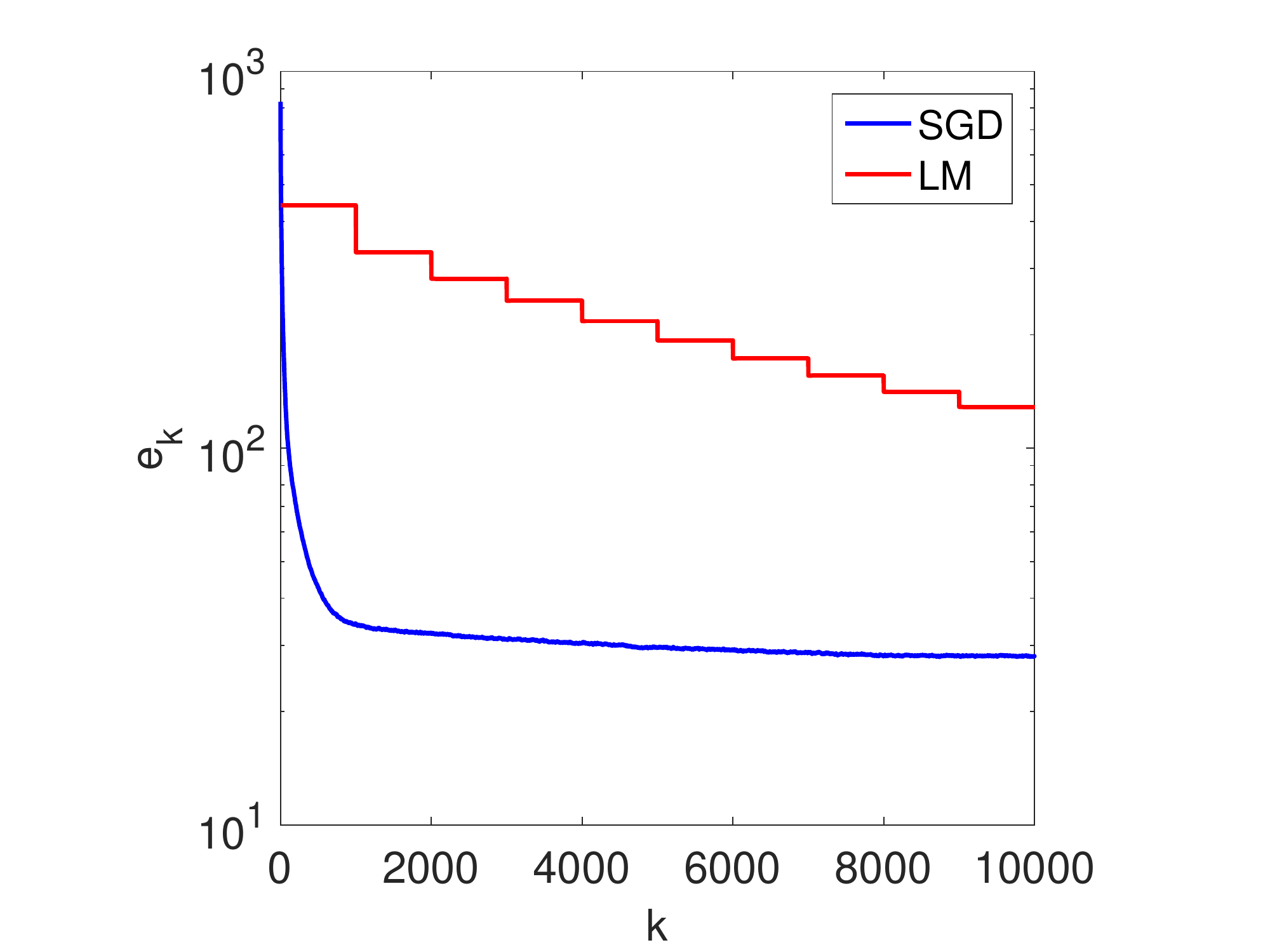} \includegraphics[trim={2cm 0 2cm 0},clip,width=.25\textwidth]{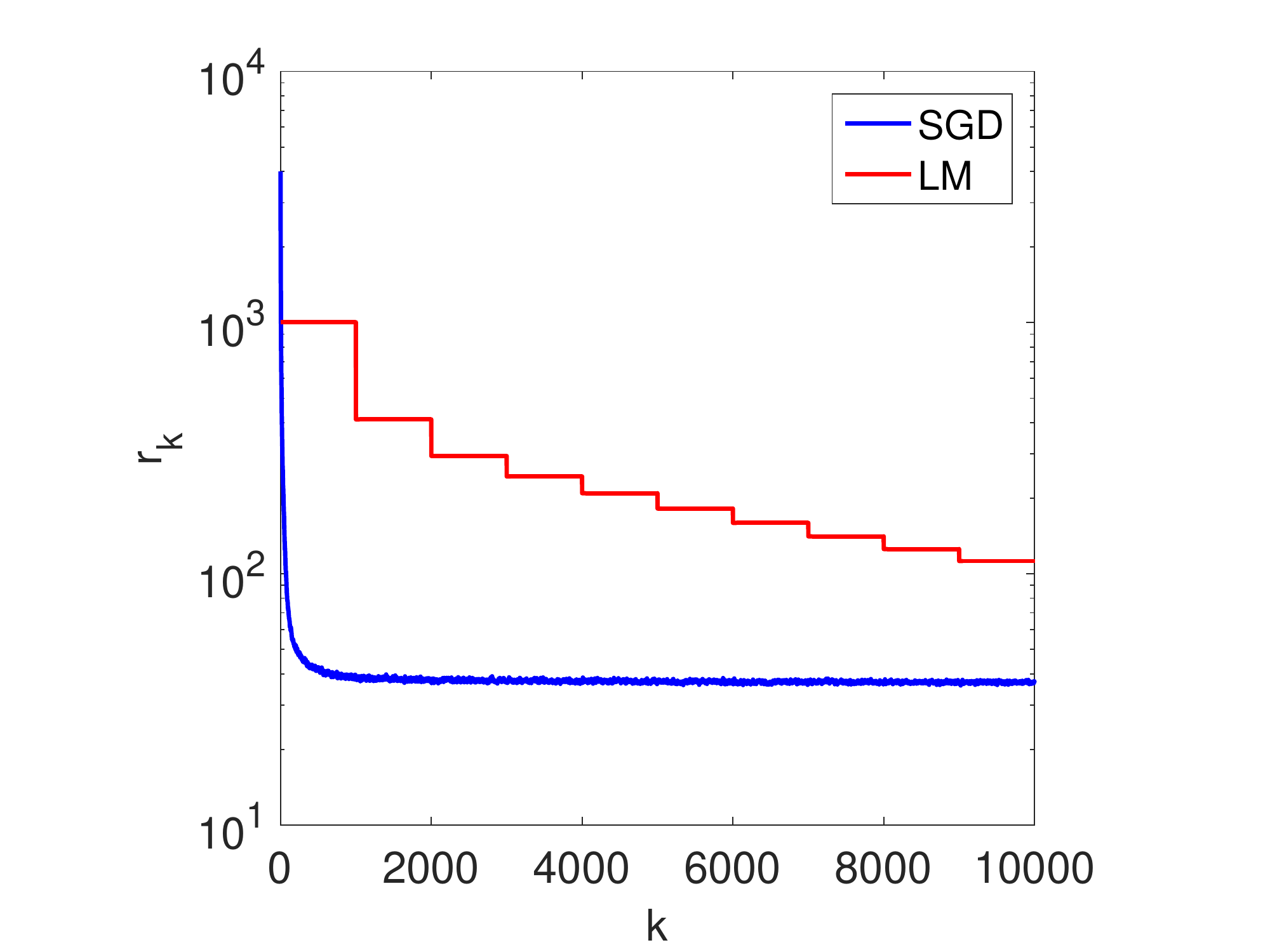}\\
   (e) \texttt{shaw}, $\delta=10^{-2}$ & (f) \texttt{shaw}, $\delta=5\times10^{-2}$
  \end{tabular}
  \caption{Numerical results for the examples by SGD (with $\alpha=0.1$) and LM.\label{fig:land}}
\end{figure}

\subsection{Preasymptotic convergence}

Now we examine the preasymptotic strong convergence of SGD (note that the weak error satisfies a
Landweber type iteration). Theorem \ref{thm:asympt-noisy} (and Lemma \ref{thm:asympt-exact}) predicts that
during first iterations, the low-frequency error $e_L:=\E[\|P_\mathcal{L}e_k\|^2]$ decreases rapidly, but the
high-frequency error $e_H:=\E[\|P_\mathcal{H}e_k\|^2]$ can at best decay mildly. For all examples, the first five
singular vectors can well capture the total energy of the initial error $e_1=x^*-x_1$, which suggests
a truncation level $L=5$ for the numerical illustration. We plot the low- and high-frequency errors $e_L$ and
$e_H$ and the total error $e=\E[\|e_k\|^2]$ in Fig. \ref{fig:decom}. The low-frequency error $e_L$ decays
much more rapidly during the initial iterations, and since under the source condition \eqref{eqn:source},
$e_L$ is indeed dominant, the total error $e$ also enjoys a fast initial decay. Intuitively, this
behavior may be explained as follows. The rows of the matrix ${A}$ mainly contain low-frequency modes,
and thus each SGD iteration tends to mostly remove the low-frequency component $e_L$ of the initial error
$x^*-x_1$. The high-frequency component $e_H$ experiences a similar but much slower decay. Eventually, both components
level off and oscillate, due to the deleterious effect of noise. These observations confirm the preasymptotic analysis in Section \ref{app:preasympt}. For
noisy data, the error $e_k$ can be highly oscillating, so is the residual $r_k$. The larger the noise
level $\delta$ is, the larger the oscillation magnitude becomes.

\begin{figure}[hbt!]
  \centering
  \setlength{\tabcolsep}{0pt}
  \begin{tabular}{ccc}
    \includegraphics[trim={2cm 0 2cm 0},clip,width=.33\textwidth]{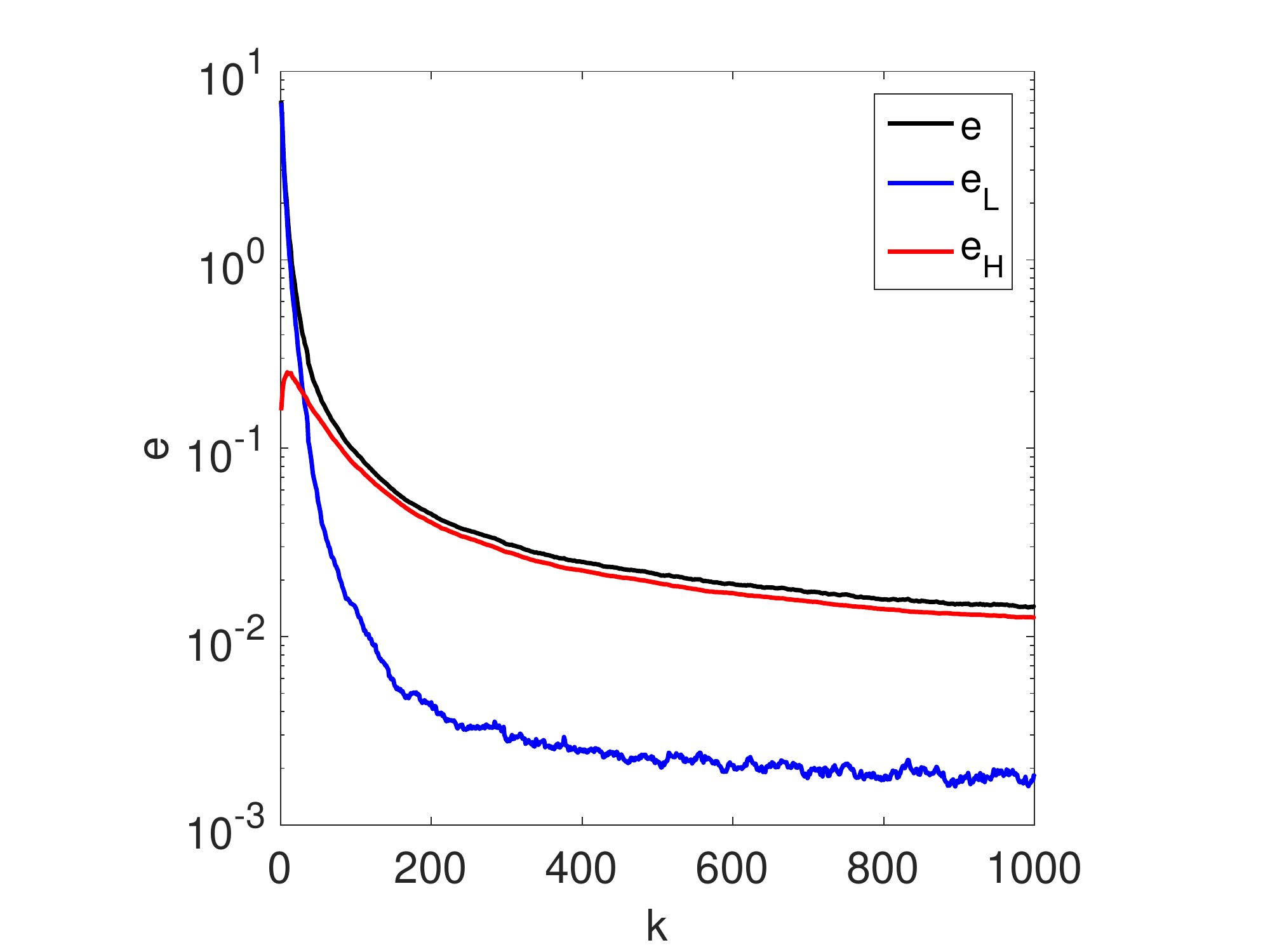} & \includegraphics[trim={2cm 0 2cm 0},clip,width=.33\textwidth]{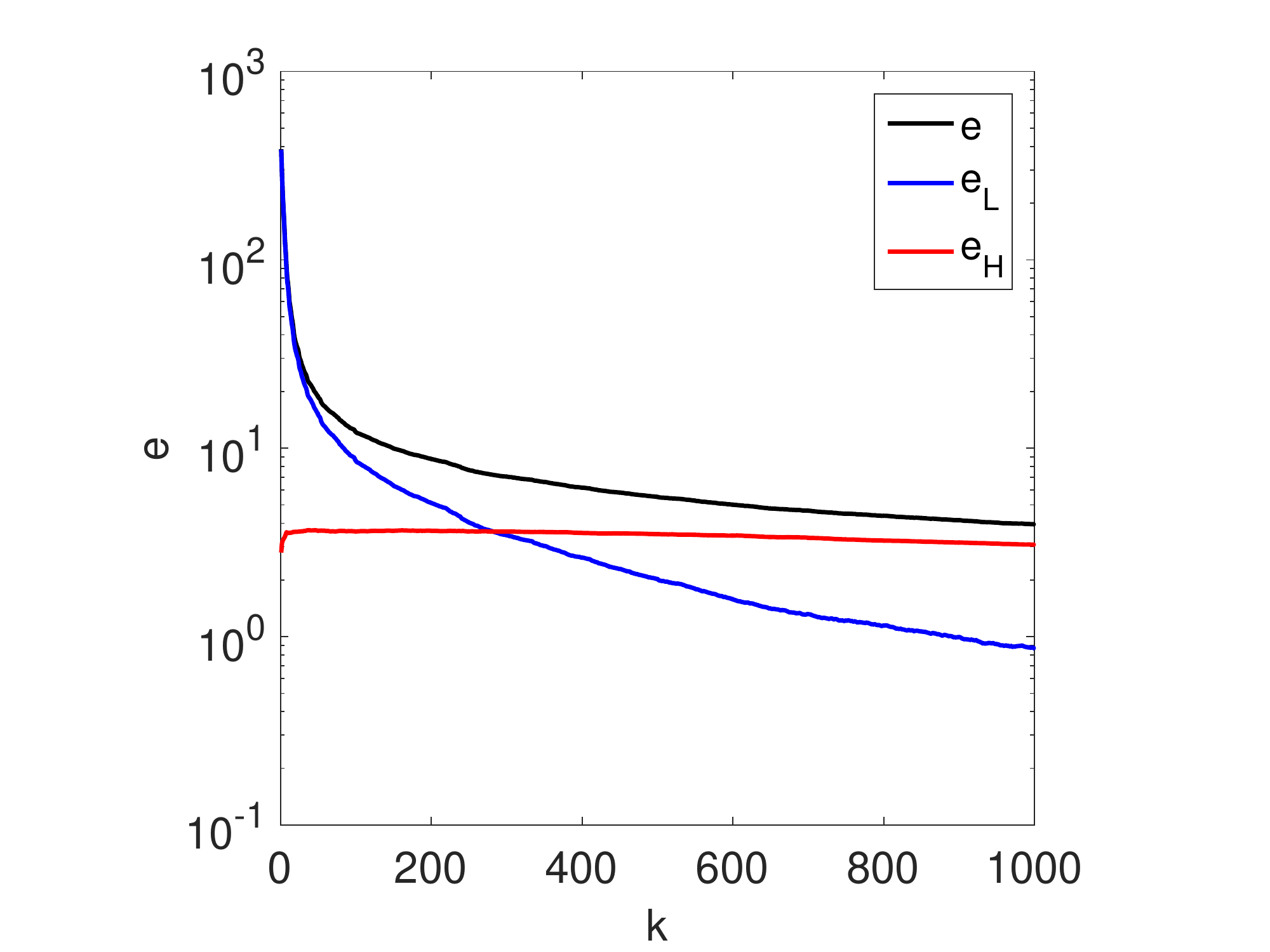}
   &\includegraphics[trim={2cm 0 2cm 0},clip,width=.33\textwidth]{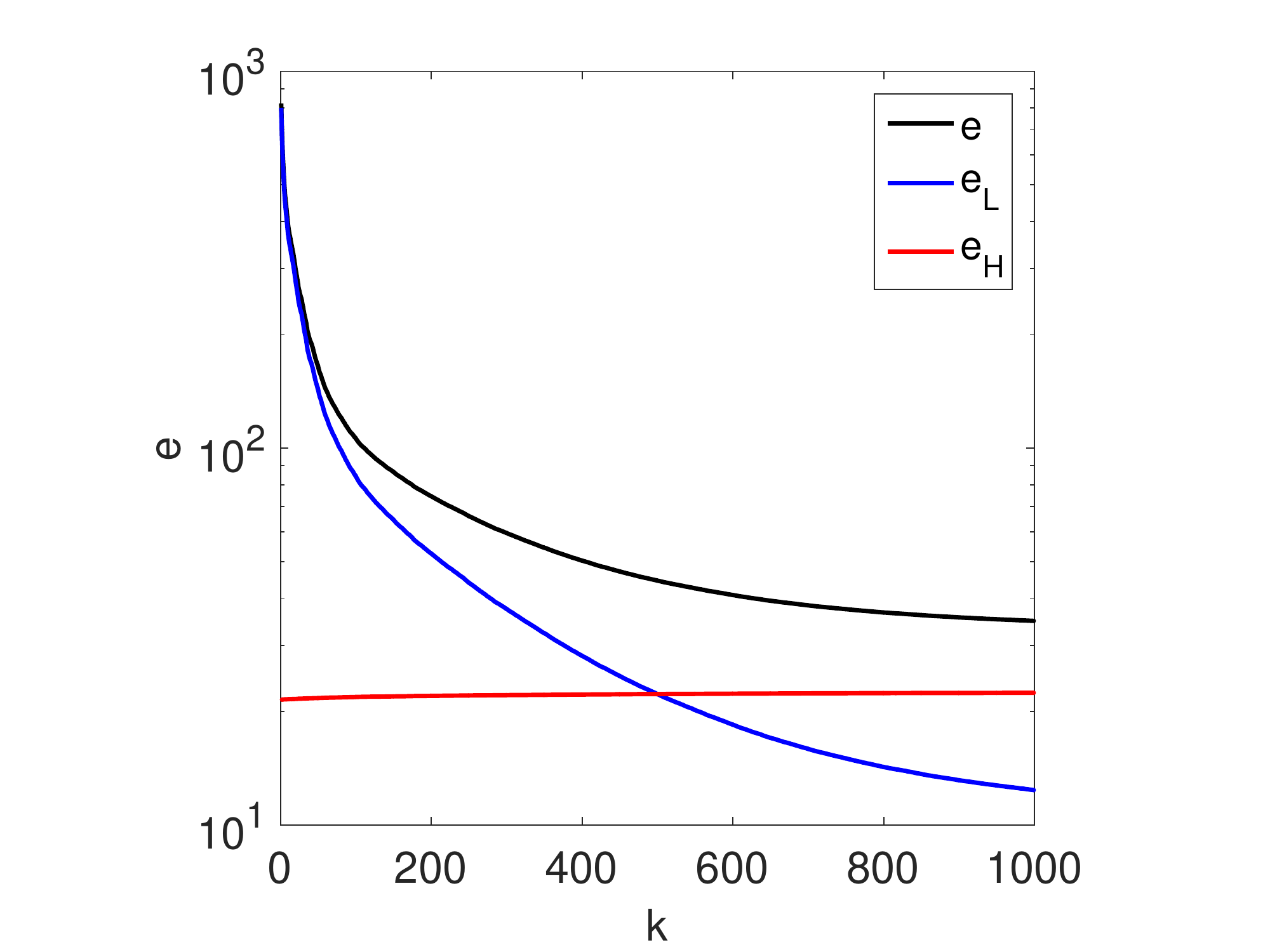}\\
   \includegraphics[trim={2cm 0 2cm 0},clip,width=.33\textwidth]{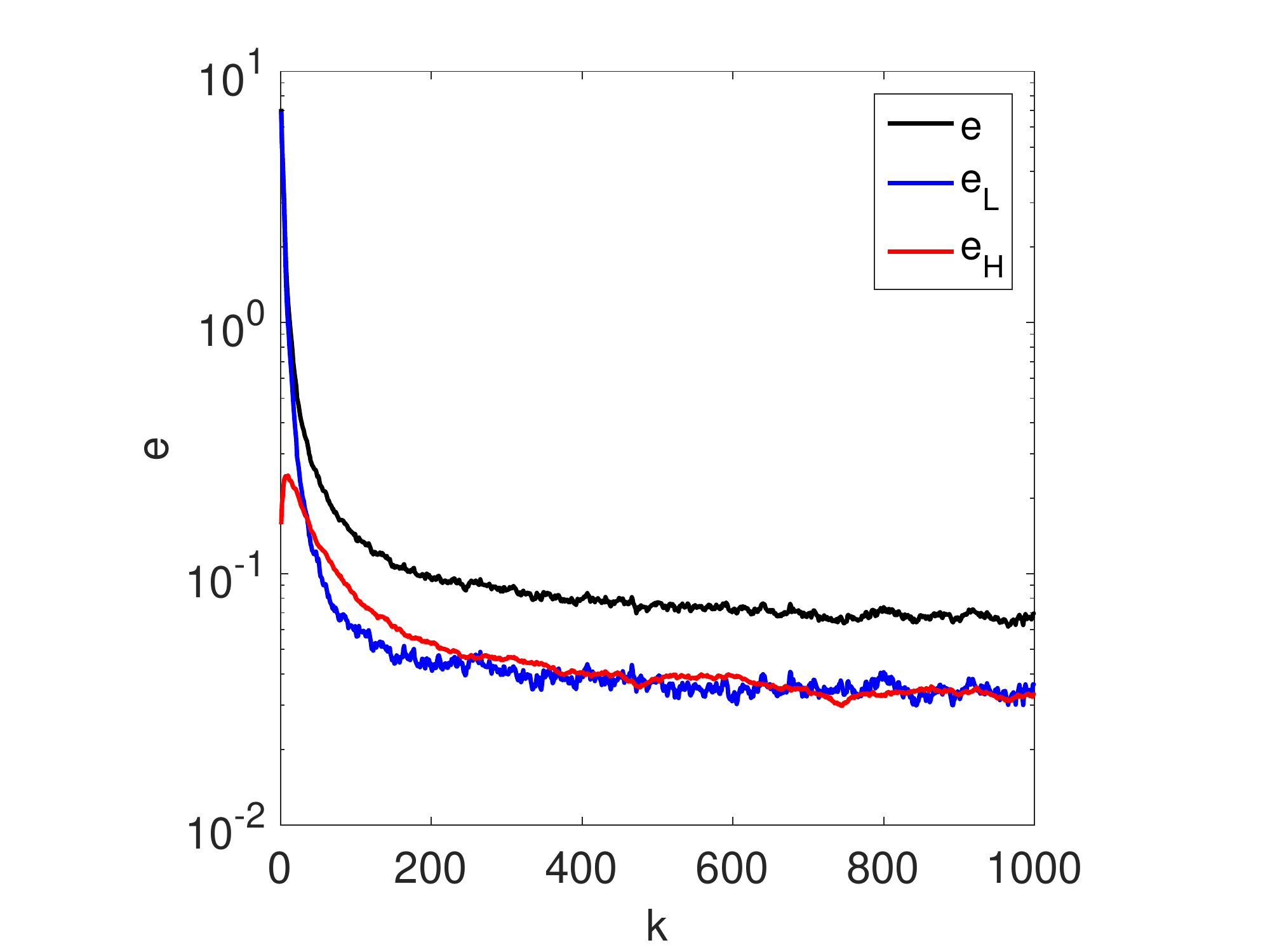} & \includegraphics[trim={2cm 0 2cm 0},clip,width=.33\textwidth]{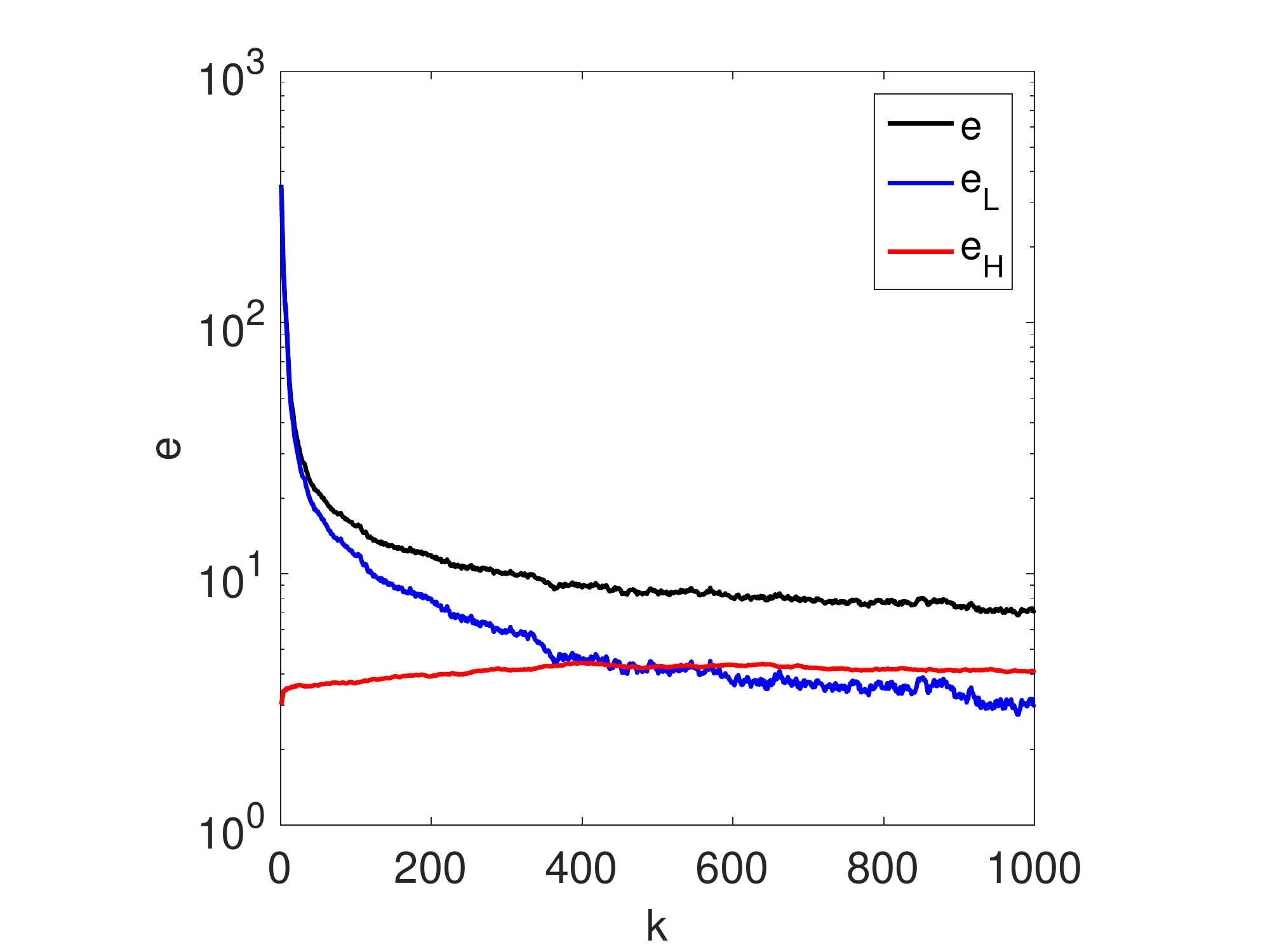}
   &\includegraphics[trim={2cm 0 2cm 0},clip,width=.33\textwidth]{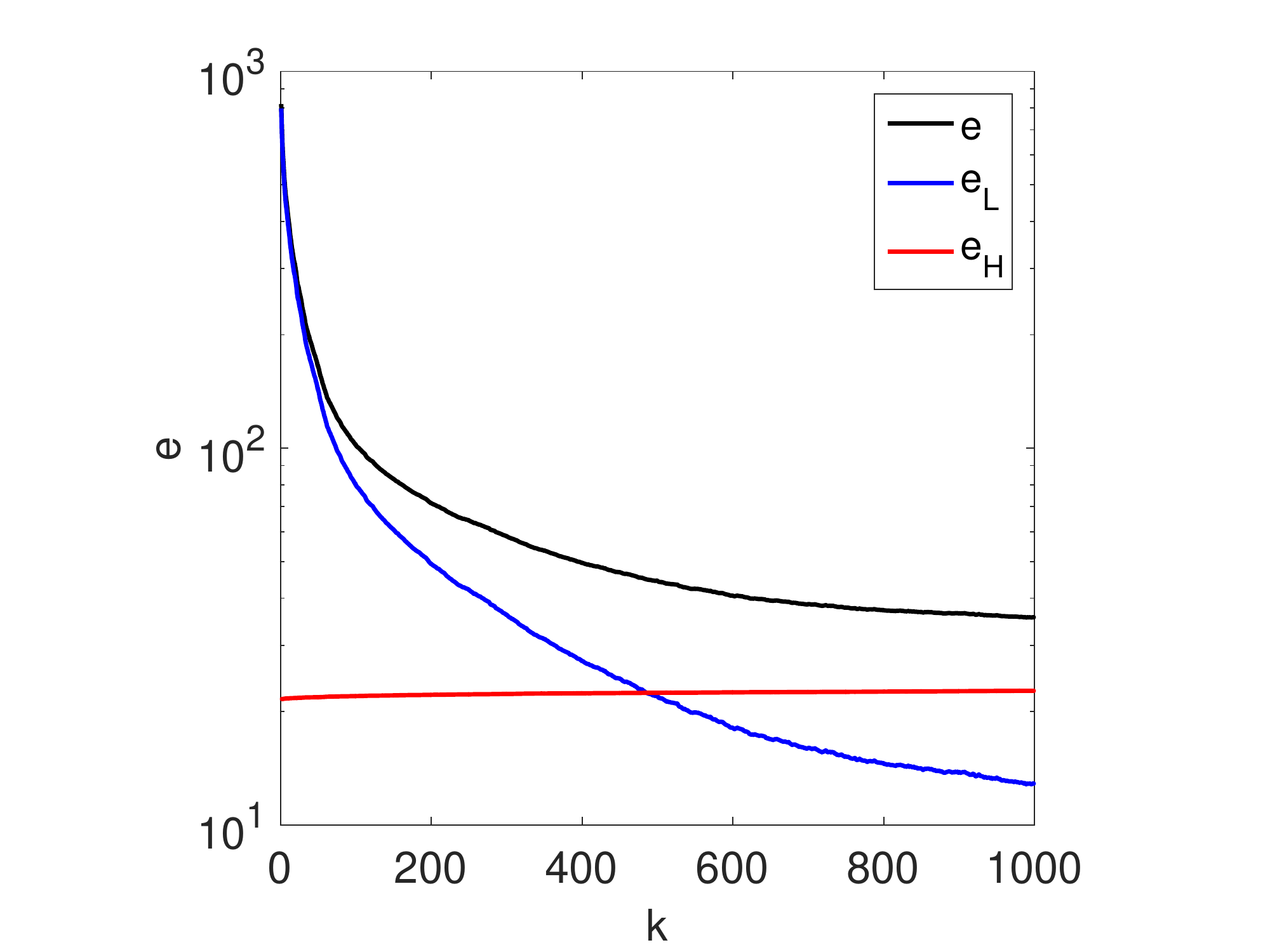}\\
      (a) \texttt{phillips} & (b) \texttt{gravity} & (c) \texttt{shaw}
  \end{tabular}
  \caption{The error decay for the examples with two noise levels: $\delta=10^{-2}$ (top) and $\delta = 5\times 10^{-2}$ (bottom), with a truncation level $L=5$.
  \label{fig:decom}}
\end{figure}

\subsection{Asymptotic convergence}

To examine the asymptotic convergence (with respect to the noise level $\delta$), in Table \ref{tab:err}, we present the smallest
error $e$ along the trajectory and the number of iterations to reach the error $e$ for several different
noise levels. It is observed that for all three examples, the minimal error $e$ increases steadily with
the noise level $\delta$, whereas also the required number of iterations decreases dramatically, which
qualitatively agrees well with Remark \ref{rmk:rate}. Thus, SGD is especially efficient in the regime
of high noise level, for which one or two epochs can already give very good approximations, due to the fast
preasymptotic convergence. This agrees with the common belief that SGD is most effective for finding an
approximate solution that is not highly accurate. At low noise levels, \texttt{shaw} takes far more iterations
to reach the smallest error. This might be attributed to the fact that the exponent $p$ in the source condition
\eqref{eqn:source} for \texttt{shaw} is much smaller than that for \texttt{phillips} or \texttt{gravity}, since
the low-frequency modes are less dominating, as roughly indicated by the red curves in Fig. \ref{fig:decom}. Interestingly, for
all examples, the error $e$ undergoes sudden change when the noise level $\delta$ increases from 1e-2 to 3e-2.
This might be related to the exponent $\alpha$ in the step size schedule, which probably should
be adapted to the noise level $\delta$ in order to achieve optimal balance between the computational efficiency
and statistical errors.

\begin{table}[hbt!]
\centering
\caption{The (minimal) expected error $e$ for the examples. \label{tab:err}}
\begin{tabular}{c|cccccc}
\hline
 $\delta$ & \texttt{phillips} & \texttt{gravity} & \texttt{shaw}\\
\hline
 1e-3 & (1.09e-3,7.92e4) & (3.22e-1,4.55e5) & (2.92e0,3.55e6)\\
 5e-3 & (3.23e-3,1.83e4) & (5.65e-1,6.19e4) & (3.21e0,1.95e6)\\
 1e-2 & (6.85e-3,3.09e3) & (6.21e-1,4.60e4) & (6.75e0,1.15e6)\\
 3e-2 & (4.74e-2,4.20e2) & (2.60e0, 6.50e3) & (3.50e1,7.80e3) \\
 5e-2 & (6.71e-2,1.09e3) & (6.32e0, 2.55e3) & (3.70e1,1.28e3)\\
\hline
\end{tabular}
\end{table}
\section{Concluding remarks}

In this work, we have analyzed the regularizing property of SGD for solving linear
inverse problems, by extending properly deterministic inversion theory. The study indicates that with proper
early stopping and suitable step size schedule, it is regularizing in the sense that iterates converge to
the exact solution in the mean squared norm as the noise level tends to zero. Further, under the canonical
source condition, we prove error estimates, which depend on the noise level and the schedule of step sizes.
Further we analyzed the preasymptotic convergence behavior of SGD, and proved that the
low-frequency error can decay much faster than high-frequency error. This allows explaining the fast initial
convergence of SGD typically observed in practice. The findings are complemented by extensive numerical experiments.

There are many interesting questions related to stochastic iteration algorithms that deserve further research. One outstanding issue
is stopping criterion, and rigorous yet computationally efficient criteria have to be developed. In practice, the performance of SGD
can be sensitive to the exponent $\alpha$ in the step size schedule \cite{NemirovskiJuditskyLan:2008}. Promising strategies for overcoming
the drawback include averaging \cite{PolyakJuditsky:1992} and variance reduction \cite{JohnsonZhang:2013}. It is
of much interest to analyze such schemes in the context of inverse problems, including nonlinear inverse
problems and penalized variants.

\bibliographystyle{abbrv}
\bibliography{sgd}

\appendix
\section{Elementary inequalities}

In this appendix, we collect some useful inequalities. We begin with an estimate on the operator
norm. This estimate is well known (see, e.g., \cite{LinRosasco:2017}).
\begin{lemma}\label{lem:kernel}
For $j<k$, and any symmetric and positive semidefinite matrix $S$ and step sizes $\eta_j\in (0,\|S\|^{-1}]$ and $p\geq 0$, there holds
\begin{equation*}
  \|\prod_{i=j}^k(I-\eta_iS)S^p\|\leq \frac{p^p}{e^p(\sum_{i=j}^k\eta_i)^p}.
\end{equation*}
\end{lemma}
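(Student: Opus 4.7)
The plan is to reduce the matrix inequality to a scalar maximization problem via the spectral theorem. Since $S$ is symmetric and positive semidefinite, I would diagonalize $S = Q \Lambda Q^t$ with eigenvalues $\lambda \in [0, \|S\|]$, and observe that the matrix $\prod_{i=j}^k (I-\eta_i S) S^p$ is simultaneously diagonalizable with eigenvalues $\lambda^p \prod_{i=j}^k (1-\eta_i \lambda)$. Thus the operator norm equals
\[
  \sup_{\lambda \in [0,\|S\|]} \lambda^p \prod_{i=j}^k (1-\eta_i \lambda).
\]

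First, I would use the assumption $\eta_i \in (0, \|S\|^{-1}]$ to ensure that each factor $1 - \eta_i \lambda$ lies in $[0,1]$, so the product is nonnegative (no sign issues when taking absolute values). Then I would invoke the elementary inequality $1-x \le e^{-x}$ for $x \ge 0$ factor-by-factor to obtain
\[
  \prod_{i=j}^k (1-\eta_i \lambda) \le \exp\Bigl(-\lambda \sum_{i=j}^k \eta_i\Bigr).
\]
Setting $T := \sum_{i=j}^k \eta_i$, the problem reduces to bounding $g(\lambda) := \lambda^p e^{-T\lambda}$ on $[0, \infty)$.

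Finally, I would carry out the single-variable calculus: $g'(\lambda) = \lambda^{p-1} e^{-T\lambda}(p - T\lambda)$ vanishes at $\lambda^* = p/T$, which is the global maximum on $[0,\infty)$, yielding
\[
  g(\lambda^*) = \Bigl(\tfrac{p}{T}\Bigr)^p e^{-p} = \frac{p^p}{e^p\bigl(\sum_{i=j}^k \eta_i\bigr)^p}.
\]
The case $p=0$ is handled by interpreting $0^0 = 1$ (or noting directly that the product of contractions is bounded by $1$); the case where $\lambda^* > \|S\|$ is automatic since we are maximizing over a subset of $[0,\infty)$ and the bound holds on the full half-line.

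There is no real obstacle here: the argument is entirely routine once the reduction to a scalar problem via simultaneous diagonalization is made. The only subtlety worth flagging in the write-up is that $\eta_i \le \|S\|^{-1}$ is used precisely to guarantee nonnegativity of each factor $1-\eta_i\lambda$ on the spectrum of $S$, which in turn allows the termwise application of $1-x \le e^{-x}$ without worrying about sign cancellations.
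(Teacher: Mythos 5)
Your argument is correct: the spectral reduction to $\sup_{\lambda\in[0,\|S\|]}\lambda^p\prod_{i=j}^k(1-\eta_i\lambda)$, the termwise use of $1-x\leq e^{-x}$ (justified by $\eta_i\lambda\leq 1$ on the spectrum), and the maximization of $\lambda^p e^{-T\lambda}$ at $\lambda^*=p/T$ give exactly the stated bound, with the $p=0$ case handled correctly. The paper does not prove this lemma but simply cites it as well known; your write-up is precisely the standard proof behind that citation, so there is nothing to correct or compare.
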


Next we derive basic estimates on finite sums involving $\eta_j=c_0j^{-\alpha}$, with $c_0>0$
and $\alpha\in[0,1)$.
\begin{lemma}\label{lem:basicest}
For the choice $\eta_j=c_0j^{-\alpha}$, $\alpha\in[0,1)$ and $r\in[0,1]$, for any $1\leq j<k$, there holds
\begin{align}
  \sum_{i=1}^k \eta_i & \geq (2^{1-\alpha}-1)(1-\alpha)^{-1}c_0 k^{1-\alpha},\label{eqn:basicest1}\\
   \sum_{j=1}^{k-1} \frac{\eta_j}{(\sum_{i=j+1}^k\eta_i)^{r}} & \leq\left\{\begin{array}{ll}
       c_0^{1-r}B(1-\alpha,1-r) k^{(1-r)(1-\alpha)}, & r\in [0,1),\\
       2^\alpha((1-\alpha)^{-1}+\ln k) & r=1,
     \end{array}\right.\label{eqn:basicest2}
\end{align}
where $B(\cdot,\cdot)$ is the Beta function defined by $B(a,b)=\int_0^1s^{a-1}(1-s)^{b-1}\d s$ for any $a,b>0$.
\end{lemma}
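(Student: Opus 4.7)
The overall approach is to replace both sums by integrals of $x^{-\alpha}$ and products thereof, using Riemann-sum comparisons that exploit the monotonicity of $x\mapsto x^{-\alpha}$, and then evaluate the integrals in closed form; the Beta function in \eqref{eqn:basicest2} is produced by the substitution $s=x/k$.

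For \eqref{eqn:basicest1}, since $x\mapsto x^{-\alpha}$ is decreasing on $(0,\infty)$, I would apply the Riemann lower bound $\sum_{i=1}^k i^{-\alpha}\ge\int_1^{k+1}x^{-\alpha}\,\d x=\frac{(k+1)^{1-\alpha}-1}{1-\alpha}$. Multiplying by $c_0$ reduces the claim to the elementary inequality $(k+1)^{1-\alpha}-1\ge (2^{1-\alpha}-1)\,k^{1-\alpha}$ for every integer $k\ge 1$, which I would verify by checking equality at $k=1$ and showing monotonicity of $g(k)=\bigl[(k+1)^{1-\alpha}-1\bigr]/k^{1-\alpha}$ through an explicit derivative calculation (the derivative simplifies to a manifestly nonnegative expression for $\alpha\in[0,1)$).

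For \eqref{eqn:basicest2} in the subcritical case $r\in[0,1)$, the plan is first to lower-bound each denominator and then to dominate the resulting discrete sum by an integral. Combining $\sum_{i=j+1}^k i^{-\alpha}\ge\int_{j+1}^{k+1}x^{-\alpha}\,\d x$ with the pointwise inequality $x^{-\alpha}\ge k^{-\alpha}$ on $[j,k]$ yields $\sum_{i=j+1}^k\eta_i\ge c_0 k^{-\alpha}(k-j)$; raising to the power $-r$ and pulling constants out gives
\[
 \sum_{j=1}^{k-1}\frac{\eta_j}{\bigl(\sum_{i=j+1}^k\eta_i\bigr)^r}\le c_0^{1-r}k^{r\alpha}\sum_{j=1}^{k-1}j^{-\alpha}(k-j)^{-r}.
\]
Since $x\mapsto x^{-\alpha}(k-x)^{-r}$ is unimodal with singularities at both endpoints, I would split the summation range at the unique critical point $x^\ast=\alpha k/(\alpha+r)$ and use monotonicity on each half to dominate the sum by $\int_0^k x^{-\alpha}(k-x)^{-r}\,\d x$. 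The substitution $s=x/k$ then identifies this integral with $k^{1-\alpha-r}B(1-\alpha,1-r)$, and multiplying by $k^{r\alpha}$ yields the stated exponent $(1-\alpha)(1-r)$.

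For the critical case $r=1$ the Beta integral diverges logarithmically, so I would split the outer sum at $j=[k/2]$. On $1\le j\le k/2$, the denominator $\sum_{i=j+1}^k\eta_i$ is bounded below by a constant multiple of $k^{1-\alpha}$ (by applying \eqref{eqn:basicest1} to the shifted tail starting at $i=[k/2]+1$), producing the bounded contribution $(1-\alpha)^{-1}$; on $k/2<j\le k-1$ the cruder bound $\sum_{i=j+1}^k\eta_i\ge c_0 k^{-\alpha}(k-j)$ together with $j^{-\alpha}\le(k/2)^{-\alpha}=2^\alpha k^{-\alpha}$ reduces each summand to $2^\alpha(k-j)^{-1}$, and the harmonic-like sum $\sum_{j=[k/2]+1}^{k-1}(k-j)^{-1}\le\ln k$ produces the $\ln k$ term with overall prefactor $2^\alpha$. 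The main obstacle is the bookkeeping of explicit constants $B(1-\alpha,1-r)$ and $2^\alpha,(1-\alpha)^{-1}$ through the singular regions and, in particular, justifying the sum-to-integral comparison against a non-monotone, doubly singular integrand via the splitting at $x^\ast$; once this is set up, the $s=x/k$ substitution and the $r=1$ dyadic split are routine.
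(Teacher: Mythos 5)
Your proposal is correct and follows essentially the same route as the paper's proof: the integral comparison together with the elementary inequality $(k+1)^{1-\alpha}-1\ge(2^{1-\alpha}-1)k^{1-\alpha}$ for \eqref{eqn:basicest1}, the lower bound $\sum_{i=j+1}^k\eta_i\ge c_0k^{-\alpha}(k-j)$ followed by comparison with the Beta integral $\int_0^k s^{-\alpha}(k-s)^{-r}\,\d s$ for $r\in[0,1)$, and the split at $j=[k/2]$ for $r=1$. The only differences are cosmetic: you spell out the sum-to-integral comparison for the doubly singular integrand (which the paper simply asserts) and you bound the first-half denominators in the $r=1$ case by the whole tail sum rather than via $(k-j)^{-1}\le 2k^{-1}$, both of which still deliver the stated constants (and note the bound $\sum_{i=j+1}^k\eta_i\ge c_0k^{-\alpha}(k-j)$ follows directly from $\eta_i\ge c_0k^{-\alpha}$ termwise, with no integral needed).
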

\begin{proof}
Since $\alpha\in[0,1)$, we have $c_0^{-1}\sum_{i=1}^k \eta_i \geq \int_{1}^{k+1} s^{-\alpha}
\d s = (1-\alpha)^{-1}((k+1)^{1-\alpha}-1)\geq (1-\alpha)^{-1}(2^{1-\alpha}-1)k^{1-\alpha}$.
This shows the estimate \eqref{eqn:basicest1}. Next, since $\eta_i\geq c_0k^{-\alpha}$, for any $i=j+1,\ldots,k$, we have
\begin{align}\label{eqn:est-sum}
  c_0^{-1}\sum_{i=j+1}^k \eta_i \ge k^{-\alpha}(k-j).
\end{align}
If $r\in [0,1)$, by changing variables and by the definition of the Beta function $B(\cdot,\cdot)$, we have
\begin{align*}
   &c_0^{r-1}\sum_{j=1}^{k-1} \frac{\eta_j}{(\sum_{i=j+1}^k\eta_i)^r}  \leq  k^{r\alpha}\sum_{j=1}^{k-1} j^{-\alpha}(k-j)^{-r}\\
   \leq& k^{r\alpha}\int_0^ks^{-\alpha}(k-s)^{-r}\d s = B(1-\alpha,1-r) k^{(1-r)(1-\alpha)}.
\end{align*}
For $r=1$, it can be derived directly
\begin{align*}
   \sum_{j=1}^{k-1} \frac{\eta_j}{\sum_{i=j+1}^k\eta_i}  
   \leq& k^{\alpha} \sum_{j=1}^{[\frac{k}{2}]} j^{-\alpha} (k-j)^{-1} + k^{\alpha} \sum_{j=[\frac{k}{2}]+1}^{k-1} j^{-\alpha} (k-j)^{-1}\\
   \leq &2k^{\alpha-1} \sum_{j=1}^{[\frac{k}{2}]}j^{-\alpha} + 2^{\alpha}\sum_{j=[\frac{k}{2}]+1}^{k-1} (k-j)^{-1}.
\end{align*}
Simple computation gives
$\sum_{j=[\frac{k}{2}]+1}^{k-1} (k-j)^{-1} \leq  \ln k$ and $
\sum_{j=1}^{[\frac{k}{2}]}j^{-\alpha} \leq (1-\alpha)^{-1}(\frac{k}{2})^{1-\alpha}.$
Combining the last two estimates yields the estimate \eqref{eqn:basicest2}.
\end{proof}

The next result gives some further estimates.
\begin{lemma}\label{lem:basicest2}
For $\eta_j = c_0j^{-\alpha}$, with $\alpha\in(0,1)$, $\beta\in[0,1]$, and $r\geq0$, there hold
\begin{align*}
  \sum_{j=1}^{[\frac{k}{2}]} \frac{\eta_j^2}{(\sum_{i=j+1}^k\eta_i)^r}j^{-\beta} &\leq c_{\alpha,\beta,r}k^{-r(1-\alpha)+\max(0,1-2\alpha-\beta)},\\
    \sum_{j=[\frac{k}{2}]+1}^{k-1} \frac{\eta_j^2}{(\sum_{i=j+1}^k\eta_i)^r}j^{-\beta} &\leq c_{\alpha,\beta,r}'k^{-((2-r)\alpha+\beta)+\max(0,1-r)},
\end{align*}
where we slightly abuse $ k^{-\max(0,0)}$ for $\ln k$, and the constants $c_{\alpha,\beta,r}$ and $c'_{\alpha,\beta,r}$ are given by
\begin{align*}
   c_{\alpha,\beta,r} & = c_0^{2-r} \left\{\begin{array}{ll}
      2^r(2\alpha+\beta-1)^{-1}, & 2\alpha +\beta>1,\\
      2, & 2\alpha +\beta = 1, \\
      2^{r-1+2\alpha+\beta}(1-2\alpha-\beta)^{-1}, & 2\alpha +\beta<1,
     \end{array}\right.\\
   c'_{\alpha,\beta,r} & = 2^{2\alpha+\beta}c_0^{2-r}\left\{\begin{array}{ll}
        (r-1)^{-1}, & r>1,\\
        1, & r = 1, \\
        2^{r-1}(1-r)^{-1}, &r <1.
      \end{array}\right.
\end{align*}
\end{lemma}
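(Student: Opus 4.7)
} The plan is to exploit the standard ``split at the midpoint'' trick: for $j\le[k/2]$ the tail $\sum_{i=j+1}^k\eta_i$ is essentially the \emph{full} sum, whereas for $j>[k/2]$ the index $j$ itself is already comparable to $k$, so the tail is best estimated by the short length $k-j$. In each regime the problem reduces to a one-parameter finite sum that is handled by integral comparison.

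For the first estimate, I would restrict attention to $j\le[k/2]$, so that $i$ ranges over at least half of $\{1,\ldots,k\}$. Using $\int_{j+1}^{k+1}s^{-\alpha}\,ds\ge c\,k^{1-\alpha}$ uniformly for such $j$ (essentially the argument for \eqref{eqn:basicest1} restricted to the tail), I obtain $\sum_{i=j+1}^k\eta_i\ge C\,c_0\,k^{1-\alpha}$. Substituting this lower bound into the denominator yields
\[
\sum_{j=1}^{[k/2]}\frac{\eta_j^2}{(\sum_{i=j+1}^k\eta_i)^r}j^{-\beta}\;\le\;C'\,c_0^{2-r}\,k^{-r(1-\alpha)}\sum_{j=1}^{[k/2]}j^{-2\alpha-\beta},
\]
and the residual sum is controlled by an integral estimate of $\int_1^{k/2}s^{-2\alpha-\beta}ds$, giving $O(1)$, $O(\ln k)$, or $O(k^{1-2\alpha-\beta})$ according to whether $2\alpha+\beta>1$, $=1$, or $<1$. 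Collecting the three cases produces exactly the factor $k^{\max(0,1-2\alpha-\beta)}$ and the stated constant $c_{\alpha,\beta,r}$.

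For the second estimate, I would use the cruder but location-aware bound $\sum_{i=j+1}^k\eta_i\ge c_0 k^{-\alpha}(k-j)$ from \eqref{eqn:est-sum}, which is sharp precisely for $j$ close to $k$. Since $j>[k/2]$ implies $j^{-\beta}\le 2^\beta k^{-\beta}$ and $\eta_j^2\le c_0^2 2^{2\alpha}k^{-2\alpha}$, one gets
\[
\sum_{j=[k/2]+1}^{k-1}\frac{\eta_j^2}{(\sum_{i=j+1}^k\eta_i)^r}j^{-\beta}\;\le\;2^{2\alpha+\beta}c_0^{2-r}\,k^{r\alpha-2\alpha-\beta}\sum_{\ell=1}^{[k/2]}\ell^{-r},
\]
after the change of variables $\ell=k-j$. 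The trailing sum is $O(1)$ if $r>1$, $O(\ln k)$ if $r=1$, and $O(k^{1-r})$ if $r<1$, matching the factor $k^{\max(0,1-r)}$ and the three constants listed for $c'_{\alpha,\beta,r}$.

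The argument itself is elementary; the only real work is bookkeeping of the boundary cases. The main obstacle is making sure the constants come out exactly as claimed, in particular isolating the powers of $2$ from the restrictions $j\le k/2$ and $k-j\le k/2$, and correctly handling the logarithmic boundary cases $2\alpha+\beta=1$ and $r=1$ under the convention $k^{-\max(0,0)}=\ln k$ stated in the lemma. Once the two regimes are separated and the $j$-sum is reduced to a monomial sum in each, the three-way case split is routine integral comparison.
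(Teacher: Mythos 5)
Your plan coincides with the paper's own argument: bound the denominator via $\sum_{i=j+1}^k\eta_i\geq c_0k^{-\alpha}(k-j)$ (which for $j\leq[k/2]$ gives the $c_0k^{1-\alpha}/2$ lower bound you describe), reduce each regime to a monomial sum in $j$ or in $\ell=k-j$, and finish by integral comparison with the three-way case split; the paper writes out only the first sum and treats the second exactly as you do. So the proposal is correct and essentially the same proof, up to the routine constant bookkeeping you already flag.
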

\begin{proof}
It follows from the inequality \eqref{eqn:est-sum} that
\begin{align*}
    &\quad c_0^{r-2}\sum_{j=1}^{[\frac{k}{2}]} \frac{\eta_j^2}{(\sum_{i=j+1}^k\eta_i)^r}j^{-\beta}
   = \sum_{j=1}^{[\frac{k}{2}]} \frac{j^{-(2\alpha+\beta)}}{(\sum_{i=j+1}^ki^{-\alpha})^r}\\
    & \leq k^{r\alpha}\sum_{j=1}^{[\frac{k}{2}]} j^{-(2\alpha+\beta)}(k-j)^{-r}
     \leq 2^rk^{-r+r\alpha}\sum_{j=1}^{[\frac{k}{2}]} j^{-(2\alpha+\beta)}\\
    & \leq 2^rk^{r\alpha-r}\left\{\begin{array}{ll}
        (2\alpha+\beta-1)^{-1}, & 2\alpha+\beta >1,\\
        \ln k, & 2\alpha+\beta = 1, \\
        (1-2\alpha-\beta)^{-1}(\frac{k}{2})^{1-2\alpha-\beta}, & 2\alpha +\beta<1.
      \end{array}\right.
\end{align*}
Collecting terms shows the first estimate. The second estimate follows similarly.
\end{proof}

Last, we give a technical lemma on recursive sequences.
\begin{lemma}\label{lem:iter-est}
Let $\eta_j=c_0j^{-\alpha}$, $\alpha\in(0,1)$. Given $\{b_j\}_{j=1}^\infty\subset\mathbb{R}_+$, $a_1\ge0$ and $c_i>0$,
$\{a_j\}_{j=2}^\infty \subset\mathbb{R}_+$ satisfies
\begin{equation*}
  a_{k+1} = c_1\sum_{j=1}^{k-1} \frac{\eta_j^2}{\sum_{i=j+1}^k\eta_i}a_j + c_2k^{-2\alpha}a_k + b_k.
\end{equation*}
If $b_j$ is nondecreasing, then for some $c(\alpha,c_i)$ dependent
of $\alpha$ and $c_i$, there holds
\begin{equation*}
a_{k+1} \leq c(\alpha,c_i)k^{-\min(\alpha,1-\alpha)}\ln k + 2b_k.
\end{equation*}
\end{lemma}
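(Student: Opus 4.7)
The plan is a two-stage bootstrapping argument. The first stage establishes a rough ``additive'' control of the form $a_k \le M_\ast + 2b_k$, where $M_\ast$ depends on $\alpha$, the $c_i$, and a finite amount of initial data. The second stage re-inserts this bound into the recursion and splits the convolution sum at $j=[k/2]$, applying the two halves of Lemma~\ref{lem:basicest2} to extract a decay rate $k^{-\min(\alpha,1-\alpha)}\ln k$ while leaving a clean $2b_k$ remainder.

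For the rough bound, set $M_k=\max_{j\le k}a_j$ and $S_k:=c_1\sum_{j=1}^{k-1}\eta_j^2/\!\sum_{i=j+1}^{k}\eta_i + c_2 k^{-2\alpha}$. By Lemma~\ref{lem:basicest2} with $\beta=0$, $r=1$ (summed over the two ranges $j\le[k/2]$ and $j>[k/2]$) together with the trivial bound on $c_2 k^{-2\alpha}$, one checks $S_k\lesssim k^{-\min(\alpha,1-\alpha)}\ln k \to 0$, so there exists $K_0=K_0(\alpha,c_i)$ with $S_k\le \tfrac12$ for $k\ge K_0$. Bounding each $a_j$ in the recursion by $M_k$ yields $a_{k+1}\le S_k M_k + b_k \le \tfrac12 M_k + b_k$ for $k\ge K_0$, whence $M_{k+1}\le\max(M_k,\tfrac12 M_k + b_k)$; the only ``dangerous'' branch gives $M_{k+1}\le 2b_k$, while the other gives $M_{k+1}\le M_k$. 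Iterating and using monotonicity of $b_k$ produces $M_k\le M_\ast + 2b_k$ for all $k$, with $M_\ast$ absorbing $a_1,\ldots,a_{K_0}$ and $b_1,\ldots,b_{K_0}$.

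Feeding $a_j\le M_\ast+2b_j\le M_\ast+2b_k$ back into the recursion and splitting the sum, the early piece $\sum_{j\le[k/2]}$ is bounded by the first inequality of Lemma~\ref{lem:basicest2}, giving a factor $k^{-(1-\alpha)+\max(0,1-2\alpha)}=k^{-\min(\alpha,1-\alpha)}$ (with an extra $\ln k$ at the boundary $\alpha=\tfrac12$); the late piece $\sum_{j>[k/2]}$ is bounded by the second inequality with factor $k^{-\alpha}\ln k$; and $c_2 k^{-2\alpha}a_k$ is trivially $\lesssim k^{-2\alpha}(M_\ast+2b_k)$. Collecting, $a_{k+1}\le (M_\ast+2b_k)\,D k^{-\min(\alpha,1-\alpha)}\ln k + b_k$ for some $D=D(\alpha,c_i)$. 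For $k$ large enough that $2D k^{-\min(\alpha,1-\alpha)}\ln k\le 1$, the $b_k$-containing part of the product is absorbed into a second copy of $b_k$, yielding the announced $a_{k+1}\le DM_\ast\,k^{-\min(\alpha,1-\alpha)}\ln k + 2b_k$; the finitely many small indices are absorbed by enlarging the constant.

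The main obstacle is the first stage: a priori nothing stops the running maximum $M_k$ from growing, and the decay $S_k\to 0$ holds only \emph{asymptotically}, so one must identify the transition index $K_0$ after which the recursion becomes contractive on $M_k$ and then thread the monotonicity of $b_k$ through the max to close the bootstrap. Once that is in hand, the second stage is essentially a computation from Lemma~\ref{lem:basicest2}, with the only subtlety being the $\alpha=\tfrac12$ boundary, where the marginal logarithm in the first-half kernel estimate is absorbed by the $\ln k$ already present in the final bound.
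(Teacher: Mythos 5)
Your proposal is correct and follows essentially the same route as the paper's proof: a first inductive (bootstrap) stage yielding the uniform bound $a_k\le a_*+2b_k$ once the kernel sum $S_k$ drops below $\tfrac12$ (via Lemma \ref{lem:basicest2} with $\beta=0$, $r=1$), followed by reinserting this bound to extract the rate $k^{-\min(\alpha,1-\alpha)}\ln k$ while absorbing the noise term into $2b_k$. Your running-maximum formulation of the induction and the explicit split at $[k/2]$ are only cosmetic variations on the paper's argument.
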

\begin{proof}
Let $c_\alpha = c(\alpha,0,1)+c'(\alpha,0,1)$ from Lemma \ref{lem:basicest2}.
Take $k_*\in\mathbb{N}$ such that $c_1c_\alpha k^{-\min(1-\alpha,\alpha)}\ln k+c_2k^{-2\alpha}<1/2$
for any $k\geq k_*$. The existence of a finite $k_*$ is due to the monotonicity of
$f(t)= t^{-\min(1-\alpha,\alpha)}\ln t$ for large $t>0$ and $\lim_{t\to\infty}f(t)=0$.
Now we claim that there exists $a_{*}>0$ such that $a_k\leq a_{*} + 2b_k$ for any $k\in\mathbb{N}.$
Let $a_{*}=\max_{1\leq k\leq k_{*}} a_k$. The claim is trivial for $k\leq k_{*}$. Suppose it holds
for some $k\geq k_{*}$. Then by Lemma \ref{lem:basicest2} and the monotonicity of $b_j$,
\begin{align*}
   a_{k+1}& \leq \max_{1\leq i \leq k} a_i \Big(c_1 \sum_{j=1}^{k-1} \frac{\eta_j^2}{\sum_{i=j+1}^k\eta_i} +c_2k^{-2\alpha}\Big) + b_k\\
           & \leq (a_{*}+2b_k)(c_1c_\alpha k^{-\min(\alpha,1-\alpha)}\ln k + c_2k^{-2\alpha}) + b_k\\
           & \leq \tfrac{1}{2}(a_{*}+2b_{k}) + b_k \leq a_{*}+2b_{k+1},
\end{align*}
This shows the claim by mathematical induction. Next, by
Lemma \ref{lem:basicest2}, for any $k> k_{*}$, we have
\begin{align*}
  a_{k+1} & \leq (a_{*}+2b_k)(c_1c_{\alpha}k^{-\min(\alpha,1-\alpha)}\ln k + c_2k^{-2\alpha}) + b_k\\
           &\leq c(\alpha,c_i) k^{-\min(\alpha,1-\alpha)}\ln k + 2b_k.
\end{align*}
This completes the proof of the lemma.
\end{proof}

\begin{remark}
By the argument in Lemma \ref{lem:iter-est} and a standard bootstrapping argument, we deduce the following
assertions. If $\sup_jb_j<\infty$, then $\{a_j\}_{j=1}^\infty$ is bounded by a constant dependent of $\alpha$,
$\sup_jb_j$ and $c_i$s. Further, if $b_j\leq c_3j^{-\gamma}$, $j\in\mathbb{N}$ with $\gamma>0$, then for some
$c(\alpha,\gamma,c_i,\ell)$ dependent of $\alpha$, $\gamma$, $\ell$ and $c_i$s, there holds
\begin{equation*}
a_{k+1} \leq c(\alpha,\gamma,c_i,\ell)k^{-\min(\ell\alpha,1-\alpha,\gamma)}\ln^\ell k.
\end{equation*}
\end{remark}
\end{document}